 \newcommand{\rdens}[1]{\nabla_{{#1}}}
 \newcommand{\trdens}[1]{\widetilde{\nabla}_{{#1}}}
 \newcommand{\irdens}[1]{\accentset{\propto}{\nabla}_{#1}}
\newcommand{\shtm}{\,\widetilde{\triangledown}\,}
\newcommand{\shm}{\,{\triangledown}\,}
\newcommand{\shim}{\,\accentset{\propto}{\triangledown}\,}
\newcommand{\bbbn}{\mathbb{N}}
\definecolor{shadecolor}{gray}{.85}%
\definecolor{tintedcolor}{gray}{.80}%
\definecolor{mytintedcolor}{gray}{.95}%
\newdimen\svparindent
\newcounter{tmpthm}
\newenvironment{mytinted}{%
  \MakeFramed {\FrameRestore}}%
{\endMakeFramed}
{\endlist\end{mytinted}\egroup}
\newlength{\graphshift}
\newcommand{\nlongrightarrow}{\relbar\!\joinrel\not\relbar\joinrel\!\!\rightarrow}
\newcommand{\duality}[3][1cm]{\setlength{\graphshift}{#1}\raisebox{-.4\graphshift}{\includegraphics[height=\graphshift]{#2}}\quad\nlongrightarrow\quad G\qquad\iff\qquad  G\quad\longrightarrow \quad\raisebox{-.4\graphshift}{\includegraphics[height=\graphshift]{#3}}}
\newtheorem{theorem}{Theorem}
\newtheorem{corollary}{Corollary}
\newtheorem{lemma}[theorem]{Lemma}
\theoremstyle{definition}
\newtheorem{definition}[theorem]{Definition}
\newtheorem{example}[theorem]{Example}
\theoremstyle{remark}
\newtheorem{problem}{Problem}
\begin{document}

\title{On Low Tree-Depth Decompositions}
\author{Jaroslav Ne{\v s}et{\v r}il}
\address{Jaroslav Ne{\v s}et{\v r}il\\
Computer Science Institute of Charles University (IUUK and ITI)\\
   Malostransk\' e n\' am.25, 11800 Praha 1, Czech Republic}
\email{nesetril@iuuk.mff.cuni.cz}
\thanks{Supported by grant ERCCZ LL-1201
and CE-ITI P202/12/G061, and by the European Associated Laboratory ``Structures in
Combinatorics'' (LEA STRUCO)}

\author{Patrice Ossona~de~Mendez}
\address{Patrice~Ossona~de~Mendez\\
Centre d'Analyse et de Math\'ematiques Sociales (CNRS, UMR 8557)\\
  190-198 avenue de France, 75013 Paris, France
	--- and ---
Computer Science Institute of Charles University (IUUK)\\
   Malostransk\' e n\' am.25, 11800 Praha 1, Czech Republic}
\email{pom@ehess.fr}
\thanks{Supported by grant ERCCZ LL-1201 and by the European Associated Laboratory ``Structures in
Combinatorics'' (LEA STRUCO), and partially supported by ANR project Stint under reference ANR-13-BS02-0007}

\date{\today}
\begin{abstract}
The theory of sparse structures usually uses tree like structures as building blocks. In the context of sparse/dense dichotomy this role is played by graphs with bounded tree depth. In this paper we survey results related to this concept and particularly explain how these graphs are used to decompose and construct more complex graphs and structures. In more technical terms we survey some of the properties and applications of low tree depth decomposition of graphs.
\end{abstract}
\maketitle
\section{Tree-Depth}
The {\em tree-depth} of a graph is a minor montone graph
invariant that has been defined in \cite{Taxi_tdepth}, and which is equivalent or
similar to  the {\em rank
function} (used for the analysis of countable graphs, see e.g.\  \cite{Nev2003}),
the {\em vertex ranking number} \cite{vertex_ranking,Schaffer}, and the minimum height of an elimination tree \cite{Bodlaender1995}.
Tree-depth can also be seen as an analog for undirected graphs of the cycle
rank defined by Eggan \cite{Eggan1963}, which is  a parameter relating
digraph complexity to other areas such as regular language complexity and
asymmetric matrix factorization.
The notion of tree-depth found a wide range of applications, from the study of non-repetitive coloring \cite{Thue_choos} to the proof of the homomorphism preservation theorem for finite structures \cite{Rossman2007}. Recall the definition of tree-depth:

\begin{definition}
The {\em tree-depth} ${\rm td}(G)$ of a graph $G$ is
defined as
the minimum height\footnote{Here the height is defined as the maximum number of vertices in a chain from a root to a leaf} of a rooted forest $Y$ such that $G$ is a subgraph of the closure of $Y$ (that is of the graph obtained by adding edges between a vertex and all its ancestors). In particular, the tree-depth of a disconnected graph is the maximum of the tree-depths of its connected components.
\end{definition}

Several characterizations of tree-depth have been given, which can be seen as possible alternative definitions. Let us mention:

\begin{trivlist}
 \setlength{\itemsep}{3mm}
\item {\bf TD$1$.} The tree-depth of a graph is the order of the largest clique in a trivially perfect supergraph of $G$ \cite{td-wiki}. Recall that a graph is {\em trivially perfect} if it has the property that in each of its induced subgraphs the size of the maximum independent set equals the number of maximal cliques \cite{Golumbic1978105}. This characterization follows directly from the property that a connected graph is trivially perfect if and only if it is the comparability graph of a rooted tree \cite{Golumbic1978105}.

\item {\bf TD$2$.} The tree-depth of a graph is the minimum number of colors in a {\em centered coloring} of $G$, that is in a vertex coloring of $G$ such that in every connected subgraph of $G$ some color appears exactly once \cite{Taxi_tdepth}.
\item {\bf TD$3$.}
A strongly related notion is vertex ranking, which has has been investigated in \cite{vertex_ranking,Schaffer}.
The {\em vertex ranking} (or {\em ordered coloring}) of a graph is a vertex coloring by a linearly ordered set of colors such that for every path in the graph with end vertices of the same color there is a vertex on this path with a higher color. The equality of the minimum number 
of colors in a vertex ranking and the tree-detph is proved in \cite{Taxi_tdepth}.

\item {\bf TD$4$.} The tree-depth of a graph $G$ with connected components $G_1,\dots,G_p$, is recursively defined by:
$$
{\rm td}(G)=\begin{cases}
1&\text{ if }G\simeq K_1\\
\displaystyle\max_{i=1}^p {\rm td}(G_i)&\text{ if $G$ is disconnected}\\
\displaystyle 1+\min_{v\in V(G)}{\rm td}(G-v)&\text{ if $G$ is connected and }G\not\simeq K_1
\end{cases}
$$
The equivalence between the value given by this recursive definition and minimum height of an elimination tree, as well as the equality of this value with the tree-depth are proved in \cite{Taxi_tdepth}.
\item {\bf TD$5$.} The tree-depth can also be defined by means of games, see \cite{Giannopoulou2011, Gruber2008,Hunter2011}.
In particular, this leads to a min-max formula for tree-depth in the spirit of the min-max formula relating tree-width and bramble size \cite{Seymour1993}. Precisely, a {\em shelter} in a graph $G$ is a family $\mathcal S$ of non-empty connected
subgraphs of $G$ partially ordered by
inclusion such that for every subgraph $H\in\mathcal S$ not minimal in $\mathcal F$ and for every $x\in H$ there exists $H'\in\mathcal S$ covered by $H$ (in the partial order) such that $x\not\in H'$.
The {\em thickness} of a shelter $\mathcal S$ is the minimal length of a maximal chain of $\mathcal S$. Then  the tree-depth of a graph $G$ equals the maximum thickness of a shelter in $G$ \cite{Giannopoulou2011}.
\item {\bf TD$6$.} Also, graphs with tree-depth at most $t$ can be theoretically characterized by means of a finite set of forbidden minors, subgraphs, or even induced subgraphs. But in each case, the number of obstruction grows at least like a double (and at most a triple) exponential in $t$ \cite{Dvorak2012969}.
\end{trivlist}

More generally, classes with bounded tree-depth can be characterized by several properties:

\begin{trivlist}
 \setlength{\itemsep}{3mm}
\item {\bf TD$7$.} A class of graphs $\mathcal C$ has bounded tree-depth if and only if there is some integer $k$ such that graphs in $\mathcal C$ exclude  $P_k$ as a subgraph.
More precisely, while computing the tree-depth of a graph $G$ is a hard problem, it can be (very roughly) approximated bu considering the height $h$ of a Depth-First Search tree of $G$, as $\lceil\log_2 (h+2)\rceil\leq {\rm td}(G)\leq h$ \cite{Sparsity}.
\item {\bf TD$8$.} A class of graphs $\mathcal C$ has bounded tree-depth if and only if there is some integers $s,t,q$ such that graphs in
$\mathcal C$ exclude $P_s, K_t,$ and $K_{q,q}$ as induced subgraphs (this follows from the previous item and \cite[Theorem 3]{Atminas2014}, which states that for every $s$, $t$, and $q$, there is a number $Z = Z(s, t, q)$ such that every graph with a path of length at least $Z$ contains either $P_s$ or $K_t$ or $K_{q,q}$ as an induced subgraph.
\item {\bf TD$9$.}
A monotone class of graphs has bounded tree-depth if and only if
it is  well quasi-ordered for the induced-subgraph relation (with vertices possibly colored using $k\geq 2$ colors)  (follows from
\cite{Ding1992}).
\item {\bf TD$10$.}
A monotone class of graphs has bounded tree-depth if and only if
First-order logic (FO) and monadic second-order (MSO) logic have the same expressive power on the class \cite{6280445}.
\end{trivlist}

Classes of graphs with tree-depth at most $t$ are computationally very simple, as witnessed by the following properties:

\begin{trivlist}
 \setlength{\itemsep}{3mm}
\item It follows from {\bf TD$9$} that  every hereditary property can be tested in polynomial time when restricted to graphs with tree-depth at most $t$.
Let us emphasize how one can combine {\bf TD$8$} and {\bf TD$9$}
to get complexity results for $P_s$-free graphs. Recall that a graph $G$ is {\em $k$-choosable}  if for every assignment of a set $S(v)$ of $k$ colors to every vertex $v$ of $G$, there is a proper coloring of $G$ that assigns to each vertex $v$ a color from $S(v)$ \cite{vizing,ert}. Note that in general, for $k>2$, deciding $k$-choosability for bipartite graphs is $\Pi_2^P$-complete, hence more difficult that both NP and co-NP problems. It was proved in \cite{Heggernes2009} that for $P_5$-free graphs, that is, graphs excluding  $P_5$ as an induced subgraph, k-choosability is fixed-parameter tractable. For general $P_s$-free graphs we prove:
\begin{theorem}
For every integers $s$ and  $k$, there is a polynomial time algorithm to decide whether a $P_s$-free graph $G$  is $k$-choosable.
\end{theorem}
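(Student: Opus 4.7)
The plan is to combine the two characterizations \textbf{TD$8$} and \textbf{TD$9$} with standard obstructions to $k$-choosability. First, $k$-choosability is a hereditary property: given an induced subgraph $H$ of $G$ and a list-assignment $L$ of size $k$ on $H$, one extends $L$ to $G$ by attaching arbitrary $k$-element lists to the remaining vertices, and any proper $L$-coloring of $G$ restricts to one of $H$. Second, $K_{k+1}$ is not $k$-choosable (it is not even $k$-colorable), and by the classical theorem of Erd\H{o}s, Rubin and Taylor the biclique $K_{q,q}$ has choice number $\Theta(\log q)$, so there exists a smallest $q=q(k)$ for which $K_{q,q}$ is not $k$-choosable. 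Moreover, mere subgraph (not necessarily induced) containment of $K_{k+1}$ or $K_{q(k),q(k)}$ already forces non-$k$-choosability of $G$: if such a subgraph is found on vertex sets $A$ and $B$, then $G[A\cup B]$ is an edge-supergraph of the obstruction, and any list assignment witnessing non-choosability of the obstruction extends, via arbitrary $k$-lists outside, to one witnessing non-choosability of $G$.

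Given a $P_s$-free input graph $G$, the algorithm first brute-forces, in times $O(n^{k+1})$ and $O(n^{2q(k)})$ respectively, whether $G$ contains $K_{k+1}$ or $K_{q(k),q(k)}$ as a subgraph. If so, it outputs ``not $k$-choosable''. Otherwise $G$ excludes $P_s$, $K_{k+1}$ and $K_{q(k),q(k)}$ as induced subgraphs, and \textbf{TD$8$} guarantees that $\mathrm{td}(G)\leq t$ for some $t=t(s,k)$.

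On the class $\mathcal{C}_t$ of graphs with tree-depth at most $t$, \textbf{TD$9$} provides well-quasi-ordering under the induced-subgraph relation. Since $k$-choosability is hereditary and tree-depth is monotone under taking induced subgraphs, the set of non-$k$-choosable members of $\mathcal{C}_t$ admits a finite family $\{H_1,\dots,H_m\}$ of induced-subgraph-minimal obstructions. Consequently, $G$ is $k$-choosable if and only if it contains none of the $H_i$ as an induced subgraph, which can be tested in time $O(n^h)$ with $h=\max_i|V(H_i)|$, a polynomial for each fixed $s,k$.

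The main caveat, rather than a mathematical obstacle, is that the algorithm so obtained is non-uniform: the obstructions $H_i$ are guaranteed to exist by the well-quasi-ordering argument but are not computed explicitly as functions of $s$ and $k$. This parallels the general remark already made in the paper that \textbf{TD$9$} yields polynomial-time tests for arbitrary hereditary properties on graphs of bounded tree-depth, and it suffices for the statement, which only asserts existence of a polynomial-time algorithm for each fixed $s$ and $k$.
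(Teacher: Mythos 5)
Your proposal is correct and follows essentially the same route as the paper: test for a complete and a complete bipartite obstruction (the paper uses $K_{k+1}$ and $K_{k,k^k}$, you use $K_{k+1}$ and the balanced biclique $K_{q(k),q(k)}$ from the Erd\H{o}s--Rubin--Taylor bound), invoke \textbf{TD$8$} to bound the tree-depth, and then use the \textbf{TD$9$}-based polynomial-time test for the hereditary property of $k$-choosability on graphs of bounded tree-depth. Your write-up merely makes explicit two points the paper leaves implicit, namely the finite-obstruction argument behind the last step and the resulting non-uniformity of the algorithm.
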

\begin{proof}
Assume $G$ is $P_s$-free. We can decide in polynomial time whether $G$ includes $K_{k+1}$ or $K_{k,k^k}$ as an induced subgraph.
In the affirmative, $G$ is not $k$-choosable. Otherwise, the tree-depth of $G$ is bounded by some constant $C(s,k)$. As the property to be $k$-choosable is hereditary, we can use a polynomial time algorithm  deciding whether a graph with tree-depth at most $C(s,k)$
is $k$-choosable
\end{proof}

\item Graphs with tree-depth at most $t$ have a (homomorphism) core of order bounded by a function of $t$ \cite{Taxi_tdepth}. In other word, every graph $G$ with tree-depth at most $t$ has an induced subgraph $H$ of order at most $F(t)$ such that there exists an adjacency preserving map (that is: a {\em homomorphism}) from $V(G)$ to $V(H)$.
\item The complexity of checking the satisfaction of an ${\rm MSO}_2$ property $\phi$ on a class with tree-depth at most $t$ in time $O(f(\phi, t)\cdot|G|)$, where $f$ has an elementary dependence on $\phi$ \cite{Gajarsky2012}. This is in contrast with the dependence arising for ${\rm MSO}_2$-model checking in classes with bounded treewidth using Courcelle's algorithm \cite{Courcelle2}, where $f$ involves a tower of exponents of height growing with $\phi$ (what is generally unavoidable \cite{Frick2004}). These properties led to the study of classes with bounded shrub-depth, generalizing classes with bounded tree-depth, and enjoying similar properties for ${\rm MSO}_1$-logic
\cite{Gajarsky2012,Ganian2012}. Concerning the dependency on the tree-depth $t$, note that the
$(t + 1)$-fold exponential algorithm for MSO model-checking given by
Gajarsk{\'y} and Hlin{\v e}n\'y  in \cite{gajarsky2012faster} is essentially optimal \cite{Lampis2013}.
\end{trivlist}

Graphs with bounded tree depth form the building blocs for more complicated graphs, with which we deal in the next section.
\section{Low Tree-Depth Decomposition of Graphs}
Several extensions of chromatic number of been proposed and studied in the literature. For instance, the {\em acyclic chromatic number} is the minimum number of colors in a proper vertex-coloring such that any two colors induce an acyclic graph (see e.g. \cite{AMS,Borodin1979}). More generally, for a fixed parameter $p$, one can ask what is the minimum number of colors in a proper vertex-coloring of a graph $G$, such that any subset $I$ of at most $p$ colors induce a subgraph with treewidth at most $|I|-1$. In this setting, the value obtained for $p=1$ is the chromatic number, while the value obtained for $p=2$ is the acyclic chromatic number.

In this setting, the following result  has been proved by Devos, Oporowski, Sanders, Reed, Seymour and Vertigan using the structure theorem for graphs excluding a minor:

\begin{theorem}[\cite{2tw}]
\label{th:2tw}For every proper minor closed class $\mathcal K$  and
integer $k\geq 1$, there is an integer $N=N({\mathcal K},k)$,
such that every graph $G \in
{\mathcal K}$ has a vertex partition into $N$ graphs such that
any $j\leq k$ parts form a graph with tree-width at most $j-1$.
\end{theorem}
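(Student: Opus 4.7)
The plan is to invoke the Robertson--Seymour graph minor structure theorem. Since $\mathcal K$ is a proper minor closed class, there is some graph $H$ that is a forbidden minor for $\mathcal K$. The structure theorem then furnishes, for every $G\in\mathcal K$, a tree decomposition of $G$ of bounded adhesion whose torsos are $h$-almost embeddable in a surface of Euler genus at most $g$, where $h$ and $g$ depend only on $H$. The task thus splits into an outer problem on the tree decomposition and an inner problem on each almost-embeddable torso.

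For the outer step I would color the bags of the tree decomposition so that adjacent bags receive disjoint color palettes; combined with the inner colorings produced next, this ensures that colors of vertices in different torsos do not interfere. For the inner step on a torso I would treat its constantly many apex vertices by assigning each a private color, deal with its bounded-depth vortices using their bounded path-width, and handle the residual surface-embedded part by a breadth-first search (BFS) layering argument. The key geometric fact exploited is that the union of any $j$ consecutive BFS layers of a graph embedded in a surface of genus at most $g$ has tree-width bounded by a function of $j$ and $g$; applying a modular coloring to the BFS layers (color $i$ receives all layers $\equiv i \pmod N$ for a suitably large $N$) then yields a partition in which the union of any $j$ color classes has bounded tree-width.

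The main obstacle is to sharpen this bound from ``tree-width at most $f(j)$'' down to the exact value $j-1$. For this I would iterate: given a rough partition where the union of any $j$ classes has tree-width at most $f(j)$, refine each class via induction on $k$, using the fact that inside a graph already of bounded tree-width one can further subdivide along an optimal width-$f(j)$ tree decomposition so as to obtain the sharp bound. The base case $k=1$ is just the (bounded) chromatic number, and the inductive step $k\to k+1$ processes one extra ``layer'' of tree-width by introducing a constant factor more colors. Ensuring that these refinements remain consistent along adhesion sets shared by different torsos, so that the outer coloring really transports the local guarantees to the global graph, is the delicate combinatorial heart of the argument.
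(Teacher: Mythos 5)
The survey does not actually prove Theorem~\ref{th:2tw}; it quotes it from DeVos, Ding, Oporowski, Sanders, Reed, Seymour and Vertigan, and your roadmap (graph minor structure theorem, BFS layering on the surface part, apexes, vortices, clique-sums) is indeed the strategy of that original paper. As written, however, your sketch has genuine gaps. The most immediate is the outer step: you propose to ``color the bags of the tree decomposition so that adjacent bags receive disjoint color palettes,'' but adjacent bags of a tree decomposition intersect in their adhesion set, so a vertex lying in both cannot draw its color from two disjoint palettes. The actual argument must assign each adhesion vertex to one side, treat the boundedly many vertices a torso inherits from its neighbours as extra apexes, and then prove a composition lemma showing that the desired partition property is preserved under clique-sums of bounded order while the finitely many apex colors are reused across unboundedly many torsos without two adjacent apexes colliding. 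This is exactly the step you label ``the delicate combinatorial heart'' and then do not supply; it is not a detail but the bulk of the cited proof.

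The second gap is the sharpening from ``tree-width at most $f(j)$'' to the exact value $j-1$. The clean mechanism is a product coloring: if any $k$ classes of a first coloring induce a graph of tree-width at most $w$, and if every graph of tree-width at most $w$ admits a partition into $M(w,k)$ classes any $j\le k$ of which induce tree-width at most $j-1$, then refining the first coloring by the second on each $k$-subset of its classes yields the theorem. But the base case here --- the sharp statement for graphs of bounded tree-width --- is itself a nontrivial lemma (provable by induction on a tree decomposition, or via low tree-depth colorings of bounded tree-width graphs together with the inequality ${\rm tw}\leq{\rm td}-1$), and you invoke it as a ``fact'' without argument; your description of the induction as costing ``a constant factor more colors'' per level does not reflect how the refinement actually composes. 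A smaller point worth making explicit: $j$ color classes of the modular BFS coloring are not $j$ consecutive layers; one needs the observation that, for $N>j$, they split into runs of at most $j$ consecutive layers with no edges between distinct runs, so that the tree-width bound for consecutive layers applies to each run separately.
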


The stronger concept of low tree-depth decomposition has been introduced by the authors in \cite{Taxi_tdepth}.

\begin{definition}
A {\em low tree-depth decomposition} with parameter $p$ of a graph $G$ is a coloring of the vertices of $G$, such that any subset $I$ of at most $p$ colors induce a subgraph with tree-depth at least $|I|$. The minimum number of colors in a low tree-depth decomposition with parameter $p$ of $G$ is denoted by $\chi_p(G)$.
\end{definition}

For instance, $\chi_1(G)$ is the (standard) chromatic number of $G$, while $\chi_2(G)$ is the {\em star chromatic number} of $G$, that is the minimum number of colors in a proper vertex-coloring of $G$ such that any two colors induce a star forest (see e.g. \cite{alon2,Taxi_jcolor}).

The authors were able to extend Theorem~\ref{th:2tw} to low tree-depth decomposition in \cite{Taxi_tdepth}.
Then, using the concept of transitive fraternal augmentation \cite{POMNI}, the authors extended further existence of low tree-depth decomposition (with bounded number of colors) to classes with bounded expansion, the definition of which we recall now:
\begin{definition}
A class $\mathcal C$ has {\em bounded expansion} if there exists a function $f:\bbbn\rightarrow\bbbn$ such that every topological minor $H$ of a graph $G\in\mathcal{C}$ has an average degree bounded by $f(p)$, where $p$ is the maximum number of subdivisions per edge needed to turn $H$ into a subgraph of $G$.
\end{definition}

Extending low tree-depth decomposition to classes with bounded expansion in  is the best possible:

\begin{theorem}[\cite{POMNI}]
\label{thm:chiBE}
Let $\mathcal{C}$ be a class of graphs, then the following are equivalent:
\begin{enumerate}
\item for every integer $p$ it holds $\sup_{G\in\mathcal{C}}\chi_p(G)<\infty$;

\item the class $\mathcal{C}$ has bounded expansion.
\end{enumerate}
\end{theorem}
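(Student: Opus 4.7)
The plan is to treat the two implications by quite different techniques.

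For $(1)\Rightarrow(2)$, a direct pigeonhole argument on the color patterns of short paths should do. Fix $p$ and consider any topological minor $H$ of $G\in\mathcal C$ with at most $p$ subdivisions per edge, so each edge $e$ of $H$ lifts to an internally disjoint path $P_e$ in $G$ on at most $p+2$ vertices. Take a low tree-depth decomposition of $G$ with parameter $q=p+2$ using $N=\chi_{p+2}(G)$ colors; $N$ is bounded in terms of $p$ only by hypothesis. For each $e\in E(H)$ let $C_e\subseteq[N]$ be the set of colors appearing on $P_e$ (endpoints included), so $|C_e|\leq p+2$. By pigeonhole, some set $C$ with $|C|\leq p+2$ satisfies $C_e=C$ for at least $|E(H)|/\binom{N}{p+2}$ edges; call this subset $E'\subseteq E(H)$. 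The subgraph $G_C$ induced in $G$ by the vertices of color in $C$ has tree-depth at most $|C|\leq p+2$ by the decomposition property, so degeneracy at most $p+1$. The spanning subgraph $(V(H),E')$ of $H$ is a topological minor of $G_C$, and because tree-depth is topological-minor-monotone, it also has tree-depth at most $p+2$ and degeneracy at most $p+1$, yielding $|E'|\leq(p+1)|V(H)|$. Combining gives $|E(H)|\leq(p+1)\binom{N}{p+2}|V(H)|$, a bound on the average degree of $H$ depending only on $p$ and $\mathcal C$; hence $\mathcal C$ has bounded expansion.

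For $(2)\Rightarrow(1)$, I would follow \cite{POMNI} and produce the decomposition via iterated \emph{transitive fraternal augmentation}. Starting from an orientation $\vec G_0$ of $G$ with bounded in-degree (which exists because bounded expansion implies bounded degeneracy), I build a chain $\vec G_0\subseteq\vec G_1\subseteq\cdots\subseteq\vec G_{p-1}$, where $\vec G_{i+1}$ is obtained from $\vec G_i$ by enforcing the transitive condition ($u\to v\to w$ implies $u\to w$) and the fraternal condition ($u\to v\leftarrow w$ forces one of $u\to w$ or $w\to u$ to be present), re-orienting the new arcs so that in-degree stays bounded. A proper coloring $c$ of the underlying undirected graph of $\vec G_{p-1}$ using $O(\Delta^{-}(\vec G_{p-1}))$ colors is then shown to be a low tree-depth decomposition of $G$ with parameter $p$: for any set $I$ of at most $p$ colors, one constructs an elimination order on $G_I$ of depth $|I|$ by a suitable greedy rule guided by $\vec G_{p-1}$; the transitive and fraternal properties ensure that the resulting elimination tree captures $G_I$, giving $\mathrm{td}(G_I)\leq|I|$.

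The main obstacle is the first half of the second direction, namely showing that the iterated augmentation preserves bounded expansion, so that the in-degree of $\vec G_{p-1}$ is bounded by a function of $p$ and $\mathcal C$ only. Each arc created at step $i+1$ corresponds either to a directed path of length two or to a common in-neighbour pair in $\vec G_i$, so it witnesses an edge of a shallow topological minor of $G$ at a depth that roughly doubles with $i$; iterating this interpretation, every arc of $\vec G_{p-1}$ corresponds to an edge of a topological minor of $G$ at depth $O(2^p)$, and the bounded expansion hypothesis at that depth bounds the density, hence the degeneracy, of $\vec G_{p-1}$. Once this density bound is in place, the coloring step, the correctness of the greedy elimination order, and the verification that $c$ certifies $\chi_p(G)$ small are comparatively routine.
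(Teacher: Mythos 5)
Your proposal is correct and takes essentially the same route as the source: the survey itself does not prove Theorem~\ref{thm:chiBE} but attributes it to \cite{POMNI}, whose argument for $(2)\Rightarrow(1)$ is exactly the iterated transitive fraternal augmentation you outline (with the new arcs interpreted as edges of shallow topological minors at depth growing roughly like $2^p$, cf.\ the bound in Theorem~\ref{thm:chibound}), while your pigeonhole-on-colour-sets argument for $(1)\Rightarrow(2)$ is the standard reduction from the density of bounded-depth topological minors to $\chi_{p+2}$. The only nits are cosmetic: the number of colour sets to pigeonhole over is $\sum_{i\le p+2}\binom{N}{i}$ rather than $\binom{N}{p+2}$, and the precise number of augmentation steps required for parameter $p$ is a detail of \cite{POMNI} rather than $p-1$ on the nose.
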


Properties and characterizations of classes with bounded expansion will be discussed in more details in Section~\ref{sec:taxonomy} (we refer the reader to \cite{Sparsity} for a thorough analysis). Let us mention that classes with bounded expansion in particular include proper minor closed classes (as for instance planar graphs or graphs embeddable on some fixed surface), classes with bounded degree, and more generally classes excluding a topological minor. Thus on the one side the classes of graphs with bounded expansion include most of the sparse classes of structural graph theory, yet on the other side they have pleasant algorithmic and extremal properties.

On the other hand, one could ask whether for proper minor-closed classes one could ask  there exists a stronger coloring than the  one given by low tree-depth decompositions. Precisely, one can ask what is
 the minimum number of colors required for a vertex coloring of a graph $G$, so that any subgraph $H$ of $G$ gets at least $f (H)$ colors. (For instance that the star coloring corresponds to the graph function where any $P_4$ gets at least $3$ colors.)
Define the {\em upper chromatic number} $\overline{\chi}(H)$ of a graph $H$ as the greatest integer, such that for any proper minor closed class of graph $\mathcal C$, there exists a constant $N=N(\mathcal C, H)$, such that any graph $G\in\mathcal C$ has a vertex coloring by at $N$ colors so that any subgraph of $G$ isomorphic to $H$ gets at least $\overline{\chi}(H)$ colors.
The authors proved in \cite{Taxi_tdepth} that $\overline{\chi}(H)={\rm td}(H)$, showing that low tree-depth decomposition is the best we can achieve for proper minor closed classes. Note that the tree-depth of a graph $G$ is also related to the chromatic numbers $\chi_p(G)$ by ${\rm td}(G)=\max_p \chi_p(G)$ \cite{Taxi_tdepth}.

\section{Low Tree-Depth Decomposition and Restricted Dualities}

 The original motivation of low tree-depth decomposition was to prove the existence
of a triangle free graph $H$ such that every triangle-free planar $G$ admits a homomorphism to $H$, thus providing a structural strengthening of Gr\" otzsch's theorem \cite{Taxi_jcolor}.
Recall that a {\em homomorphism} of a graph $G$ to a graph $H$ is a mapping from the vertex set $V(G)$ of $G$ to the vertex set $V(H)$ of $H$ that preserves adjacency.
The existence (resp. non-existence) of a homomorphism of $G$ to $H$ will be denoted by $G\rightarrow H$ (resp. by $G\nrightarrow H$).
We refer the interested reader to the monograph \cite{HN} for a detailed study of graph homomorphisms.

Thus the above planar triangle-free problem can be restated as follows: Prove that there exists
a graph $H$ such that $K_3\nrightarrow H$ and such that for every planar graph $G$ it holds
$$
K_3\nrightarrow G\quad\iff\quad G\rightarrow H.
$$
More generally, we are interested in the following problem: given a class of graphs $\mathcal C$ and a connected graph $F$, find a graph $D_{\mathcal{C}}(F)$ for $\mathcal{C}$ (which we shall refer to as a {\em dual} of $F$ for $\mathcal C$), such that $F\nrightarrow D_{\mathcal{C}}(F)$ and such that for every $G\in\mathcal C$ it holds
$$
F\nrightarrow G\quad\iff\quad G\rightarrow D_{\mathcal{C}}(F).
$$
(Note that $D_{\mathcal{C}}(F)$ is not uniquely determined by the above equivalence.)
A couple $(F,D_{\mathcal{C}}(F))$ with the above property is called
a {\em restricted duality} of $\mathcal C$.

\begin{example}
For the special case of triangle-free planar graphs,  the existence of a dual was proved by the authors in \cite{Taxi_tdepth} and
the minimum order dual has been proved to be the Clebsch graph
by Naserasr \cite{Nas}.

$\forall\text{ planar }G:$
$$\duality[15mm]{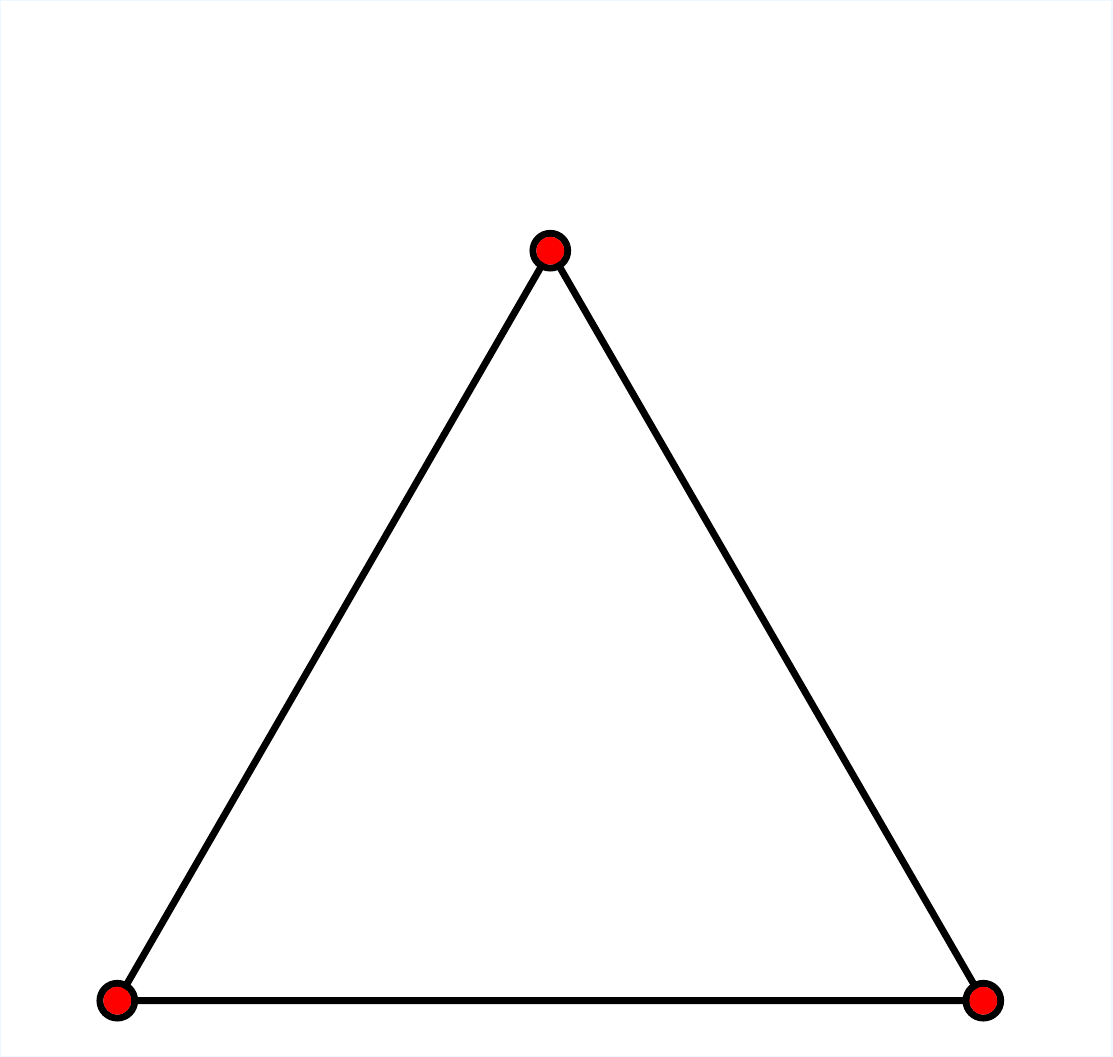}{Clebsch}.$$
Note that this restricted homomorphism duality extends to the class of all graphs excluding  $K_5$ as a minor \cite{Naserasr20095789}.
\end{example}

\begin{example}
A restricted homomorphism duality for toroidal graphs follows from the existence of a finite set of obstructions for $5$-coloring
 proved by Thomassen in \cite{Thomassen199411}: Noticing that all the obstructions shown Fig.~\ref{fig:6crit}   are homomorphic images of one of them, namely $C_1^3$.

\begin{figure}[h!]
\begin{center}
\includegraphics[width=.75\textwidth]{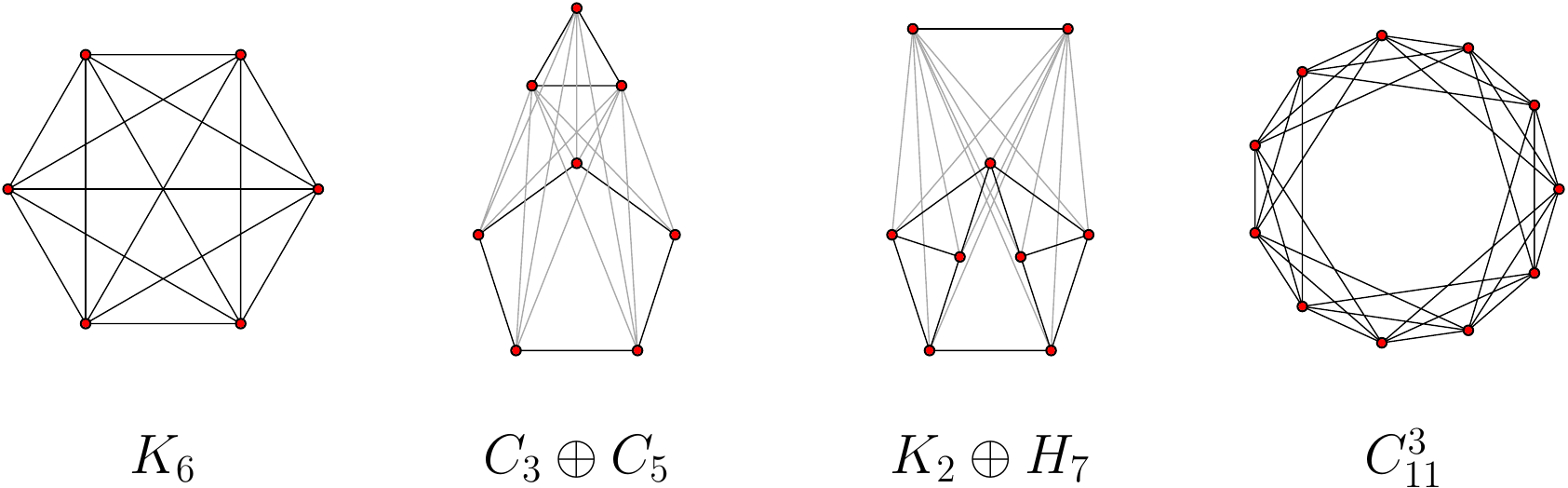}
\end{center}
\caption{The $6$-critical graphs for the torus.}
\label{fig:6crit}
\end{figure}

Thus we get the following restricted homomorphism duality.

$\forall\text{ toroidal }G:$
$$\duality[15mm]{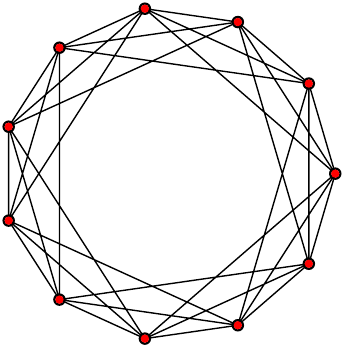}{K5}$$
\end{example}

\begin{definition}
 A class $\mathcal C$ with the property that every connected graph $F$ has a dual for $\mathcal C$ is said to have {\em all restricted dualities}.
 \end{definition}

In \cite{Taxi_tdepth} we proved, using low tree-depth decomposition, that for  every proper minor closed class $\mathcal C$ has all restricted dualities. We generalized in \cite{POMNIII} this result to classes with bounded expansions. We briefly outline this.

In the study of restricted homomorphism dualities, a main tool appeared to be  notion of $t$-approximation:

\begin{definition}
Let $G$ be a graph and let $t$ be a positive integer. A graph $H$ is
a {\em $t$-approximation} of $G$ if $G$ is  homomorphic to
$H$ (i.e. $G\rightarrow H$) and every subgraph of $H$ of order at most $t$ is homomorphic to $G$.
\end{definition}

Indeed the following theorem is proved in \cite{FO_CSP}:
\begin{theorem}
\label{thm:dual_approx}
Let $\mathcal C$ be a class of graphs. Then the following are equivalent:
\begin{enumerate}
\item The class $\mathcal C$ is bounded and has all restricted dualities (i.e. every connected graph $F$ has a dual for $\mathcal C$);
\item For every integer $t$ there is a constant $N(t)$ such that every graph $G\in\mathcal{C}$ has a $t$-approximation of order at most $N(t)$.
\end{enumerate}
\end{theorem}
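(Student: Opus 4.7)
The plan is to establish both directions by explicit constructions, using the categorical product of graphs to build $t$-approximations from duals in one direction, and using disjoint unions of bounded-order graphs to build both the bound and the duals in the other.

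For the implication $(1) \Rightarrow (2)$, I would fix $t$ and enumerate the finite set $\mathcal{F}_t = \{F_1, \ldots, F_m\}$ of connected graphs of order at most $t$, up to isomorphism. Let $D_0$ be a bound of $\mathcal{C}$ and, for each $i$, let $D_i$ be a dual of $F_i$ for $\mathcal{C}$, both provided by hypothesis. For each $G \in \mathcal{C}$, set $I(G) = \{i : F_i \not\to G\}$ and define $H_G = D_0 \times \prod_{i \in I(G)} D_i$, where the product is the categorical (tensor) product of graphs. Since $G \to D_0$ and the dual property gives $G \to D_i$ for every $i \in I(G)$, the universal property yields $G \to H_G$. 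To verify that $H_G$ is a $t$-approximation of $G$, I would take any subgraph $S \subseteq H_G$ of order at most $t$ and consider each connected component $C$ of $S$; since $C$ is connected of order at most $t$, we have $C \cong F_j$ for some $j$. If $j \in I(G)$, then composing the inclusion $C \hookrightarrow H_G$ with the projection $H_G \to D_j$ would produce a homomorphism $F_j \to D_j$, contradicting the defining property of the dual. Hence $j \notin I(G)$, so $F_j \to G$ and therefore $C \to G$; applying this to every connected component of $S$ yields $S \to G$. The order $|H_G| \leq |D_0| \cdot \prod_{i=1}^m |D_i|$ depends only on $t$ and $\mathcal{C}$, giving the desired uniform bound $N(t)$.

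For the converse $(2) \Rightarrow (1)$, boundedness follows almost immediately from the case $t = 1$: condition~(2) places every $G \in \mathcal{C}$ in homomorphism to some graph of order at most $N(1)$, so the disjoint union of all graphs of order at most $N(1)$ is a single finite graph that dominates $\mathcal{C}$. For the restricted dualities, given a connected graph $F$ of order $t$, I would take $D_F$ to be the disjoint union of all graphs of order at most $N(t)$ that do not admit $F$ as a homomorphic image. Since $F$ is connected, $F \to D_F$ would force $F$ to map entirely into a single component, which is forbidden by construction, so $F \not\to D_F$. If $G \in \mathcal{C}$ and $F \not\to G$, then the $t$-approximation $H_G$ of $G$ must also satisfy $F \not\to H_G$: otherwise the homomorphic image of $F$ would be a subgraph of $H_G$ of order at most $t$, which by the approximation property would map back to $G$, contradicting $F \not\to G$. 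Therefore $H_G$ appears as a summand of $D_F$, whence $G \to H_G \to D_F$. Conversely, if $G \to D_F$ and $F \to G$, composition would give $F \to D_F$, which is impossible.

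The main obstacle is the forward direction, specifically ensuring that the order of $H_G$ is bounded uniformly over $G \in \mathcal{C}$. The key point is that although the index set $I(G)$ depends on $G$, it is always drawn from the fixed finite set $\{1, \ldots, m\}$, so every $H_G$ arises as a partial subproduct of a single \emph{universal} graph $D_0 \times D_1 \times \cdots \times D_m$. The inclusion of $D_0$ as a factor serves to uniformly cover the corner case $I(G) = \emptyset$, where the product over $I(G)$ alone would be empty and might fail to receive $G$.
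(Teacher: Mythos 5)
Your proof is correct, and I can only note that the paper itself does not prove Theorem~\ref{thm:dual_approx} --- it is quoted from \cite{FO_CSP} without proof. Your argument (categorical products of the bound with the duals of the non-mapping connected graphs of order at most $t$ for one direction; disjoint unions of the order-$\leq N(t)$ graphs avoiding $F$ as a homomorphic image for the other) is the standard one from that reference, and both directions check out; the only caveat is the usual degenerate convention issue around $F=K_1$ and empty graphs, which does not affect the substance.
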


The following lemma stresses the connection existing between $t$-approximation and low tree-depth decomposition:

\begin{theorem}[\cite{FO_CSP}]
\label{thm:chi2approx}
For every integer $t$ there exists a constant $C_t$  such that every
graph $G$ has a $t$-approximation $H$ with order
$$
|H|\leq C_t^{\chi_t(G)^t}.
$$
\end{theorem}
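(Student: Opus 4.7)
The plan is to realise $H$ as a quotient of $G$ by a type refinement derived from the low tree-depth decomposition, so that the homomorphism $G\to H$ is built in, while the backward homomorphisms from small subgraphs of $H$ are supplied by the core retractions of bounded tree-depth graphs.

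I would first fix a $\chi_t$-coloring $c\colon V(G)\to [N]$ with $N=\chi_t(G)$. For every $t$-subset $S\subseteq [N]$ the induced subgraph $G_S=G[c^{-1}(S)]$ has tree-depth at most $t$, and since (as noted earlier in this survey) graphs of tree-depth at most $t$ have a homomorphism core of order bounded by some $f(t)$, each $G_S$ retracts to an induced core $K_S\subseteq G_S$ with $|K_S|\leq f(t)$, via a retraction $r_S\colon G_S\to K_S$. I would then assign to each vertex $v$ the type
$$\tau(v)=\bigl(c(v),\,(r_S(v))_{S\in\binom{[N]}{t},\,c(v)\in S}\bigr),$$
so that the number of realised types is at most $N\cdot f(t)^{\binom{N-1}{t-1}}$, which is of the form $C_t^{N^t}$ for a constant $C_t$ depending only on $t$. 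Let $H$ be the graph whose vertices are the realised types and whose edges are those pairs $\tau_1\tau_2$ witnessed by some edge $uv\in E(G)$ with $\{\tau(u),\tau(v)\}=\{\tau_1,\tau_2\}$. By construction, $v\mapsto\tau(v)$ is a homomorphism $G\to H$.

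The crux is to check that every subgraph $H'\subseteq H$ with $k\leq t$ vertices admits a homomorphism to $G$. Writing $V(H')=\{\tau_1,\dots,\tau_k\}$, I would pick a $t$-subset $S\subseteq[N]$ containing all colors $c(\tau_i)$ attached to these types (if $N<t$, the whole graph $G$ has tree-depth at most $t$ and one can take $H$ to be its core directly), and set $\phi(\tau_i)=r_S(v_i)$ for any representative $v_i$ of $\tau_i$. This is well-defined because $r_S(v)$ appears as a coordinate of $\tau(v)$ whenever $c(v)\in S$, and hence depends only on $\tau(v)$. If $\tau_i\tau_j\in E(H')$, then some witnessing edge $uv\in E(G)$ satisfies $\tau(u)=\tau_i$ and $\tau(v)=\tau_j$; both endpoints lie in $G_S$, and $r_S$ being a graph homomorphism yields $\phi(\tau_i)\phi(\tau_j)=r_S(u)r_S(v)\in E(K_S)\subseteq E(G)$.

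The main subtlety I expect is the bookkeeping ensuring that $\phi$ is well-defined and unaffected by any padding of $S$; both issues are forced by encoding $r_S(v)$ into $\tau(v)$ for exactly the $t$-subsets $S$ that may later be needed. The size estimate is then routine, since $\binom{N-1}{t-1}\leq N^{t}$, and choosing $C_t$ slightly larger than $f(t)$ absorbs both the leading factor $N$ and the loss between $N^{t-1}$ and $N^t$.
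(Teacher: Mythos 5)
Your proof is correct and follows essentially the approach of the cited source \cite{FO_CSP} (the survey states this theorem without reproducing a proof): combine the $\chi_t$-decomposition with the bounded cores of bounded tree-depth graphs, and quotient $G$ by a type that records all the retraction images, so that the backward homomorphism from any $\leq t$ vertices of $H$ factors through a single retraction $r_S$. The only point left implicit is that $H$ is loopless --- adjacent vertices receive distinct types because a low tree-depth decomposition is in particular a proper coloring (the case $|I|=1$) --- and your count $N\cdot f(t)^{\binom{N-1}{t-1}}$ is indeed at most $C_t^{\chi_t(G)^t}$.
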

Hence we have the following corollary of Theorems~\ref{thm:dual_approx}, \ref{thm:chi2approx}, and~\ref{thm:chiBE}, which was originally proved in \cite{POMNIII}:
\begin{corollary}
Every class with bounded expansion has all restricted dualities.
\end{corollary}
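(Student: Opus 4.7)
The plan is to chain together the three results just stated. Fix a class $\mathcal{C}$ with bounded expansion and a positive integer $t$. Applying the implication (2)$\Rightarrow$(1) of Theorem~\ref{thm:chiBE}, we immediately obtain that $B_t := \sup_{G\in\mathcal{C}}\chi_t(G)$ is finite; this is the only place where the bounded-expansion hypothesis enters, and it converts the qualitative structural assumption on $\mathcal{C}$ into the quantitative bound on $\chi_t$ that the next step requires.

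Feeding this bound into Theorem~\ref{thm:chi2approx}, each graph $G\in\mathcal{C}$ admits a $t$-approximation $H_G$ of order at most
$$
|H_G|\;\leq\;C_t^{\chi_t(G)^t}\;\leq\;C_t^{B_t^{\,t}}.
$$
The right-hand side depends only on $t$ and on $\mathcal{C}$, not on the specific graph $G$. Setting $N(t):=C_t^{B_t^{\,t}}$ therefore verifies condition~(2) of Theorem~\ref{thm:dual_approx} for the class $\mathcal{C}$. Invoking the implication (2)$\Rightarrow$(1) of that theorem, we conclude that $\mathcal{C}$ is bounded and that every connected graph $F$ has a dual for $\mathcal{C}$, which is exactly the statement that $\mathcal{C}$ has all restricted dualities.

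There is essentially no obstacle here: the corollary is a direct three-step composition, with all the genuinely hard work already absorbed into Theorems~\ref{thm:chiBE}, \ref{thm:chi2approx}, and~\ref{thm:dual_approx}. The only mild subtlety worth flagging is that Theorem~\ref{thm:dual_approx} packages the clause \emph{bounded} together with \emph{has all restricted dualities} inside its condition~(1); but since the quantitative $t$-approximation bound appearing in condition~(2) is self-contained and uniform over $\mathcal{C}$, a single application of the equivalence delivers both properties simultaneously, yielding the desired conclusion.
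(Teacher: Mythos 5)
Your proposal is correct and is precisely the argument the paper intends: the corollary is stated as a direct consequence of chaining Theorem~\ref{thm:chiBE} (bounded expansion gives uniformly bounded $\chi_t$), Theorem~\ref{thm:chi2approx} (bounded $\chi_t$ gives uniformly bounded $t$-approximations), and the implication (2)$\Rightarrow$(1) of Theorem~\ref{thm:dual_approx}. Your remark that condition~(1) of Theorem~\ref{thm:dual_approx} also delivers boundedness of the class is a harmless bonus, not an obstacle.
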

The connection between classes with bounded expansion and restricted dualities appears to be even stronger, as witnessed by the following (partial) characterization theorem.
\begin{theorem}[\cite{FO_CSP}]
Let $\mathcal{C}$ be a topologically closed class of graphs (that is a class closed by the operation of graph subdivision).
Then the following are equivalent:
\begin{enumerate}
\item the class $\mathcal{C}$ has all restricted dualities;
\item the class $\mathcal{C}$ has bounded expansion.
\end{enumerate}
\end{theorem}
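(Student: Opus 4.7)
The direction (2)$\Rightarrow$(1) is exactly the preceding corollary, which holds for arbitrary classes of graphs and does not invoke the topological closure hypothesis.

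For (1)$\Rightarrow$(2), my plan is to argue by contrapositive: assume $\mathcal{C}$ is topologically closed but fails to have bounded expansion, and derive a contradiction with the existence of all restricted dualities. Failure of bounded expansion gives some $r$ with $\nabla_r(\mathcal{C})=\infty$, and via a Bollob\'as--Thomason-style bound one then finds, for every $n$, a graph $G\in\mathcal{C}$ containing $K_n$ as a topological minor at depth bounded by some $r'=r'(r,n)$; equivalently, a subdivision of $K_n$ with each edge replaced by a path of bounded length occurs as a subgraph of $G$. Topological closure is now decisive: further subdividing $G$ keeps us inside $\mathcal{C}$, so for every edge-length profile above a fixed threshold the corresponding subdivision of $K_n$ sits as a subgraph of some member of $\mathcal{C}$. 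To transmute this combinatorial reservoir into a homomorphism-level obstruction I would couple it with the $t$-approximation viewpoint of Theorems~\ref{thm:dual_approx} and~\ref{thm:chi2approx}: all restricted dualities (together with the boundedness that a topologically closed class easily forces through restricted dualities for small forbidden graphs) would give a uniform bound $N(t)$ on the order of a $t$-approximation of any graph in $\mathcal{C}$. Choosing $n\gg N(t)$ and selecting the subdivision lengths along a topological $K_n$ in a well-chosen $G\in\mathcal{C}$ as pairwise distinct large odd integers (legitimate precisely by topological closure), the homomorphism $G\to H$ in a $t$-approximation with $|H|\le N(t)$ is forced to collapse the $n$ branch vertices of the clique, while the local-richness clause of a $t$-approximation (every $\le t$-vertex subgraph of $H$ is homomorphic to $G$) prevents such a collapse, yielding the contradiction.

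The main obstacle is the last step: controlling how the $K_n$ topological minor inside $G$ propagates through the homomorphism $G\to H$. Since homomorphisms do not preserve topological minors in general, the rescue must come from an odd-girth / winding-parity argument exploiting the tuned subdivision lengths: by picking the lengths along the branch-edges of the topological $K_n$ to be large odd integers with pairwise distinct residues modulo a small modulus, one arranges that no nontrivial identification of branch vertices is consistent with an edge-preserving map into a graph of order $\le N(t)$. Making this parity-transfer step rigorous, and in a form compatible with the $t$-approximation bound, is the technical core of the proof, and it is precisely where the topological closure hypothesis is used in full strength, since the parities must be chosen independently on each branch edge of $K_n$ while remaining inside $\mathcal{C}$.
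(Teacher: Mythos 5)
Your direction (2)$\Rightarrow$(1) is fine: it is precisely the Corollary derived from Theorems~\ref{thm:dual_approx}, \ref{thm:chi2approx} and~\ref{thm:chiBE}, and indeed needs no topological closure. (The paper itself only cites \cite{FO_CSP} for the present theorem and contains no proof, so your converse direction can only be judged on its own terms.) That converse direction has a genuine gap at its very first step. From the failure of bounded expansion you claim to extract, for every $n$, a topological $K_n$ at depth bounded by some $r'(r,n)$ in a member of $\mathcal{C}$. This is false: containing all cliques as bounded-depth topological minors is the defining property of a \emph{somewhere dense} class (see Section~\ref{sec:taxonomy} and Theorem~\ref{thm:tri}), which is strictly stronger than failing to have bounded expansion. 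Bollob\'as--Thomason produces a topological clique from large average degree, but with no control whatsoever on the lengths of the subdivision paths, so the depth cannot be bounded in terms of $r$ and $n$. Concretely, take graphs $G_n$ of girth greater than $n$ and minimum degree at least $n$, and close this family under subdivision: the resulting class is topologically closed, has unbounded $\rdens{0}$ and hence no bounded expansion, yet contains no depth-$p$ topological $K_3$ (let alone $K_n$) once the girth exceeds $3(2p+1)$. Your argument says nothing about such classes, although the theorem must refute all restricted dualities for them.

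The endgame is also not yet a proof, and you say so yourself: the ``parity-transfer'' step is exactly the content of the implication and is left open. Moreover the intended contradiction --- the homomorphism $G\to H$ with $|H|\le N(t)$ must identify branch vertices of the topological clique, while the local-richness clause of a $t$-approximation forbids this --- is not convincing as stated: identifying branch vertices is not in itself inconsistent with anything, and the clause that every subgraph of $H$ of order at most $t$ maps back to $G$ constrains the \emph{odd girth} of $H$ (a short odd cycle of $H$ would have to map into $G$), not the injectivity of $G\to H$ on any prescribed vertex set. A workable route along the lines you gesture at is rather: use the duals $D_k$ of the odd cycles $C_{2k+1}$ (which must have odd girth exceeding $2k+1$), subdivide the edges of a suitable $G\in\mathcal{C}$ so that the paths representing the edges of a dense shallow topological minor $H_d$ acquire controlled odd lengths while the whole graph acquires large odd girth, and read off from the forced homomorphism into $D_k$ a proper colouring of $H_d$ with at most $|D_k|$ colours; combined with Lemma~\ref{lem:degchr} and Theorem~\ref{thm:BE} this bounds $\trdens{r}$ on $\mathcal{C}$. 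Whatever route you take, you must replace ``bounded-depth topological $K_n$'' by ``bounded-depth subdivision of a graph of large average degree'' throughout, and carry out the parity bookkeeping explicitly; as written, the proposal does not prove (1)$\Rightarrow$(2).
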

This theorem has also a variant in the context of directed graphs:
\begin{theorem}[\cite{FO_CSP}]
Let $\mathcal{C}$ be a class of directed graphs closed by reorientation.
Then the following are equivalent:
\begin{enumerate}
\item the class $\mathcal{C}$ has all restricted dualities;
\item the class $\mathcal{C}$ has bounded expansion.
\end{enumerate}
\end{theorem}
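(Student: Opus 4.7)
My plan is to treat the two implications separately, transferring the undirected machinery across the underlying graph map on one side, and exploiting reorientation closure as a substitute for topological closure on the other.

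For the implication (2)$\Rightarrow$(1), I would mimic the chain of arguments used for the undirected case. Bounded expansion of a class of digraphs is an invariant of the class of underlying undirected graphs, so Theorem~\ref{thm:chiBE} supplies, for every $t$, a uniform bound on $\chi_t$ of the underlying graphs of members of $\mathcal C$. The next step is to formulate and verify directed analogs of Theorem~\ref{thm:dual_approx} and Theorem~\ref{thm:chi2approx}: the notion of $t$-approximation for a digraph $G$ is the evident one (a digraph $H$ with $G\to H$ such that every sub-digraph of $H$ of order at most $t$ is homomorphic to $G$), and the construction used for Theorem~\ref{thm:chi2approx} goes through almost verbatim once one fixes a low tree-depth decomposition of the underlying graph, because the $t$-approximation can be assembled by listing all digraphs of order at most $t$ that occur in unions of up to $t$ color classes, a quantity still controlled by a function of $\chi_t$. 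Chaining these ingredients yields all restricted dualities for $\mathcal C$.

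For the implication (1)$\Rightarrow$(2), I would argue the contrapositive. Assume the class of underlying graphs of $\mathcal C$ fails to have bounded expansion; my aim is to produce a connected digraph $F$ for which no dual can exist. Here reorientation closure plays the structural role that topological closure plays in the preceding theorem: given a family of arbitrarily dense shallow minors in the underlying graphs, reorientation closure allows one to prescribe the arc directions on these dense subgraphs freely, so that the constraints ``$F\nrightarrow G$'' and ``$G\rightarrow D$'' can be pitted against each other. For every candidate dual $D$ of order $N$, I would then construct a digraph $G_N\in\mathcal C$ admitting no homomorphism to $D$ yet avoiding $F$ homomorphically, thereby contradicting the assumed duality. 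The undirected topologically closed theorem supplies the template, with local subdivisions replaced by local reorientations.

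The principal obstacle I anticipate lies precisely in the choice of the pattern $F$ for the backward direction. Unlike long subdivisions, which provide an unbounded supply of new graphs, a given digraph has only finitely many reorientations; so $F$ must be chosen delicately enough that the simultaneous constraints imposed on $D_{\mathcal C}(F)$ by all reorientations of dense digraphs in $\mathcal C$ force $|D_{\mathcal C}(F)|$ to grow without bound. Natural first candidates are an oriented path or a transitive tournament, since for these patterns homomorphisms into any fixed target can be analyzed through a combinatorial pigeonhole on orientations of shallow minors, and the resulting counting should yield the desired contradiction.
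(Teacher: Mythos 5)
This survey states the theorem with a citation to \cite{FO_CSP} and gives no proof, so there is no in-paper argument to compare yours against; I can only measure your sketch against the argument in the cited source. Your forward direction (2)$\Rightarrow$(1) is essentially right: one does prove directed analogues of Theorems~\ref{thm:dual_approx} and~\ref{thm:chi2approx}, driven by a low tree-depth decomposition of the underlying graphs supplied by Theorem~\ref{thm:chiBE}, and the construction of a small $t$-approximation does go through for digraphs. The substance of the theorem, however, is the converse, and there your proposal has a genuine gap.

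The gap is twofold. First, your candidate patterns cannot work. By the Gallai--Roy theorem the directed path $\vec{P}_{k}$ has a finite dual valid for \emph{all} digraphs, namely the transitive tournament $T_{k-1}$ (since $\vec{P}_{k}\nrightarrow G$ iff $G\rightarrow T_{k-1}$); a pattern with an unrestricted dual can never witness the failure of restricted dualities. A transitive tournament fares no better: the dense witnesses of unbounded expansion live inside members of $\mathcal C$ only as $\le p$-subdivisions of dense graphs, and such subdivisions contain no triangles, so the obstruction one can hope to extract from a candidate dual $D$ is not a tournament. Second, and more fundamentally, the slogan ``reorientation closure plays the role of topological closure'' conflates two closures acting at different levels. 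Topological closure enlarges the family of underlying graphs, which is how the undirected proof (via Lemma~\ref{lem:degchr}) plants high-girth, high-chromatic subdivisions inside $\mathcal C$ and then defeats any $N$-vertex dual by $\chi>N$. Reorientation closure leaves the underlying graphs untouched, and the subdivided dense graphs available in $\mathcal C$ have bounded chromatic number, so no pigeonhole on vertex colourings of the underlying graph can produce the contradiction. The mechanism actually needed is to orient each subdivision path as a directed path of a controlled length $\ell$ and to take $F$ to be a suitably oriented cycle: a homomorphism of the resulting digraph into a candidate dual $D$ of order $N$ sends branch vertices to vertices of $D$ joined by directed walks of length $\ell$, which either yields a proper $N$-colouring of the dense branch graph (impossible once its chromatic number, guaranteed large by Lemma~\ref{lem:degchr}, exceeds $N$) or forces a short closed directed walk in $D$, i.e.\ a homomorphic image of $F$ in $D$, contradicting $F\nrightarrow D$. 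None of this apparatus --- the choice of an oriented cycle as $F$, the directed-walk power-graph argument, the control of algebraic length against varying subdivision lengths --- appears in your sketch, and it is precisely the part that the reorientation hypothesis is there to enable.
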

\section{Intermezzo: Low Tree-Depth Decomposition and Odd-Distance Coloring}
Let $n$ be an odd integer and let $G$ be a graph. The problem of
finding a coloring of the vertices of $G$ with minimum number of colors such that two vertices at distance $n$ are colored differently, called {$D_n$-coloring} of $G$,  was introduced in 1977 in Graph Theory Newsletter by E. Sampathkumar \cite{Sampathkumar1977} (see also \cite{jensen2011graph}). In \cite{Sampathkumar1977},
Sampathkumar claimed that every planar graph has a $D_n$-coloring for every odd integer $n$ with $5$ colors, and conjectured that $4$ colors suffice.
Unfortunately, the claimed result was flawed, as witnessed by the graph depicted on Figure~\ref{fig:D3col}, which needs $6$ colors for a $D_3$-coloring \cite{Sparsity}.

\begin{figure}[ht]
\begin{center}
\includegraphics[width=\textwidth]{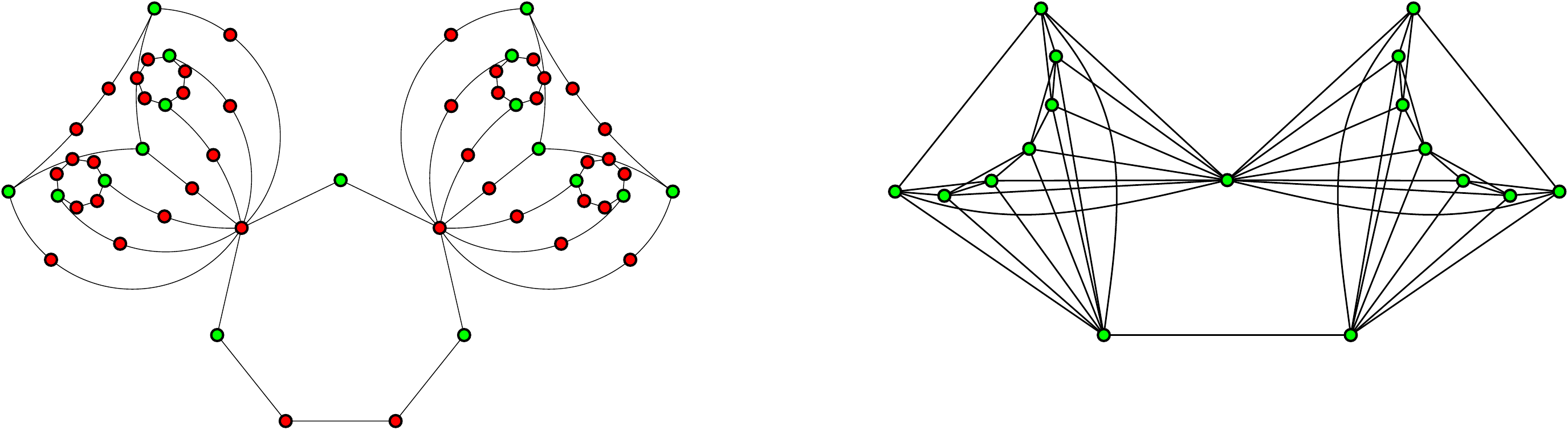}
\end{center}
\caption{On the left, a planar graph $G$ needing $6$-colors for a $D_3$-coloring. On the right, a witness: this a graph with vertex set $A\subset V(G)$ in which adjacent vertices are at distance $3$ in $G$, thus should get distinct colors in a $D_3$-coloring of $G$.}
\label{fig:D3col}
\end{figure}

Low tree-depth decomposition allows to prove that for any odd integer $n$, a fixed number of colors is sufficient for $D_n$-coloring planar graphs, and this results extends to all classes with bounded expansion.
\begin{theorem}[\cite{Sparsity}]
\label{thm:oddD}
For every class with bounded expansion $\mathcal C$ and every odd integer $n$ there exists a constant $N$ such that every graph $G\in\mathcal{C}$ has a $D_n$-coloring with at most $N$ colors.
\end{theorem}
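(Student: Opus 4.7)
The plan is to bound the chromatic number of the auxiliary graph $G^{(n)}$ on $V(G)$ whose edges are the pairs $\{u,v\}$ with $d_G(u,v)=n$, since a $D_n$-coloring of $G$ is literally a proper coloring of $G^{(n)}$. The goal is thus to produce, uniformly for $G\in\mathcal C$, a vertex coloring of $G$ using boundedly many colors that is proper on $G^{(n)}$.

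First I would apply Theorem~\ref{thm:chiBE} with parameter $p=n+1$. This yields a constant $C=C(\mathcal C,n)$ and a low tree-depth decomposition $\gamma:V(G)\to[C]$ such that, for every $I\subseteq[C]$ of size at most $n+1$, the subgraph $H_I:=G[\gamma^{-1}(I)]$ has tree-depth at most $|I|\leq n+1$. In particular every connected subgraph of $H_I$ has at most $2^{n+1}-1$ vertices on a longest path, so its diameter is bounded by a function of $n$.

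Second, I would refine $\gamma$ by attaching to every vertex $v$ a bounded ``fingerprint'' that encodes the position of $v$ inside each bounded tree-depth piece containing it. Concretely, for every $I\ni\gamma(v)$ with $|I|\leq n+1$, use the characterization \textbf{TD}$2$ to fix a centered coloring $\kappa_I$ of $H_I$ by at most $n+1$ auxiliary colors, and set
\[
\tilde c(v)\;=\;\bigl(\gamma(v),\;(\kappa_I(v))_{I\ni\gamma(v),\,|I|\leq n+1}\bigr).
\]
Since there are at most $\binom{C}{n+1}$ choices of $I$ and each $\kappa_I(v)$ takes at most $n+1$ values, $\tilde c$ uses only a bounded number $N=N(\mathcal C,n)$ of colors.

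Finally, I would check that $\tilde c$ separates pairs at distance exactly $n$. Given $u,v$ with $d_G(u,v)=n$, pick a shortest $(u,v)$-path $P=(v_0,\ldots,v_n)$ and let $I$ be its set of $\gamma$-colors, so that $|I|\leq n+1$, $P\subseteq H_I$ and $d_{H_I}(u,v)=n$. Since $P$ is a connected subgraph of $H_I$, the centered property of $\kappa_I$ applied recursively to $P$ and to the subpaths obtained by removing unique-color vertices produces an elimination-tree structure on $P$; the parity of $n$ forces the endpoints to land at distinct heights of this recursive decomposition, so $\kappa_I(u)\neq\kappa_I(v)$ and hence $\tilde c(u)\neq\tilde c(v)$.

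The main obstacle is exactly this last symmetry-breaking step, where the hypothesis that $n$ is odd is essential. For even $n$ the argument genuinely fails: the star $K_{1,k}$ already shows that $G^{(2)}$ can contain arbitrarily large cliques in a class of bounded expansion. Making the odd-parity step rigorous is the real content of the theorem; it amounts to showing that a centered coloring of any path of odd length cannot be symmetric under the endpoint-swapping involution, a combinatorial parity fact about centered colorings of paths that must be propagated from $P$ up to $H_I$. A cleaner alternative, used in \cite{Sparsity}, is to first apply a transitive fraternal augmentation \cite{POMNI} to turn odd-step reachability into an edge-relation in a larger bounded expansion class, and then apply the low tree-depth machinery directly to the augmented graph.
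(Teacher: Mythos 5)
Your overall architecture --- passing to the auxiliary graph $G^{(n)}$, taking a low tree-depth decomposition with parameter $n+1$ so that every shortest path realizing distance $n$ lies inside a single piece $H_I$ of tree-depth at most $n+1$, and then breaking ties inside each piece by a bounded per-piece fingerprint --- is exactly the architecture of the proof in \cite{Sparsity}, and it is consistent with the double-exponential bound in $\chi_n(G)$ mentioned after the theorem. The gap is in your last step: the fingerprint you chose, a centered coloring $\kappa_I$ of $H_I$, does not carry enough information, and the ``combinatorial parity fact about centered colorings of paths'' you rely on is false. A centered coloring of a path of odd length can assign the same color to both endpoints: on $P_4=v_1v_2v_3v_4$ the coloring $1,2,3,1$ is centered (every subpath has a color occurring exactly once), yet $v_1$ and $v_4$ are at distance $3$. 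Since $P_4$ can itself occur as a piece $H_I$, no recursion on unique-color vertices will rescue the claim $\kappa_I(u)\neq\kappa_I(v)$; oddness of $n$ is simply not visible to a centered coloring alone.

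What is recorded instead is metric data relative to an elimination forest. Fix for each $I$ a rooted forest $Y_I$ of height at most $n+1$ whose closure contains $H_I$, and let the fingerprint of $v$ in $H_I$ be its depth together with the vector $\bigl(\min(d_{H_I}(v,a_j),\,n+1)\bigr)_j$, where $a_j$ is the ancestor of $v$ at depth $j$; this takes at most $(n+2)^{n+1}$ values per piece, whence the doubly exponential overall bound. Now use the standard fact that every connected subgraph of the closure of a rooted forest contains a vertex that is an ancestor of all its vertices; applied to the shortest $u$--$v$ path $P\subseteq H_I$ this yields $m\in P$, a common ancestor of $u$ and $v$ at some depth $j$, with $d_{H_I}(u,m)+d_{H_I}(m,v)=n$. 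As $n$ is odd these two distances differ, both are at most $n$ so neither is truncated, and both appear at coordinate $j$ of the respective fingerprints; hence $u$ and $v$ receive different colors. This is where the parity of $n$ genuinely enters, replacing your unsupported claim. (Your closing remark that \cite{Sparsity} handles this theorem via transitive fraternal augmentation is also off the mark: that tool is used to construct the low tree-depth decomposition itself, i.e.\ Theorem~\ref{thm:chibound}, not to deal with the odd-distance step.)
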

The proof of Theorem~\ref{thm:oddD} relies on low tree-depth decomposition, and
 the bound $N$ given in \cite{Sparsity} for the number of colors sufficient for a $D_n$-coloring of a graph $G$ is double exponential in $\chi_n(G)$. Hence it is still not clear whether a uniform bound could exist for $D_n$-coloring of planar graphs.

\begin{problem}[van den Heuvel and Naserasr]
 Does there exist a constant $C$ such that for every odd integer $n$, it holds that every planar graph has a $D_n$-coloring with at most $C$ colors?
\end{problem}

Note that, however, there exists no bound for the {\em odd-distance coloring} of planar graphs, which requires that two vertices at odd distance get different colors. Indeed, one can construct outerplanar graphs having an arbitrarily large subset of vertices pairwise at odd distance (see Fig.~\ref{fig:oddcl}).

\begin{figure}[ht]
\begin{center}
\includegraphics[width=.35\textwidth]{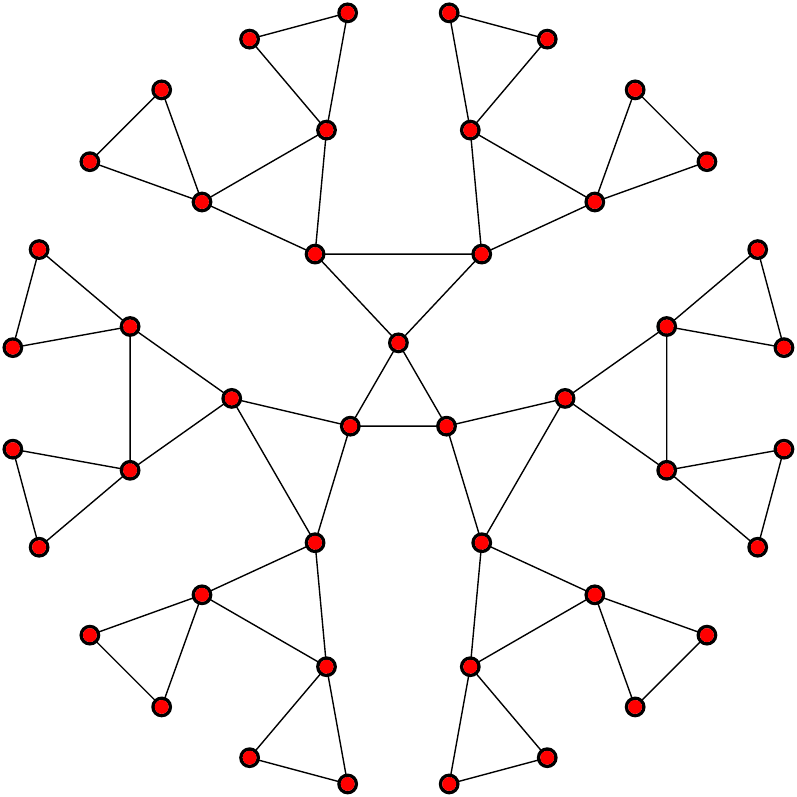}
\caption{There exist outerplanar graphs with arbitrarily large subset of vertices pairwise at odd distance. (In the figure, the vertices in the periphery are pairwise at distance $1$, $3$, $5$, or $7$.)}
\label{fig:oddcl}
\end{center}
\end{figure}

However, no construction requiring a large number of colors without having a large set of vertices pairwise at odd-distance is known. Hence the following problem.

\begin{problem}[Thomass\'e]
Does there exist a function $f:\bbbn\rightarrow\bbbn$ such that every planar graph without $k$ vertices pairwise at odd distance has an odd-distance coloring with at most $f(k)$ colors?
\end{problem}

\section{Low Tree-Depth Decomposition and Density of Shallow Minors, Shallow Topological Minors, and Shallow Immersions}
\label{sec:taxonomy}

Classes with bounded expansion, which have been introduced in \cite{POMNI}, may be viewed as a relaxation of the notion of proper minor closed class. The original definition of classes with bounded expansion relates to the notion of shallow minor, as introduced by Plotkin, Rao, and Smith
\cite{shallow}.

\begin{definition}
Let $G,H$ be graphs with $V(H)=\{v_1,\dots,v_h\}$ and let $r$ be an integer.
A graph $H$ is a {\em shallow minor} of a graph $G$ {\em at depth} $r$, if
there exists disjoint subsets $A_1,\dots,A_h$ of $V(G)$ such that
(see Fig.~\ref{fig:shm})
\begin{itemize}
  \item the subgraph of $G$ induced by $A_i$ is connected and as radius at most
  $r$,
  \item if $v_i$ is adjacent to $v_j$ in $H$, then some vertex in $A_i$ is
  adjacent in $G$ to some vertex in $A_j$.
\end{itemize}
\end{definition}

\begin{figure}[ht]
\begin{center}
\includegraphics[width=.75\textwidth]{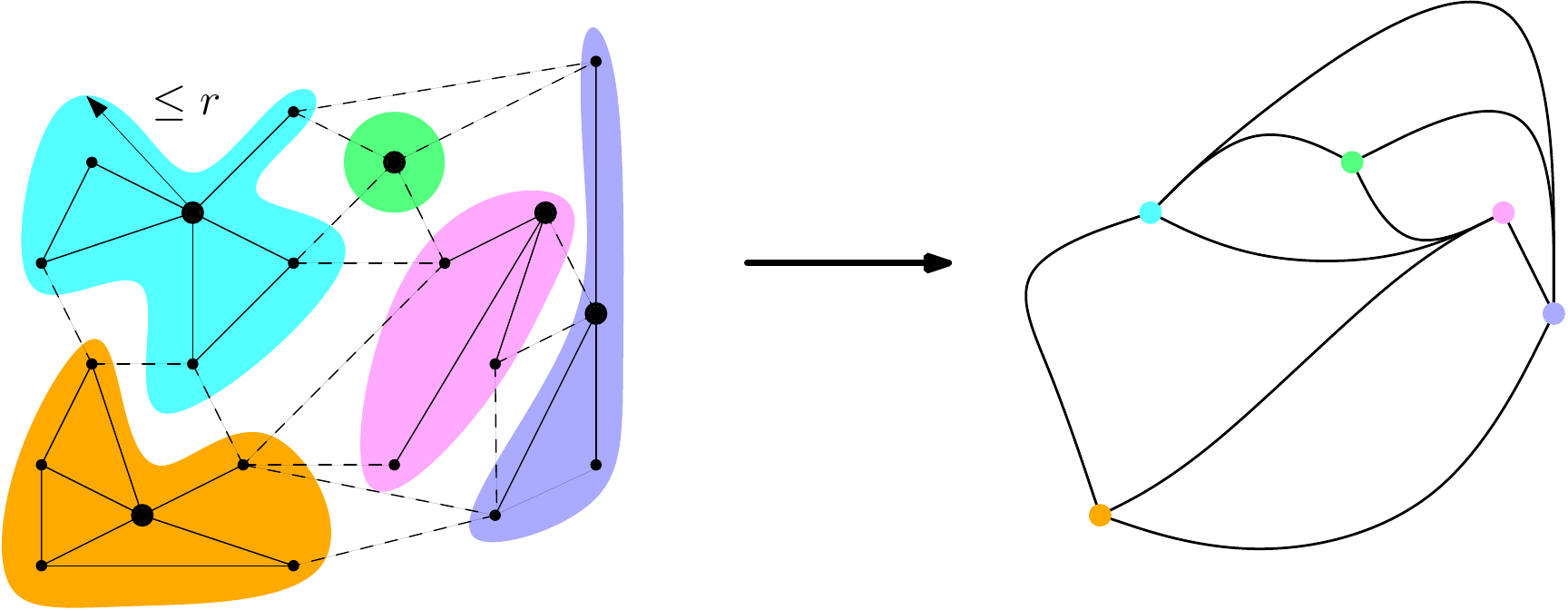}
\caption{A shallow minor}
\label{fig:shm}
\end{center}
\end{figure}

We denote \cite{POMNI, Sparsity} by $G\shm r$  the class of the (simple) graphs
which are shallow minors of $G$ at depth $r$,
and we denote by $\rdens{r}(G)$ the maximum density of a graph in
$G\shm r$, that is:
$$\rdens{r}(G)=\max_{H\in G\shm r}\frac{\|H\|}{|H|}$$
A class $\mathcal C$ has {\em bounded expansion} if
$\sup_{G\in\mathcal{C}}\rdens{r}(G)<\infty$  for each value of $r$.

Considering shallow minors may, at first glance, look arbitrary. Indeed
one can define as well the notions of shallow topological minors and shallow immersions:

\begin{definition}
A graph $H$ is a {\em shallow topological minor} at depth $r$ of a graph $G$ if some subgraph of $G$ is isomorphic to a subdivision of $H$ in which every edge has been subdivided at most $2r$ times (see Fig.~\ref{fig:stm}).

\begin{figure}[ht]
\begin{center}
\includegraphics[width=.75\textwidth]{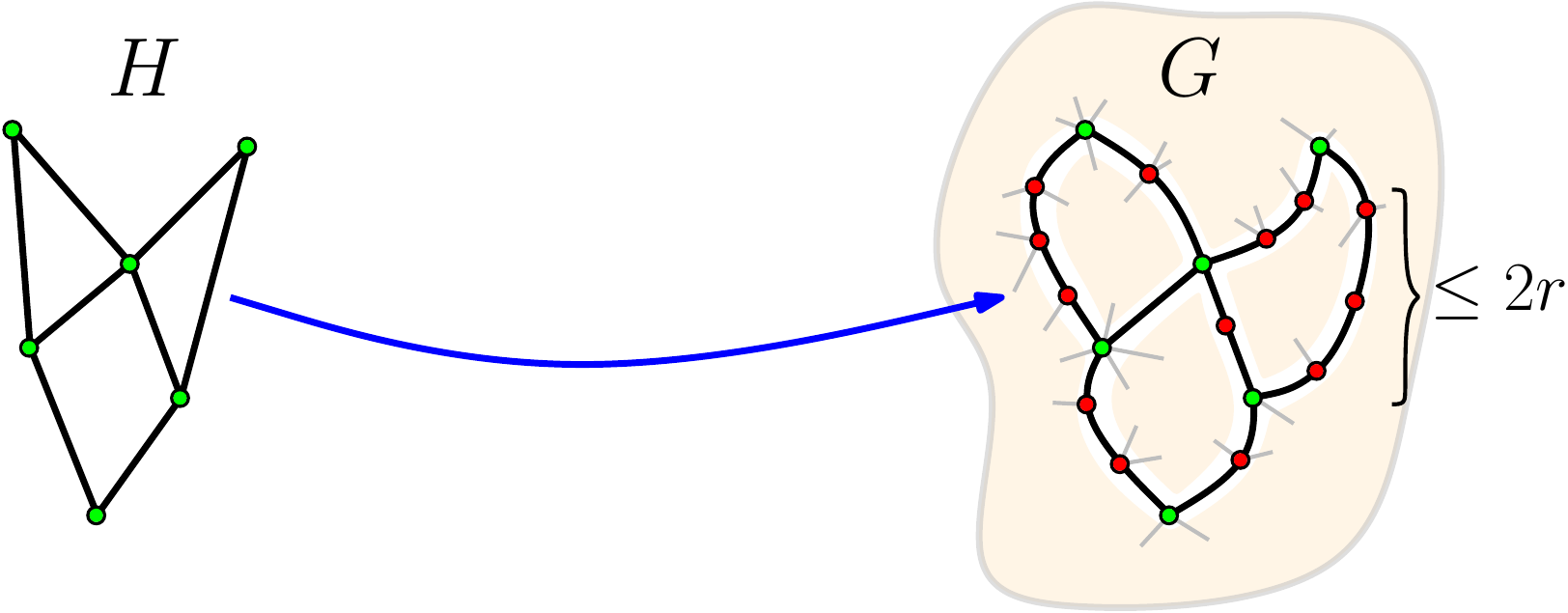}
\caption{$H$ is a shallow topological minor of $G$ at depth $r$}
\label{fig:stm}
\end{center}
\end{figure}

We denote \cite{POMNI, Sparsity} by $G\shtm r$  the class of the (simple) graphs
which are shallow topological minors of $G$ at depth $r$,
and we denote by $\trdens{r}(G)$ the maximum density of a graph in
$G\shtm r$, that is:
$$\trdens{r}(G)=\max_{H\in G\shtm r}\frac{\|H\|}{|H|}$$
\end{definition}

Note that shallow topological minors can be alternatively defined by considering how a graph $H$ can be topologically embedded in a graph $G$:
a graph $H$ with vertex set $V(H)=\{a_1,\dots,a_k\}$ is a shallow topological minor of a graph $G$ at depth $r$ is there exists vertices $v_1,\dots,v_k$ in $G$ and a family $\mathcal P$ of paths of $G$ such that
\begin{itemize}
\item two vertices $a_i$ and $a_j$ are adjacent in $H$ if and only if there is a path in $\mathcal{P}$ linking $v_i$ and $v_j$;
\item no vertex $v_i$ is interior to a path in $\mathcal{P}$;
\item the paths in $\mathcal{P}$ are internally vertex disjoint;
\item every path in $\mathcal{P}$ has length at most $2r+1$.
\end{itemize}

We can similarly define the notion of shallow immersion:
\begin{definition}
A graph $H$ with vertex set $V(H)=\{a_1,\dots,a_k\}$ is a {\em shallow immersion} of a graph $G$ at depth $r$ is there exists vertices $v_1,\dots,v_k$ in $G$ and a family $\mathcal P$ of paths of $G$ such that
\begin{itemize}
\item two vertices $a_i$ and $a_j$ are adjacent in $H$ if and only if there is a path in $\mathcal{P}$ linking $v_i$ and $v_j$;
\item the paths in $\mathcal{P}$ are edge disjoint;
\item every path in $\mathcal{P}$ has length at most $2r+1$;
\item no vertex of $G$ is internal to more than $r$ paths in $\mathcal{P}$.
\end{itemize}
We denote \cite{POMNI, Sparsity} by $G\shim r$  the class of the (simple) graphs
which are shallow immersions of $G$ at depth $r$,
and we denote by $\irdens{r}(G)$ the maximum density of a graph in
$G\shim r$, that is:
$$\irdens{r}(G)=\max_{H\in G\shim r}\frac{\|H\|}{|H|}$$
\end{definition}

It appears that although minors, topological minors, and immersions behave very differently, their shallow versions are deeply related, as witnessed by the following theorem:
\begin{theorem}[\cite{Sparsity}]
\label{thm:BE}
Let $\mathcal{C}$ be a class of graphs. Then the following are equivalent:
\begin{enumerate}
\item the class $\mathcal{C}$ has bounded expansion;
\item for every integer $r$ it holds $\sup_{G\in\mathcal{C}}\rdens{r}(G)<\infty$;
\item for every integer $r$ it holds $\sup_{G\in\mathcal{C}}\trdens{r}(G)<\infty$;
\item for every integer $r$ it holds $\sup_{G\in\mathcal{C}}\irdens{r}(G)<\infty$;
\item for every integer $r$ it holds $\sup_{H\in\mathcal{C}\shm r}\chi(H)<\infty$;
\item for every integer $r$ it holds $\sup_{H\in\mathcal{C}\shtm r}\chi(H)<\infty$;
\item for every integer $r$ it holds $\sup_{H\in\mathcal{C}\shim r}\chi(H)<\infty$.
\end{enumerate}
\end{theorem}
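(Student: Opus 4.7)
The plan is to decompose the equivalence into two conceptual blocks: equivalences among the three density invariants $\rdens r$, $\trdens r$, $\irdens r$, and equivalences between density and chromatic number on the shallow-operation classes. The assertion (1)$\iff$(2) is the definition of bounded expansion. I would then establish (2)$\iff$(3)$\iff$(4) via density comparisons, and complete the cycle through (2)$\iff$(5), (3)$\iff$(6), (4)$\iff$(7) as density-to-chromatic dualities, treating the depth parameter as quantified universally so that depth blow-ups between the three notions are harmless.

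For the density comparisons, the easy inclusions are that every shallow topological minor at depth $r$ is a shallow minor at the same depth $r$---split each branch path of length at most $2r+1$ at its middle edge and absorb the two halves, each of length at most $r$, into the corresponding endpoint branch-blobs, yielding blobs of radius at most $r$---and, for $r\geq 1$, it is also a shallow immersion at depth $r$, since internally vertex-disjoint paths are a fortiori edge-disjoint with no vertex internal to more than one. These deliver $\trdens r(G)\leq\min\bigl(\rdens r(G),\irdens r(G)\bigr)$. The nontrivial reverses invoke a Dvo{\v r}{\'a}k-type minor-expansion lemma: there is a function $f$ with $\rdens r(G)\leq f(\trdens{2r+1}(G))$, proved by selecting short representative paths inside each radius-$r$ branch-blob to convert a minor model into a topological one with controlled density loss. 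A companion lemma for immersions, based on splitting every vertex through which $k$ paths pass into $k$ copies to remove path collisions, yields $\irdens r(G)\leq g(\trdens{r'}(G))$ for some $g$ and $r'=O(r)$. Combined with the universal quantification over $r$, these establish (2)$\iff$(3)$\iff$(4).

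For the density--chromatic duality, the forward direction uses the subgraph-closure of $\mathcal{C}\shm r$, $\mathcal{C}\shtm r$, $\mathcal{C}\shim r$: every subgraph of a graph $H$ in such a class is again in the class and hence has density at most $\rdens r(G)$, so the degeneracy of $H$ is at most $2\rdens r(G)$ and $\chi(H)\leq 2\rdens r(G)+1$. This yields (2)$\Rightarrow$(5), (3)$\Rightarrow$(6), (4)$\Rightarrow$(7). The converse is more delicate because chromatic number does not in general control density ($K_{n,n}$ has chromatic number two and unbounded density). The key input is a shallow-minor Kostochka--Thomason theorem: for every $r$ there exist $r'=r'(r)$ and a function $F\to\infty$ with $\omega\bigl(G\shm{r'}\bigr)\geq F(\rdens r(G))$ for every graph $G$. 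Hence if $\rdens r$ is unbounded on $\mathcal{C}$, the clique number---and a fortiori the chromatic number---is unbounded on $\mathcal{C}\shm{r'}$, contradicting (5). Analogous arguments based on the corresponding topological and immersion density comparisons handle (6)$\Rightarrow$(3) and (7)$\Rightarrow$(4). The main technical obstacles are the minor-expansion lemma underlying the density comparisons and the shallow Kostochka--Thomason clique extraction underlying the chromatic converse, both substantial classical results from the Sparsity framework.
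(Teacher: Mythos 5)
The skeleton of your argument---(1)$\iff$(2) by definition, the depth-preserving inclusion of shallow topological minors into shallow minors and immersions, the deep converse comparisons at polynomially increased depth, and the degeneracy bound $\chi(H)\leq 2\rdens{r}(G)+1$ giving the density-to-chromatic implications---is sound and matches the standard treatment. But the converse direction (chromatic bounds imply density bounds) rests on a lemma that is false. You claim a ``shallow Kostochka--Thomason'' statement of the form $\omega(G\shm{r'})\geq F(\rdens{r}(G))$ with $F\to\infty$ and $r'=r'(r)$. Take $G$ to be a $d$-regular graph of girth greater than $6r'+3$: then $\rdens{0}(G)\geq d/2$ is arbitrarily large, yet $K_3$ cannot appear as a shallow minor of $G$ at depth $r'$ (three pairwise adjacent connected blobs of radius at most $r'$ would force a cycle of length at most $6r'+3$), so $\omega(G\shm{r'})=2$. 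More structurally, the class of graphs whose girth exceeds their maximum degree is nowhere dense but does not have bounded expansion; by Theorem~\ref{thm:ND} this is exactly a class on which all the clique numbers $\omega(\mathcal{C}\shm{r})$ are bounded while $\rdens{0}$ is not. Clique number of shallow minors is the invariant that separates nowhere dense from somewhere dense; it cannot detect the failure of bounded expansion, so no clique-extraction argument can close your loop.

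The correct tool---which the paper states immediately after the theorem precisely for this purpose---is Dvo\v r\'ak's Lemma~\ref{lem:degchr}: a graph of sufficiently large average degree contains, as a subgraph, the $1$-subdivision of a graph of chromatic number $c$. So if $\trdens{r}(G_n)\to\infty$ along a sequence in $\mathcal{C}$, pick witnesses $H_n\in G_n\shtm{r}$ of unbounded average degree, extract $1$-subdivisions of graphs $F_n$ with $\chi(F_n)\to\infty$, and observe that composing the two subdivisions places $F_n$ in $G_n\shtm{r'}$ for some $r'=O(r)$, contradicting item (6). This replaces your clique step; the rest of your architecture (routing everything through the topological-minor densities, with the depth parameter universally quantified) then goes through. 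Note also that the paper itself gives no proof of this theorem---it is cited from \cite{Sparsity}---so the comparison here is against the intended argument signposted by Lemma~\ref{lem:degchr} rather than a written proof.
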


In the above theorem, we see that not only shallow minors, shallow topological minors, and shallow immersions behave closely, but that the (sparse) graph density $\|G\|/|G|$ and the chromatic number $\chi(G)$ of a graph $G$ are also related. This last relation is intimately
related to the following result of Dvor\'ak \cite{Dvo2007}.
\begin{lemma}
\label{lem:degchr}
Let $c\geq 4$ be an integer and let $G$ be a graph with average degree
$d>56(c-1)^2\frac{\log (c-1)}{\log c-\log (c-1)}$.
Then the graph $G$ contains a subgraph $G'$ that is the $1$-subdivision of a graph with chromatic number $c$.
\end{lemma}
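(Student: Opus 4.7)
The plan is to construct the desired $1$-subdivision probabilistically: choose a random ``branch set'' $A\subseteq V(G)$, and for each vertex $v$ in $B=V(G)\setminus A$ use two of its neighbours in $A$ as the endpoints of a single edge of an auxiliary simple graph $F$. By construction $G$ then contains the $1$-subdivision of $F$ as a subgraph, so the task reduces to arranging $\chi(F)\geq c$; any $c$-chromatic subgraph of such an $F$ supplies the required $G'$ after deleting the corresponding subdivision vertices from $G$.

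First I would pass to a subgraph $H\subseteq G$ of minimum degree at least $d/2$ by iteratively deleting vertices of below-average degree. Then each vertex of $H$ is placed in $A$ independently with probability $p$ to be optimised later, and for each $v\in B$ with $|N(v)\cap A|\geq 2$ I would pick a uniformly random pair $\{a_v,b_v\}\subseteq N(v)\cap A$; this pair becomes an edge of $F$ with subdivision vertex $v$.

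The core estimate is the union bound
$$
\Pr[\chi(F)\leq c-1]\;\leq\;\sum_{\phi\colon A\to[c-1]}\Pr[\phi\text{ is a proper colouring of }F].
$$
Conditional on $A$ and $\phi$, for each $v\in B$ whose $A$-neighbours split into colour classes of sizes $d_1,\ldots,d_{c-1}$, Jensen's inequality yields
$$
\Pr\bigl[\phi(a_v)=\phi(b_v)\bigr]\;=\;\sum_{i=1}^{c-1}\binom{d_i}{2}\Big/\binom{d_A(v)}{2}\;\geq\;\frac{d_A(v)-(c-1)}{(c-1)(d_A(v)-1)},
$$
which tends to $1/(c-1)$ as $d_A(v)$ grows. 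Hence, once $d_A(v)$ is large compared with $c$, each chosen edge is bichromatic with probability at most roughly $(c-2)/(c-1)$, and the union bound contributes in expectation at most $(c-1)^{pn}\bigl((c-2)/(c-1)\bigr)^{(1-p)n}$. I would then choose $p$ to make this product strictly less than $1$ and use a Chernoff-type concentration on $d_A(v)\sim\mathrm{Bin}(\delta(v),p)$ so that the Jensen estimate is uniformly sharp over $v\in B$.

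The main obstacle is the simultaneous optimisation in $p$: one needs $p$ small enough to control $(c-1)^{|A|}$ but $p\delta$ large enough that $d_A(v)\geq 2$ holds for every $v\in B$ with overwhelming probability and the Jensen bound remains sharp. Balancing these constraints forces $\delta$ (and hence $d$) to satisfy the stated inequality, with the factor $\log(c-1)/(\log c-\log(c-1))$ arising as the reciprocal of the optimal exponent in the union-bound balance and the prefactor $56$ absorbing the remaining constants coming from the Chernoff and Jensen steps.
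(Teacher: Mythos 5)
The paper itself gives no proof of this lemma --- it is quoted from Dvo{\v r}\'ak's thesis --- but your probabilistic sketch is essentially Dvo{\v r}\'ak's argument: pass to minimum degree $d/2$, randomly split the vertices into branch vertices $A$ and potential subdivision vertices $B$, let each $v\in B$ pick a uniformly random pair of $A$-neighbours as an edge of $F$, and union-bound over the $(c-1)^{|A|}$ colourings of $A$ using exactly the convexity estimate you state. Two quantitative points need tightening. First, to land on the stated constant you should not drive $d_A(v)\to\infty$ and use the asymptotic bichromatic bound $(c-2)/(c-1)$: your own estimate $\frac{m-(c-1)}{(c-1)(m-1)}$ is increasing in $m$ and already equals $1/c$ at $m=(c-1)^2$, so requiring only $d_A(v)\ge (c-1)^2$ gives bichromatic probability at most $(c-1)/c$ per subdivision vertex; the balance $(c-1)^{|A|}\bigl(\tfrac{c-1}{c}\bigr)^{|B'|}<1$ then yields precisely the ratio $\frac{|B'|}{|A|}>\frac{\log(c-1)}{\log c-\log(c-1)}$ appearing in the hypothesis, while the threshold $(c-1)^2$ accounts for the quadratic factor in $d$. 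Second, you cannot insist that \emph{every} $v\in B$ meet the degree threshold: a union bound over $B$ via Chernoff would require $pd\gtrsim\log n$, which the hypothesis does not provide. Instead discard the bad vertices (a small expected fraction of $B$) and verify that the surviving set $B'$ still outnumbers $A$ by the required ratio, e.g.\ by combining the expectation bound on $|B'|$ with a Markov bound on $|A|$ to exhibit one good outcome of the partition before running the union bound over colourings. With these adjustments (and after discarding parallel choices so that $F$ is simple, and passing to a $c$-chromatic subgraph, both of which you note) your plan goes through.
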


It follows from Theorem~\ref{thm:BE} that the  notion of class with bounded expansion is quite robust. Not only classes with bounded expansion can be defined by edge densities and chromatic number,  but also by virtually all common combinatorial parameters \cite{Sparsity}.

If one considers the clique number instead of the density or the chromatic number, then a different type of classes is defined:
\begin{definition}
A class of graph $\mathcal{C}$ is {\em somewhere dense} if
there exists an integer $p$ such that every clique is a shallow topological minor at depth $p$ of some graph in $\mathcal C$ (in other words, $\mathcal{C}\shtm p$ contain all graphs); the class
$\mathcal{C}$ is {\em nowhere dense} if it is not somewhere dense.
\end{definition}

Similarly that Theorem~\ref{thm:BE}, we have several characterizations of nowhere dense classes.

\begin{theorem}[\cite{Sparsity}]
\label{thm:ND}
Let $\mathcal{C}$ be a class of graphs. Then the following are equivalent:
\begin{enumerate}
\item the class $\mathcal{C}$ is nowhere dense;
\item for every integer $r$ it holds $\limsup_{G\in\mathcal{C}}
\frac{\log\rdens{r}(G)}{\log |G|}=0$;
\item for every integer $r$ it holds $\limsup_{G\in\mathcal{C}}
\frac{\log\trdens{r}(G)}{\log |G|}=0$ ;
\item for every integer $r$ it holds $\limsup_{G\in\mathcal{C}}
\frac{\log\irdens{r}(G)}{\log |G|}=0$;
\item for every integer $r$ it holds $\sup_{H\in\mathcal{C}\shm r}\omega(H)<\infty$;
\item for every integer $r$ it holds $\sup_{H\in\mathcal{C}\shtm r}\omega(H)<\infty$;
\item for every integer $r$ it holds $\sup_{H\in\mathcal{C}\shim r}\omega(H)<\infty$.
\end{enumerate}
\end{theorem}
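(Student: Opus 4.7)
The plan is to mirror the proof scheme of Theorem~\ref{thm:BE} and to centre the argument on condition~(6), which coincides with the definition~(1). Indeed, by the definition of somewhere-dense, $\mathcal C$ is somewhere dense iff there exists an integer $p$ with $\sup_{H \in \mathcal C \shtm p} \omega(H)=\infty$; negating yields (1)~$\Leftrightarrow$~(6) immediately.

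The easy directions of the remaining equivalences come from the trivial inclusions $\mathcal{C}\shtm r \subseteq \mathcal{C}\shm r$ and $\mathcal{C}\shtm r \subseteq \mathcal{C}\shim r$. A shallow topological minor with branch vertices $v_1,\dots,v_n$ and internally vertex-disjoint paths of length at most $2r+1$ realises itself as a shallow minor at depth $r$ after splitting each path at its midpoint and assigning the halves to the two endpoints (producing branch sets $A_i$ of radius at most $r$ around $v_i$), and realises itself as a shallow immersion at depth $r$ since internally vertex-disjoint paths are in particular edge-disjoint with no repeated internal vertex. Hence $\trdens{r}(G)\le\min(\rdens{r}(G),\irdens{r}(G))$ and $\omega(\mathcal C\shtm r)\le\min(\omega(\mathcal C\shm r),\omega(\mathcal C\shim r))$, yielding (2)~$\Rightarrow$~(3), (4)~$\Rightarrow$~(3), (5)~$\Rightarrow$~(6) and (7)~$\Rightarrow$~(6).

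For the converse implications, two extractions are needed. The first converts a shallow minor (or shallow immersion) containing $K_n$ into a shallow topological minor containing $K_{n'}$ with $n'\to\infty$: from the branch sets $A_1,\dots,A_n$ with centres $v_i$ and prescribed crossing edges, I build for each pair $\{i,j\}$ a candidate $v_i$-$v_j$ path of length at most $2r+1$ (through $A_i$, across a crossing edge, through $A_j$), and then use a greedy/Ramsey-type selection on the intersection hypergraph of these $\binom{n}{2}$ paths to keep a sub-clique of size $n'\to\infty$ on which the selected paths are internally vertex-disjoint, producing $K_{n'}\in G\shtm r$; a parallel refinement (edge-disjoint to internally vertex-disjoint) handles immersions. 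This establishes (6)~$\Rightarrow$~(5) and (6)~$\Rightarrow$~(7). The second extraction links the density conditions to clique numbers via Lemma~\ref{lem:degchr}: assuming (3) fails, i.e.\ $\limsup\log\trdens{r}(G)/\log|G|>0$ along some sequence $G_n\in\mathcal C$, the densest shallow topological minors of $G_n$ at depth $r$ have average degree polynomial in $|G_n|$, so Lemma~\ref{lem:degchr} yields inside them a $1$-subdivision of a graph with chromatic number tending to infinity; iterating the lemma a bounded number of times on chromatic-critical subgraphs (each round composing a depth-$1$ topological minor with the current depth-$r^\ast$ one, hence multiplying $r^\ast$ by a bounded constant) eventually extracts $K_{t_n}$ with $t_n\to\infty$ as a shallow topological minor of $G_n$ at a depth depending only on $r$, contradicting (6). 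Combined with the inclusions and the clique equivalences, this delivers also (6)~$\Rightarrow$~(2) and (6)~$\Rightarrow$~(4), closing the diagram.

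The main obstacle will be the density-to-clique conversion: Lemma~\ref{lem:degchr} produces large chromatic number rather than a clique directly, so one must iterate it on critical subgraphs and carefully track both the accumulated depth (multiplied by a bounded factor each round) and the surviving target (shrinking roughly to its square root each round), while ensuring that termination can be reached at a depth depending only on $r$ and not on $|G_n|$. Since the $\limsup$ formulation requires only the qualitative transfer from polynomial density to unbounded cliques, crude bounds suffice, but this bookkeeping is the most delicate step; once it is in place, the full seven-way equivalence follows by chasing implications through condition~(6).
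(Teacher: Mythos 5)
First, note that the paper does not prove Theorem~\ref{thm:ND}: it is quoted from \cite{Sparsity}, so your proposal can only be measured against the standard proof and against the tools the survey itself points to. Your skeleton is the right one (centre everything on condition~(6), which is the definition of nowhere dense; get the easy implications from $\mathcal{C}\shtm r\subseteq\mathcal{C}\shm r$ and $\mathcal{C}\shtm r\subseteq\mathcal{C}\shim r$; then prove two hard ``extraction'' lemmas). But both extractions, as you describe them, have genuine gaps. For the first one, the greedy/Ramsey selection on a fixed family of candidate paths does not work: if a branch set $A_i$ is a path of length $r$ rooted at $v_i$ and all of its crossing edges leave from deep vertices, then any two candidate paths out of $A_i$ share the internal prefix of that path, so \emph{no} subfamily of size $3$ or more of your fixed candidates is internally vertex-disjoint and the Ramsey step returns $n'=2$. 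The resolution is a dichotomy (either the crossing edges are spread out, giving disjoint paths, or they concentrate on few vertices, which then themselves form a denser/deeper structure), iterated over the depth; this is exactly Dvo\v r\'ak's polynomial inequality relating $\rdens{r}$ to $\trdens{r'}$, and it is a genuine induction, not a one-line selection argument. The same objection applies to your edge-disjoint-to-vertex-disjoint refinement for immersions, and also to your closing claim that (6)$\Rightarrow$(2) and (6)$\Rightarrow$(4) ``follow from the inclusions'': the inclusions give $\trdens{r}\leq\rdens{r}$, which is the wrong direction, so the minor-density and immersion-density conditions again require this transfer lemma.

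The second gap is more serious: Lemma~\ref{lem:degchr} is the wrong tool for this theorem. It converts average degree into the \emph{chromatic} number of a $1$-subdivided pattern, and chromatic number does not yield a clique; iterating it only produces, at ever-increasing depth, graphs of unbounded chromatic number whose density shrinks (roughly a cube root per round), so the process never terminates in a $K_{t_n}$ at a depth depending only on $r$. Lemma~\ref{lem:degchr} is the engine behind Theorem~\ref{thm:BE} (the chromatic-number characterization of bounded expansion), whereas the clique conditions (5)--(7) of Theorem~\ref{thm:ND} need the superlinear-density-to-clique-subdivision theorem of Jiang, ${\rm ex}(n,K_k^{(\leq p)})=O(n^{1+10/p})$, which the survey states explicitly and flags as ``a main ingredient'' of the trichotomy Theorem~\ref{thm:tri}. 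The correct step is: if (3) fails, some $H_n\in G_n\shtm r$ satisfies $\|H_n\|\geq |H_n|^{1+\epsilon}$, so by Jiang's bound with $p\approx 20/\epsilon$ the graph $H_n$ contains a $(\leq p)$-subdivision of an arbitrarily large clique, hence $\omega(\mathcal{C}\shtm{r'})=\infty$ for a fixed $r'$ depending only on $r$ and $\epsilon$, contradicting (6). You do have the superlinear density in hand at that point in your argument; you just feed it to the wrong lemma.
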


Note that every class with bounded expansion is nowhere dense.
As mentioned in Theorem~\ref{thm:chiBE}, classes with bounded expansion are also characterized by the fact that they allow low tree-depth decompositions with bounded number of colors. A similar statement holds for nowhere dense classes:

 Precisely, we have the following:
 \begin{theorem}
\label{thm:chiND}
Let $\mathcal{C}$ be a class of graphs, then the following are equivalent:
\begin{enumerate}
\item for every integer $p$ it holds $\limsup_{G\in\mathcal{C}}\frac{\chi_p(G)}{\log |G|}=0$;
\item the class $\mathcal{C}$ is nowhere dense.
\end{enumerate}
\end{theorem}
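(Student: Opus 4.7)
The plan is to prove the two implications separately, mirroring the strategy used for Theorem~\ref{thm:chiBE} but with an extra logarithmic factor of slack, and invoking the characterisations of nowhere dense classes collected in Theorem~\ref{thm:ND}.

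For the direction $(2)\Rightarrow(1)$, I would exploit the quantitative content of Theorem~\ref{thm:chiBE}. The proof via transitive fraternal augmentations in \cite{POMNI} delivers, for each $p$, an explicit integer $r(p)$ and a polynomial $F_p$ such that
\[
\chi_p(G) \le F_p\bigl(\rdens{r(p)}(G)\bigr).
\]
When $\mathcal{C}$ is nowhere dense, item $(2)$ of Theorem~\ref{thm:ND} gives $\log\rdens{r(p)}(G)=o(\log|G|)$, so for every $\varepsilon>0$ we have $\rdens{r(p)}(G)\le |G|^{\varepsilon}$ for sufficiently large $G\in\mathcal{C}$. Substituting into the polynomial bound yields $\chi_p(G)\le|G|^{C_p\varepsilon}$, and since $\varepsilon$ is arbitrary, $\chi_p(G)/\log|G|\to 0$.

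For the reverse direction I would argue the contrapositive. Assume $\mathcal{C}$ is somewhere dense and (passing to its monotone closure if needed) also closed under subgraph. Fix $p_0$ such that $\mathcal{C}\shtm p_0$ contains every clique, and for each $n$ pick $H_n\in\mathcal{C}$ that is a $(\le 2p_0)$-subdivision of $K_n$; then $|H_n|\le p_0 n^2$, so $\log|H_n|=O(\log n)$. It suffices to exhibit a fixed $q=q(p_0)$ with $\chi_q(H_n)\ge n^{\Omega(1)}$, which contradicts $(1)$. This is the converse of the bound used in the first direction: a low tree-depth decomposition of $H_n$ with $c$ colours (parameter $q$) should induce, via the tree-depth/chromatic-number mechanism that drives equivalences $(5)$--$(7)$ of Theorem~\ref{thm:BE}, a proper colouring with polynomially many colours of every shallow minor of $H_n$ at depth at most $q$. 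Applied to $K_n$, viewed as a shallow minor of $H_n$ at depth $p_0$, this gives $n=\chi(K_n)\le\Phi(\chi_q(H_n))$ for some polynomial $\Phi$, whence $\chi_q(H_n)\ge n^{\Omega(1)}$, as required.

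The main obstacle is pinning down the quantitative dependencies on both sides. On the $(2)\Rightarrow(1)$ side one must note that the fraternal augmentation construction of \cite{POMNI} really gives polynomial rather than iterated-exponential bounds in $\rdens{r(p)}(G)$; on the $(1)\Rightarrow(2)$ side one must carefully transfer a low tree-depth decomposition of $G$ to a proper colouring of its shallow minors with only a polynomial loss. Once these quantitative refinements of Theorems~\ref{thm:chiBE} and~\ref{thm:BE} are in place, both implications reduce to bookkeeping against the density bounds furnished by Theorem~\ref{thm:ND}.
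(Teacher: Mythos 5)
Your direction $(2)\Rightarrow(1)$ follows the same route the paper takes: the paper derives this implication from Theorem~\ref{thm:chibound}, i.e.\ the polynomial bound $\chi_p(G)\le P_p(\trdens{2^{p-2}+1}(G))$, combined with the density characterization of nowhere dense classes in Theorem~\ref{thm:ND}; your $F_p(\rdens{r(p)}(G))$ is the same bound up to the (inessential, by Theorem~\ref{thm:BE}) exchange of shallow minors for shallow topological minors. But your last step is a non sequitur: from $\chi_p(G)\le|G|^{C_p\varepsilon}$ for every $\varepsilon>0$ you conclude $\chi_p(G)/\log|G|\to0$, and this does not follow, since $|G|^{\varepsilon'}$ dominates $\log|G|$ for every fixed $\varepsilon'>0$. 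What your argument actually yields is $\log\chi_p(G)/\log|G|\to0$, that is $\chi_p(G)=|G|^{o(1)}$. That is the correct (and surely intended) statement: item (1) as printed is missing a logarithm in the numerator (compare items (2)--(4) of Theorem~\ref{thm:ND}), and as literally written it is false --- a nowhere dense class may contain graphs of slowly growing girth whose chromatic number, hence $\chi_1$, exceeds any constant multiple of $\log|G|$. You should flag this mismatch explicitly rather than bridge it silently; no argument can close that gap.

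For $(1)\Rightarrow(2)$, which the paper does not argue at all (it only addresses the direction bounding $\chi_p$), your contrapositive is the right mechanism: a low tree-depth decomposition with parameter $q$ (for suitable $q$ depending on $p_0$) of a $(\le 2p_0)$-subdivision $H_n$ of $K_n$ yields a proper coloring of $K_n$ with polynomially many colors, whence $\chi_q(H_n)\ge n^{\Omega(1)}$ while $\log|H_n|=O(\log n)$. Two caveats. First, the transfer from a decomposition of $H_n$ to a coloring of $K_n$ with only polynomial loss is the heart of the implication and is left entirely as a black box; it is exactly the $(1)\Rightarrow(2)$ content of Theorem~\ref{thm:chiBE} and deserves at least a precise statement. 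Second, ``passing to the monotone closure if needed'' is not free: condition (1) for $\mathcal{C}$ need not transfer to the monotone closure, because a small subgraph $H$ of a huge $G\in\mathcal{C}$ satisfies $\chi_q(H)\le\chi_q(G)$ while $\log|H|$ may be far smaller than $\log|G|$. The honest version works with the member $G_n\in\mathcal{C}$ that actually contains the subdivided clique, uses monotonicity of $\chi_q$ under subgraphs, and then needs some control on $|G_n|$; this is precisely the point where the statement really requires $\mathcal{C}$ to be (essentially) monotone, as the paper's neighboring covering theorems explicitly assume.
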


The direction bounding $\chi_p(G)$ of both Theorem~\ref{thm:chiBE} and~\ref{thm:chiND} follow from the next more precise result:

\begin{theorem}[\cite{Sparsity}]
\label{thm:chibound}
For every integer $p$ there is a polynomial $P_p$ (${\rm deg}\,P_p\approx 2^{2^p}$) such that for
every graph $G$ it holds
$$
\chi_p(G)\leq P_p(\trdens{2^{p-2}+1}(G)).
$$
\end{theorem}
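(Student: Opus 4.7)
The plan is to use iterated transitive fraternal augmentations, following the method introduced in \cite{POMNI}. First, I would orient $G$ as $\vec{G}_0$ so that the maximum in-degree is bounded linearly by $\trdens{0}(G)$; this is possible because $\trdens{0}(G)$ controls the degeneracy of $G$ up to a constant factor. Then, for $i=0,1,\dots,p-2$, I would form $\vec{G}_{i+1}$ from $\vec{G}_i$ by the \emph{transitive fraternal augmentation}: add a transitive arc $u\to w$ whenever $u\to v\to w$ is a directed path in $\vec{G}_i$, and add a fraternal edge between $u$ and $v$ (with orientation chosen to preserve bounded in-degree) whenever $u$ and $v$ share an in-neighbor in $\vec{G}_i$. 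The key structural invariant is that every arc of $\vec{G}_i$ can be traced back to a directed path in $\vec{G}_0$, and hence to a path in $G$, of length at most roughly $2^i$, so after $p-2$ rounds the witnesses stay within the depth budget $2^{p-2}+1$ appearing in the statement.

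Second, I would invoke the density control lemma for fraternal augmentations: the maximum in-degree $\Delta^{-}(\vec{G}_{i+1})$ is bounded by a polynomial in $\Delta^{-}(\vec{G}_i)$ and $\trdens{r_{i+1}}(G)$, where $r_{i+1}\leq 2^{i}+1$. This is proved by showing that a dense sub-digraph of $\vec{G}_{i+1}$ would back-lift, through concatenation of transitive and fraternal witnesses, to a dense shallow topological minor of $G$ at depth at most $r_{i+1}$. Composing these polynomial bounds across $p-1$ iterations gives a polynomial bound on $\Delta^{-}(\vec{G}_{p-2})$ in $\trdens{2^{p-2}+1}(G)$; because each composition roughly squares the previous degree, the final polynomial $P_p$ ends up of doubly-exponential degree $\approx 2^{2^p}$, matching the claim.

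Third, I would translate the simple graph underlying $\vec{G}_{p-2}$ into a low tree-depth decomposition of $G$ with parameter $p$. A proper coloring of this simple graph uses at most $2\Delta^{-}(\vec{G}_{p-2})+1$ colors (via the degeneracy bound). The combinatorial heart is to verify that such a coloring is centered with parameter $p$ on $G$, i.e.\ that in any connected subgraph of $G$ on at most $p$ vertices, enough augmentation steps have been performed to force some vertex to sit at the top of the quasi-tree of augmented arcs spanning that subgraph, making its color unique. Combining everything yields $\chi_p(G)\leq P_p(\trdens{2^{p-2}+1}(G))$.

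The main obstacle is the density control lemma for a single augmentation step. One has to show that every subgraph of $\vec{G}_{i+1}$ of prescribed edge density back-lifts to a shallow topological minor of $G$ of commensurate density at depth $\leq r_{i+1}$. The transitive part is essentially free, but the fraternal part requires a careful selection (via partitioning and a pigeonhole argument) to ensure that the paths chosen in $\vec{G}_0$ to realize the newly created fraternal edges are internally vertex disjoint, so that the witness is a genuine topological minor rather than merely a minor. This is the most delicate ingredient and is where essentially all of the quantitative loss, and hence the doubly-exponential degree of $P_p$, is incurred.
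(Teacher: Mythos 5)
Your route is the standard one: this survey does not reprove Theorem~\ref{thm:chibound} (it cites \cite{Sparsity}), but it states explicitly that transitive fraternal augmentation was introduced in \cite{POMNI} precisely to prove this theorem, and your three-stage plan (bounded in-degree orientation, iterated augmentations with density control via back-lifting witnesses to shallow topological minors, then a proper coloring of the final augmented graph) is exactly that argument, including the correct identification of the fraternal step as the delicate point and the source of the doubly exponential degree.

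Two slips are worth fixing before this counts as a proof sketch one could execute. First, with the in-degree convention you chose, the fraternal rule must join two vertices that are in-neighbors of a \emph{common head} (i.e.\ $u\to z$ and $v\to z$), since the number of such pairs is controlled by $\sum_z\binom{d^-(z)}{2}$; joining vertices that ``share an in-neighbor'' ($w\to u$, $w\to v$) is the out-degree configuration and is not bounded by the invariant you maintain. Second, and more substantively, the property you must verify at the end concerns connected subgraphs of $G$ receiving at most $p$ \emph{colors}, not connected subgraphs on at most $p$ \emph{vertices}: the conclusion of the theorem is that any $i\leq p$ color classes induce a subgraph of tree-depth at most $i$, and such a subgraph can be arbitrarily large (a long $2$-colored path already shows that the ``at most $p$ vertices'' version implies nothing about tree-depth). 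The actual argument shows that in any connected subgraph using at most $p$ colors the augmentation arcs force a uniquely colored vertex, and then recurses on its deletion to bound the tree-depth. With those corrections your outline coincides with the proof in \cite{POMNI,Sparsity}.
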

Note that the original proof given in \cite{POMNI} gave a slightly weaker bound, and that an alternative proof of this result has been obtained by Zhu \cite{Zhu2008}, in a paper relating low tree-depth decomposition with the generalized coloring numbers introduced by Kierstead and Yang \cite{Kierstead2003}.

\section{Low Tree-Depth Decomposition and Covering}
In a low treedepth decomposition of a graph $G$ by $N$ colors and for parameter $t$,
the subsets of $t$ colors define a disjoint union of clusters that cover the graph, such that each cluster has tree-depth at most $t$, every vertex belongs to at most $\binom{N}{t}$ clusters, and every connected subgraph of order $t$ is included in at least one cluster.

It is natural to ask whether the condition that such a covering comes from a coloring could be dropped.

\begin{theorem}
Let $\mathcal C$ be a monotone class.

Then $\mathcal C$ has bounded expansion if and only if  there exists a  function $f$ such that for every integer $t$, every graph $G\in\mathcal C$ has a
covering $C_1,\dots, C_k$ of its vertex set such that
\begin{itemize}
\item each $C_i$ induces a connected subgraph with tree-depth at most $t$;
\item every vertex belongs to at most $f(t)$ clusters;
\item every connected subgraph of order at most $t$ is included in at least one cluster.
\end{itemize}
\end{theorem}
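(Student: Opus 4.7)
The plan is to handle the two implications separately, with the bulk of the work on the backward direction.

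\textit{Forward direction.} Assume $\mathcal{C}$ has bounded expansion and fix $t \geq 1$. By Theorem~\ref{thm:chiBE} there is an integer $N = N(t)$ such that $\chi_t(G) \leq N$ for every $G \in \mathcal{C}$. Fix a corresponding low tree-depth decomposition $\phi : V(G) \to \{1,\dots,N\}$. For every $I \subseteq \{1,\dots,N\}$ with $|I| \leq t$, the induced subgraph $G[\phi^{-1}(I)]$ has tree-depth at most $|I| \leq t$; take all connected components of all such $G[\phi^{-1}(I)]$ as the clusters. Each cluster is then a connected induced subgraph of tree-depth $\leq t$. A vertex $v$ belongs to exactly one such component per set $I$ with $\phi(v) \in I$, so it lies in at most $\binom{N}{t}$ clusters. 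Any connected subgraph $S$ of $G$ of order $\leq t$ uses a color set $I = \phi(V(S))$ of size $\leq t$, is contained in $G[\phi^{-1}(I)]$, and being connected lies in one component; taking singleton $I = \{c\}$ ensures that the clusters cover $V(G)$.

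\textit{Backward direction.} Assume the covering property holds with function $f$. By Theorem~\ref{thm:BE} it suffices to bound $\trdens{r}(G)$ uniformly for $G \in \mathcal{C}$ and each fixed $r$. Let $H \in G\shtm r$, realized by principal vertices $v_1,\dots,v_n$ of $G$ and a family of internally vertex-disjoint paths $P_e$ of length $\leq 2r+1$, hence order $\leq 2r+2$. Apply the covering to $G$ with parameter $t := 2r+2$; since each $P_e$ is connected of order $\leq t$, it is contained in some cluster $C_e$. Assign $e$ to $C_e$, and for each cluster $C$ let $H_C$ be the subgraph of $H$ whose edge set $E_C$ consists of the edges assigned to $C$. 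All principal vertices of these edges lie in $C$, and the paths $P_e$ for $e \in E_C$ realize $H_C$ as a (shallow) topological minor, hence a minor, of $G[C]$. Minor-monotonicity of tree-depth then gives ${\rm td}(H_C) \leq {\rm td}(G[C]) \leq t$, and a graph of tree-depth $t$ on $n$ vertices has at most $(t-1)n$ edges. Summing over clusters,
\[
|E(H)| \;=\; \sum_C |E_C| \;\leq\; (t-1) \sum_C |V(H_C)| \;\leq\; (t-1)\, f(t)\, |V(H)|,
\]
where the last inequality uses that each principal vertex lies in at most $f(t)$ clusters. Hence $\trdens{r}(G) \leq (2r+1)\, f(2r+2)$, and $\mathcal{C}$ has bounded expansion.

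The main obstacle is the backward direction: one must pick the correct parameter $t = 2r+2$ so that every realizing path fits into a single cluster, and then extract the sparsity of $H$ not from the sparsity of $G$ itself (which would not yield the right bound once $r$ grows) but from the observation that each local subgraph $H_C$ inherits low tree-depth from its cluster via minor-monotonicity. Once this is in place, the counting is the standard $(t-1)n$ edge bound for tree-depth $t$, and the forward direction is essentially the definition of $\chi_t(G)$ combined with passing to connected components of color subsets.
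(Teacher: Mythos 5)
Your proof is correct, but your treatment of the backward direction is genuinely different from the paper's. The paper argues by contraposition: if $\mathcal C$ does not have bounded expansion, monotonicity yields $p$-th subdivisions of bipartite graphs $H_d$ of arbitrarily large average degree inside $\mathcal C$; a covering of such a subdivision with parameter $t=2(p+1)$ is then converted into a covering of $H_d$ by stars, and an orientation/indegree count gives $f(t)\geq d/2$, a contradiction. You instead argue directly: from the covering of $G$ with parameter $t=2r+2$ you assign each realizing path of a shallow topological minor $H\in G\shtm r$ to a cluster containing it, observe that the portion $H_C$ of $H$ landing in a cluster $C$ is a (topological) minor of $G[C]$ and hence has tree-depth at most $t$ by minor-monotonicity, and sum the resulting $(t-1)|V(H_C)|$ edge bounds using the multiplicity bound $f(t)$. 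Your route buys an explicit quantitative conclusion, $\trdens{r}(G)\leq (2r+1)f(2r+2)$, applies to each graph $G$ individually, and in particular does not use the monotonicity hypothesis at all (the paper needs it to place the subdivisions $H_d$ in the class); the paper's route avoids invoking minor-monotonicity of tree-depth and the characterization of bounded expansion via $\trdens{r}$, working only with the star/orientation argument on the dense witness. The forward direction is the same in both: it is the low tree-depth decomposition of Theorem~\ref{thm:chiBE}, with clusters the connected components of the subgraphs induced by at most $t$ color classes.
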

\begin{proof}
One direction is a direct consequence of Theorem~\ref{thm:chiBE}.
Conversely, assume that the class $\mathcal C$ does not have bounded expansion. Then there exists an integer $p$ such that
for every integer $d$ the class $\mathcal C$ contains the $p$-th subdivision of a graph $H_d$ with average degree at least $d$. Moreover, it is a standard argument that we can require $H_d$ to be
bipartite (as every graph with average degree $2d$ contains a bipartite subgraph with average degree at least $d$).

Let $t=2(p+1)$ and let $d=2f(t)+1$. Assume for contradiction that there exist clusters $C_1,\dots,C_k$ as required, then we can cover $H_d$ by clusters $C_1',\dots,C_k'$ such that
each $C_i'$ induces a star (possibly reduced to an edge),
every vertex belongs to at most $f(t)$ clusters, and
every edge is included in at least one cluster.
If an edge $\{u,v\}$ of $H_d$ is included in more than two clusters, it is easily checked that (at least) one of $u$ and $v$ can be safely removed from one of the cluster. Hence we can assume that each edge of $H_d$ is covered exactly once. To each cluster $C_i'$ associates the center of the star induced by $C_i'$ (or an arbitrary vertex of $C_i'$ if $C_i'$ has cardinality $2$) and orient the edges of the star induced by $C_i'$ away from the center. This way, every edge is oriented once and every vertex gets indegree at most $f(t)$. However, summing the indegrees we get $f(t)\geq d/2$, a contradiction.
\end{proof}
It is natural to ask whether similar statements would hold, if we weaken the condition that each cluster has tree-depth at most $t$ while we strengthen the condition that every connected subgraph of order at most $t$ is included in some cluster.
Namely, we consider the question whether a similar statement holds
if we allow each cluster to have radius at most $2t$ while requiring that every $t$-neighborhood is included in some cluster.
In the context of their solution of model checking problem for nowhere dense classes, Grohe, Kreutzer and Siebertz
introduced in \cite{Grohe2013} the notion of $r$-neighborhood cover and proved that
nowhere dense classes admit such cover with small maximum degree, and proved that nowhere dense classes and bounded expansion classes admit such nice covering.

Precisely,
for $r\in\bbbn$, an {\em $r$-neighborhood cover} $\mathcal{X}$ of a graph $G$ is a set of connected subgraphs of $G$ called {\em clusters}, such that for every vertex $v\in V(G)$ there is some
$X\in\mathcal{X}$ with $N_r(v)\subseteq X$.
The {\em radius} ${\rm rad}(\mathcal{X})$ of a cover $\mathcal{X}$ is the maximum radius of its clusters. The {\em degree} $d^\mathcal{X}(v)$ of $v$ in $\mathcal{X}$ is the number of clusters that contain $v$. The {\em maximum degree} $\Delta(\mathcal{X})=\max_{v\in V(G)}d^\mathcal{X}(v)$.
For a graph $G$ and $r\in\bbbn$ we define
$\tau_r(G)$ as the minimum maximum degree of an $r$-neighborhood cover of radius at most $2r$ of $G$.

The following theorem is proved in \cite{Grohe2013}.
\begin{theorem}
\label{thm:1b}
Let $\mathcal{C}$ be a class of graphs with bounded expansion. Then there is a function $f$ such that for all $r\in\bbbn$ and all graphs $G\in\mathcal{C}$ , it holds
$\tau_r(G)\leq f(r)$.
\end{theorem}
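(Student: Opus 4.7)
The plan is to derive the cover from a low tree-depth decomposition at a suitable parameter $p=p(r)$ (linear in $r$ will do). By Theorem~\ref{thm:chiBE}, the bounded expansion assumption yields a constant $N=N(p,\mathcal{C})$ and, for every $G\in\mathcal{C}$, a coloring $\phi\colon V(G)\to[N]$ in which every subset of at most $p$ colors induces a subgraph of tree-depth at most $p$.

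Next I would use $\phi$ to build a linear order $\preceq$ on $V(G)$: list vertices of color $1$ first, then color $2$, and so on, breaking ties inside a color class according to an elimination tree realising the tree-depth of a $p$-color subset containing it. For this order, define the \emph{weakly reachable set} $\mathrm{WR}_r(v):=\{u\colon \text{some walk of length }\leq r\text{ from }v\text{ to }u\text{ has all internal vertices strictly }\succ v\}$. The centered structure inherited from $\phi$ should force $|\mathrm{WR}_r(v)|\leq g(r,N)$ for some explicit $g$: any such walk visits at most $r+1$ colors and so lives in a subgraph of tree-depth $\leq p$ whose elimination forest caps the number of possible destinations reachable from $v$.

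For each vertex $v$ let $\alpha(v):=\min_\preceq \mathrm{WR}_{2r}(v)$ be its \emph{anchor}, and for each $u\in V(G)$ set
\[
X_u \;:=\; \bigcup_{v\colon \alpha(v)=u}\pi_{v\to u},
\]
where $\pi_{v\to u}$ is a fixed shortest $v$--$u$ walk of length $\leq 2r$ witnessing $u\in\mathrm{WR}_{2r}(v)$. Each $X_u$ is connected (all its pieces share the endpoint $u$) and has radius $\leq 2r$ from $u$. A vertex $w$ belongs to $X_u$ only if a walk of length $\leq 2r$ from $w$ terminating in $u$ is consistent with the anchor condition, which pins $u$ to a weakly reachable set of size at most $|\mathrm{WR}_{2r}(w)|\leq g(2r,N)=:f(r)$, giving the desired degree bound.

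The main obstacle is the cover property: for every vertex $v$ the entire ball $N_r(v)$ must lie in a single $X_u$. The natural candidate is $u=\alpha(v)$, and the argument one wants is that for any $w\in N_r(v)$, concatenating a $v$--$w$ path of length $\leq r$ with the $v$--$\alpha(v)$ walk of length $\leq 2r$ realises $\alpha(v)\in \mathrm{WR}_{2r}(w)$ (after pruning and a minimality argument in $\preceq$), whence $\alpha(w)=\alpha(v)$ and $w\in X_u$. The $r\mapsto 2r$ doubling is precisely what provides the slack needed for anchors inside an $r$-ball to coincide. The other technical input is the weak-coloring-number bound $|\mathrm{WR}_r(v)|\leq g(r,N)$ deduced from Theorem~\ref{thm:chiBE}; this is in the spirit of Zhu's analysis mentioned after Theorem~\ref{thm:chibound} and is best isolated as a preliminary lemma before executing the cluster construction.
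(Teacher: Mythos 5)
The paper does not prove this theorem; it cites it from Grohe--Kreutzer--Siebertz, and your overall strategy (bound the weak $2r$-colouring number on a bounded expansion class, then build clusters from weakly reachable sets) is indeed the strategy of the cited proof. However, your cluster construction has a genuine gap: the cover property fails. Assigning each $v$ a single anchor $\alpha(v)=\min_{\preceq}\mathrm{WR}_{2r}(v)$ and letting $X_u$ collect only the witnessing walks of vertices anchored at $u$ makes the clusters too small. Concretely, take the path $a-v-w-b$ with order $a\prec b\prec v\prec w$ and $r=1$: with your definition of weak reachability (internal vertices strictly $\succ$ the start), the walk $w-v-a$ is disqualified because $v\prec w$, so $a\notin\mathrm{WR}_2(w)$; hence $\alpha(v)=a$ while $\alpha(w)=b$, and no cluster contains $N_1[v]=\{a,v,w\}$. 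Your claim that $\alpha(v)\in\mathrm{WR}_{2r}(w)$ would force $\alpha(w)=\alpha(v)$ is also a non sequitur: at best it gives $\alpha(w)\preceq\alpha(v)$. The correct construction places $z$ into $X_u$ for \emph{every} $u$ weakly reachable from $z$ within distance $2r$ (so $X_u=\{z: u\in\mathrm{WR}_{2r}(z)\}$), and takes as the covering cluster for $N_r[v]$ the one indexed by $u:=\min_{\preceq}N_r[v]$, the minimum over the actual ball rather than over a weakly reachable set: for any $w\in N_r[v]$ the concatenated walk $w\to v\to u$ has length at most $2r$ and all its vertices lie in $N_r[v]$, hence are $\succeq u$. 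For this to work one must also use the standard notion of weak reachability --- $u$ is the $\preceq$-minimum of the \emph{entire} path --- not ``internal vertices $\succ$ the start vertex''; with the standard notion the degree bound survives unchanged, since $z$ lies in at most $|\mathrm{WR}_{2r}(z)|$ clusters, and connectivity and radius of $X_u$ follow because every suffix of a witnessing path again witnesses weak reachability of $u$.

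The second gap is the preliminary lemma that $\sup_{G\in\mathcal C}|\mathrm{WR}_{2r}(\cdot)|$ is bounded on a bounded expansion class. Your proposed derivation from Theorem~\ref{thm:chiBE} --- order by colour class and break ties ``according to an elimination tree realising the tree-depth of a $p$-colour subset containing it'' --- is not well defined: a vertex belongs to many $p$-colour subsets, and their elimination forests need not order the vertices consistently, so the claimed bound on $|\mathrm{WR}_r(v)|$ does not follow as stated. This lemma is precisely Zhu's theorem on generalized colouring numbers (the reference given after Theorem~\ref{thm:chibound}): a class has bounded expansion if and only if its weak $k$-colouring numbers are uniformly bounded for every $k$. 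Citing that result directly, rather than attempting to re-derive it from the low tree-depth decomposition, closes this gap and is in fact how the cited proof proceeds.
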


In order to prove the converse statement, we shall need the following result of K\"uhn and Osthus \cite{K`uhn2004}:
\begin{theorem}
\label{thm:ko}
For every $k$ there exists $d = d(k)$ such that every graph of average degree at least $d$ contains a subgraph of average degree at least $k$ whose girth is at least six.
\end{theorem}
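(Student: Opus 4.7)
The plan is to reduce to the bipartite case and then kill $4$-cycles by random sparsification. In a bipartite graph odd cycles are absent for free, so ``girth at least six'' reduces to the single condition of being $C_4$-free.

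First I would pass from $G$ to a bipartite subgraph carrying at least half the edges (random side assignment), then iteratively delete vertices of degree below $d/8$, obtaining a bipartite graph $G_1$ of minimum degree $\geq d/8$ on $n := |V(G_1)|$ vertices and with at least $dn/16$ edges.

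Second, I would retain each vertex of $G_1$ independently with probability $p$, obtaining an induced subgraph $H := G_1[V']$. By linearity of expectation,
$$
\E[\,|V(H)|\,]=pn,\qquad \E[\,e(H)\,]\geq p^2\,dn/16,\qquad \E[\,c_4(H)\,]\leq p^4\, c_4(G_1),
$$
where $c_4(\cdot)$ denotes the number of $4$-cycles. Deleting one edge per surviving $4$-cycle of $H$ gives a $C_4$-free subgraph $H'$ of expected edge count at least $p^2 dn/16 - p^4 c_4(G_1)$; choosing $p$ to balance these two terms while forcing $\E[e(H')]\geq k\cdot \E[|V(H)|]$ yields the desired subgraph, provided $d$ is sufficiently large relative to $k$ and to the ratio $c_4(G_1)/n$.

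The main obstacle is controlling $c_4(G_1)$. A priori it can be as large as $\Theta(n^4)$ (for example in $K_{n/2,n/2}$), defeating the naive parameter choice. I would handle this by a density-increment loop: if $c_4(G_1)$ is much larger than what the sparsification tolerates, then by convexity averaged over unordered pairs, there exist two vertices sharing a very large common neighbourhood, in the spirit of the K\H{o}v\'ari--S\'os--Tur\'an bound ${\rm ex}(n,C_4)=O(n^{3/2})$; passing to a suitably chosen subgraph on this common neighbourhood strictly decreases the ratio of $C_4$s to edges while preserving a comparable average degree. The loop terminates after a number of steps bounded in terms of $k$, after which the sparsification step succeeds and yields a $C_4$-free subgraph of average degree $\geq k$. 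Making this iteration quantitatively tight, and in particular verifying that each increment step genuinely makes progress and cannot be repeated indefinitely, is the technically subtle point of the argument; I expect the resulting function $d(k)$ to be polynomial in $k$.
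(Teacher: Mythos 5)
The paper offers no proof of this statement: it is quoted from K\"uhn and Osthus \cite{K`uhn2004} as an external ingredient, so there is no internal argument to compare against. Judged on its own, your sketch has a genuine gap precisely at the point you yourself flag as ``technically subtle'', and that point is the entire content of the theorem. The reduction to the bipartite case and the vertex-sparsification computation are fine, but they only succeed when $c_4(G_1)$ is at most roughly $e(G_1)^3/(k^2n^2)$; for the graphs that actually cause trouble (anything resembling $K_{m,m}$, which your minimum-degree cleaning does nothing to exclude) the number of $4$-cycles is governed by the squares of the codegrees and no choice of $p$ balances the two terms, as your own computation with $K_{n/2,n/2}$ shows.

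The density-increment loop does not repair this. In a bipartite graph the common neighbourhood of two vertices lies entirely on one side and induces an \emph{empty} graph, so ``passing to a subgraph on this common neighbourhood'' preserves no edges; what a large codegree actually yields is a dense complete-bipartite-like piece between a set of high-codegree vertices and their common neighbourhood, and such pieces are exactly the extremal configurations for the $4$-cycle count. Thus the quantity you want to drive down (the ratio of $C_4$'s to edges) tends to \emph{increase} along the proposed step, and you supply no potential function showing the loop makes progress or terminates. This is why the theorem is genuinely hard: the proof of K\"uhn and Osthus is a considerably more involved structural argument with a far-from-polynomial bound on $d(k)$, and whether $d(k)$ can be taken polynomial in $k$ was left open in their paper and resolved only much later by Montgomery, Pokrovskiy and Sudakov. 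The closing expectation of your sketch --- that this elementary scheme gives a polynomial $d(k)$ --- should itself have been a warning sign that a key idea is missing.
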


We are now ready to turn Theorem~\ref{thm:1b} into a characterization theorem of classes with bounded expansion.

\begin{theorem}
Let $\mathcal{C}$  be an infinite monotone class of graphs. Then
$\mathcal C$ has bounded expansion if and only if, for every integer $r$ it holds
$$\sup_{G\in\mathcal{C}} \tau_r(G)<\infty.$$
\end{theorem}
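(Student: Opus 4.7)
The forward direction is immediate from Theorem~\ref{thm:1b}. For the converse, I would argue by contrapositive: assuming $\mathcal C$ does not have bounded expansion, I will exhibit an integer $r$ for which $\tau_r$ is unbounded on $\mathcal C$. The plan has two stages. First, use monotonicity to extract from $\mathcal C$ a well-structured witness family, namely exact $s^*$-subdivisions (for a fixed $s^*$) of bipartite graphs of girth at least $6$ with arbitrarily large average degree. Second, show that no $r$-neighborhood cover of such a witness, with $r := s^*+1$, can have bounded maximum degree.

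For the extraction I would chain the following standard reductions, using monotonicity of $\mathcal C$ at each step. By Theorem~\ref{thm:BE} there is an integer $p$ such that $\trdens{p}$ is unbounded on $\mathcal C$, so for every $d$ the class $\mathcal C$ contains a subgraph that is a subdivision (with at most $2p$ subdivisions per edge) of some graph of average degree at least $d$. Theorem~\ref{thm:ko} lets me refine the underlying graph to a subgraph of girth at least $6$ while keeping the average degree as large as desired; extracting a bipartite subgraph costs only a factor of $2$ in average degree; and a pigeonhole over the $2p+1$ possible subdivision lengths fixes a uniform value $s^*\in\{0,\dots,2p\}$ such that for every $k$ one finds a bipartite graph $H$ of girth at least $6$ and average degree at least $k$ whose exact $s^*$-subdivision $H^{(s^*)}$ belongs to $\mathcal C$.

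Now put $r := s^*+1$ and assume for contradiction that $\tau_r(G)\leq\Delta$ for every $G\in\mathcal C$. Take such an $H$ with average degree larger than $2\Delta$, set $G := H^{(s^*)}$, and fix an $r$-neighborhood cover $\mathcal X$ of $G$ of radius at most $2r$ and maximum degree at most $\Delta$. For each $u\in V(H)$ select a cluster $\phi(u)\in\mathcal X$ containing $N_r^G(u)$; since $H$-adjacent vertices are at $G$-distance exactly $s^*+1=r$, one has $N_H[u]\subseteq\phi(u)$. Set $B_X := X\cap V(H)$. The key structural observation is that $B_X\subseteq N_2^H(c)$ for some $c\in V(H)$: using $d_G(u,v)=(s^*+1)\,d_H(u,v)$ for $u,v\in V(H)$ and the inclusion $X\subseteq N_{2r}^G(c_X)$ for the center $c_X$ of $X$, a short case analysis (whether $c_X$ is a branch vertex or a subdivision vertex lying on some edge $\{a,b\}\in E(H)$) shows that every $w\in B_X$ satisfies $d_H(w,c)\leq 2$ with $c=c_X$ or $c\in\{a,b\}$. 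Because $H$ is bipartite with girth at least $6$, the ball $N_2^H(c)$ induces a tree in $H$ (BFS from $c$; any non-tree edge would close a $C_4$), so $H[B_X]$ is a forest with at most $|B_X|-1$ edges. Since every $\{u,v\}\in E(H)$ lies in $B_{\phi(u)}$, a double count yields
\[
|E(H)| \;\leq\; \sum_{X\in\mathcal X} e(H[B_X]) \;\leq\; \sum_{X\in\mathcal X} |B_X| \;\leq\; \Delta\cdot|V(H)|,
\]
forcing the average degree of $H$ to be at most $2\Delta$, a contradiction.

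The main technical hurdle is the extraction stage, which must turn an arbitrary shallow topological minor witness into an exact subdivision of a bipartite, girth-$\geq 6$ graph still lying in $\mathcal C$; here monotonicity is essential and nothing else in the hypotheses can replace it. Once this is in hand the count above is short, as the small cluster radius combined with the girth condition forces the branch-vertex projection of each cluster to induce a forest, which supplies precisely the sparsity needed for the double-count.
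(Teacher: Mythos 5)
Your proof is correct and follows the same overall route as the paper: the forward direction is quoted from Theorem~\ref{thm:1b}, and the converse extracts, via K\"uhn--Osthus (Theorem~\ref{thm:ko}) and monotonicity, subdivisions of girth-$\geq 6$ graphs of unbounded average degree and then shows a small-radius cover of bounded maximum degree cannot exist for these. The differences are in the two technical steps. First, your extraction is more careful than the paper's: the paper simply asserts that $\mathcal C$ contains the exact $p$-th subdivision of graphs $G_n$ of unbounded average degree, whereas you justify the passage from ``$\leq 2p$ subdivisions per edge'' to an exact $s^*$-subdivision by pigeonholing on subdivision lengths (losing only a factor $2p+1$ in density); the bipartiteness you also arrange is harmless but redundant, since girth $\geq 6$ already excludes $C_3$, $C_4$, $C_5$ and that is all your ball argument needs. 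Second, for the final contradiction the paper factors the argument through two lemmas --- the projection step of Lemma~\ref{lem:2}, giving $\tau_{p+1}(H_d')\geq\tau_1(H_d)$, followed by the orientation argument of Lemma~\ref{lem:1}, giving $\tau_1(H_d)\geq\nabla_0(H_d)$ for girth $\geq 5$ --- while you merge both into a single self-contained double count: each cluster meets the branch vertices in a subset of a $2$-ball of $H$, which induces a forest by the girth condition, so $\|H\|\leq\sum_X |B_X|\leq\Delta\,|V(H)|$. The two counts are morally equivalent (the paper charges each edge to an in-neighbour per cluster, you charge it to a forest per cluster), but yours avoids invoking Lemmas~\ref{lem:1} and~\ref{lem:2} at the modest cost of requiring girth $6$ rather than $5$, which K\"uhn--Osthus supplies anyway. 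Both versions are sound.
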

\begin{proof}
One direction follows from Theorem~\ref{thm:1b}. For the other direction, assume that the class $\mathcal C$ does not have bounded expansion. Then there exists an integer $p$ such that
for every integer $n$, $\mathcal C$ contains the $p$-th subdivision
 of a graph $G_n$  with average degree  at least $n$.

Let $d\in\bbbn$. According to Theorem~\ref{thm:ko}, there exists $N(d)$ such that every graph with average degree at least $N(d)$ contains a subgraph of girth $6$ and average degree at least $d$.
We deduce that $\mathcal C$ contains the $p$-th subdivision
$H_d'$ of a graph $H_d$  with girth at least $6$ and average degree at least $d$. As in the proof of Theorem~\ref{thm:2}, we get
$$\sup_{G\in\mathcal{C}} \tau_{p+1}(G)
\geq \sup_{d} \tau_{p+1}(H_d')
\geq \sup_{d}\tau_{1}(H_d)
\geq \sup_{d}\frac{\|H_d\|}{|H_d|}=\infty.$$
\end{proof}

Also, similar statements exist for nowhere dense classes:

\begin{theorem}
A hereditary class $\mathcal C$ is nowhere dense if there exists a
function $f$ such that for every integer $t$ and every $\epsilon>0$, every graph $G\in\mathcal C$ of order $n\geq f(t,\epsilon)$ has a
covering $C_1,\dots, C_k$ of its vertex set such that
\begin{itemize}
\item each $C_i$ induces a connected subgraph with tree-depth at most $t$;
\item every vertex belongs to at most $n^\epsilon$ clusters;
\item every connected subgraph of order at most $t$ is included in at least one cluster.
\end{itemize}
\end{theorem}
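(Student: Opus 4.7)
The plan is to prove both implications, the backward direction (covering implies nowhere dense) being the main content, which I would argue by contradiction following the template of the preceding theorem's proof.

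For the forward direction (nowhere dense implies covering), I would invoke Theorem~\ref{thm:chiND}, giving $\chi_t(G)=o(\log|G|)$ uniformly over $G\in\mathcal{C}$. Given $t$ and $\epsilon>0$, fix a low tree-depth decomposition of $G$ using $N=\chi_t(G)$ colors, and take as clusters the connected components of the subgraphs induced by each $t$-subset of colors. Each cluster has tree-depth at most $t$; each connected subgraph of order $\leq t$ uses at most $t$ colors and so lies in one such cluster; and each vertex lies in at most $\binom{N-1}{t-1}\leq N^{t-1}=o((\log n)^{t-1})$ clusters, which is at most $n^\epsilon$ once $n\geq f(t,\epsilon)$ for a suitable $f$.

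For the backward direction, suppose $\mathcal{C}$ satisfies the covering hypothesis but is somewhere dense. Fix $p$ with $\mathcal{C}\shtm p$ equal to the class of all graphs. For every $d\geq 2$, some $G_d\in\mathcal{C}$ contains, as a subgraph, a subdivision $H_d'$ of $K_{d,d}$ in which every edge has been subdivided at most $2p$ times; by hereditariness, $G_d':=G_d[V(H_d')]\in\mathcal{C}$ and $n:=|G_d'|\leq 2d+2pd^2\leq 3pd^2$. Choose $\epsilon=1/3$ and $t=2(p+1)$. For $d$ large enough, $n\geq f(t,\epsilon)$, and the covering hypothesis yields clusters $C_1,\dots,C_k$ of $V(G_d')$ with the three stated properties. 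Because tree-depth is monotone under the subgraph relation and $H_d'\subseteq G_d'$, the same sets $C_i$ viewed inside $H_d'$ also have tree-depth at most $t$, and every subdivision path of $H_d'$ (of order $\leq 2p+2=t$, connected already in $G_d'$) lies in some $C_i$. Reproducing the star-extraction argument from the proof of the preceding theorem then yields a covering of $V(K_{d,d})$ by sets $C_j'$, each inducing a star in $K_{d,d}$, such that every edge is covered exactly once and every vertex of $K_{d,d}$ lies in at most $n^\epsilon$ star clusters. Orient each edge of $K_{d,d}$ from the center to the leaf of its covering star: every vertex has indegree at most $n^\epsilon$, so $d^2=|E(K_{d,d})|\leq 2d\cdot n^\epsilon\leq 2d\,(3pd^2)^{1/3}$, whence $d^3\leq 24p\,d^2$, i.e.\ $d\leq 24p$. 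For $d$ sufficiently large in terms of $p$ this is a contradiction.

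The main obstacle is the star-extraction step, whose details are elided in the proof of the preceding theorem. I would need to verify that the construction still goes through when the ambient graph is the hereditary closure $G_d'$ (which may carry edges absent from $H_d'$) rather than $H_d'$ itself, and that no factor is lost in the number of clusters containing any given vertex of $K_{d,d}$. Both points reduce to the subgraph monotonicity of tree-depth, which transports any tree-depth bound on a cluster in $G_d'$ to the same bound inside $H_d'$; the vertex-multiplicity bound is preserved automatically because every extracted star cluster is contained in one of the original clusters.
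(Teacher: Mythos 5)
Your forward direction matches the paper's: both reduce to Theorem~\ref{thm:chiND}, and your explicit passage from a low tree-depth decomposition to a cover (components of $t$-subsets of colour classes, multiplicity $\binom{N-1}{t-1}=o((\log n)^{t-1})\leq n^{\epsilon}$) is exactly the intended argument. For the main, backward direction you take a genuinely different route. The paper also passes (via hereditariness) to the graph spanned by a bounded-depth subdivision of a clique, but then sets $t=3p+3$ and counts \emph{subdivided triangles}: each of the $\binom{n}{3}$ of them lies in some cluster, only about $n^{1+\epsilon}$ clusters meet a principal vertex, so a single cluster carries at least $n^{2-\epsilon}$ triangles; contracting the subdivision paths inside that one cluster yields a minor of order at most $n$ with tree-depth at most $t$, hence $t$-degenerate, hence with at most $\binom{t}{2}n$ triangles --- a contradiction. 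This needs only pigeonhole and an elementary degeneracy bound, and in particular it avoids the star-extraction step entirely. You instead reduce to the orientation argument of the preceding (bounded expansion) theorem applied to a subdivided $K_{d,d}$; your arithmetic ($d^2\leq 2d\,n^{\epsilon}$ with $n=O(pd^2)$ and $\epsilon=1/3$) is correct and would close the argument.

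The step you flag as the main obstacle is, however, a genuine gap, and the justification you give for it is not right. A cluster of tree-depth at most $2(p+1)$ in the subdivision can carry many edges of $K_{d,d}$ that do not form a star (already the union of the subdivision paths of a $P_4$ of $K_{d,d}$ has tree-depth far below $2(p+1)$), so the extraction must split one cluster into several stars, and the claim that ``the vertex-multiplicity bound is preserved automatically because every extracted star cluster is contained in one of the original clusters'' does not follow: a vertex may lie in many stars extracted from the \emph{same} cluster, and subgraph monotonicity of tree-depth says nothing about this. What saves the argument is that the contracted cluster has tree-depth at most $t$, hence bounded arboricity, hence star arboricity at most $2(t-1)$; so each vertex lies in at most $2(t-1)$ stars per cluster and the multiplicity only degrades to $2(t-1)n^{\epsilon}$, which your inequality absorbs since $t$ depends only on $p$. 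You need to supply this (or an equivalent degeneracy/orientation argument) explicitly; with that repair your proof is correct, but note that the paper's triangle-counting route sidesteps the issue altogether.
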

\begin{proof}
One direction directly follows from Theorem~\ref{thm:chiND}. For the reverse direction, assume that $\mathcal C$ is not nowhere dense. Then there exists $p$ such that for every $n\in\bbbn$, the class $\mathcal C$ contains a graph
$G_n$ having the $p$-th subdivision of $K_n$ as the spanning subgraph.
Assume that a covering exists for $t=3p+3$. Then every $p$-subdivided triangle of $K_n$ is included in some cluster.
As the $p$-subdivided $K_n$ includes $\binom{n}{3}$ triangles, and
as there are at most $n^{1+\epsilon}$ clusters including some principal vertex of the subdivided $K_n$ (which is necessary to include some subdivided triangle), some cluster $C$ includes at least
$n^{2-\epsilon}$ triangles. It follows that the subgraph induced by $C$ has a minor $H$ of order at most $n$ with at least $n^{2-\epsilon}$ triangles. However, as tree-depth is minor monotone, the graph $H$ has tree-depth at most $t$ hence is $t$-degenerate thus cannot contain more than $\binom{t}{2}n$ triangles. Whence we are led to a contradiction if $n>\binom{t}{2}^{\frac{1}{1-\epsilon}}$.
\end{proof}

\begin{theorem}[ \cite{Grohe2013}]
\label{thm:1}
Let $\mathcal{C}$ be a nowhere dense class of graphs. Then there is a function $f$ such that for all $r\in\bbbn$ and $\epsilon>0$ and all graphs $G\in\mathcal{C}$ with $n\geq f(r,\epsilon)$ vertices, it holds
$\tau_r(G)\leq n^\epsilon$.
\end{theorem}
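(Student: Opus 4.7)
The plan is to derive this statement from Theorem~\ref{thm:chiND} in direct parallel with the way Theorem~\ref{thm:1b} is obtained from Theorem~\ref{thm:chiBE}, replacing the uniform bound on $\chi_p$ by the sub-polynomial bound available on nowhere dense classes. The bridge from the global low tree-depth decompositions to local $r$-neighborhood covers is furnished by the generalized (weak) coloring numbers $\mathrm{wcol}_r(G)$ of Kierstead and Yang \cite{Kierstead2003}, already invoked after Theorem~\ref{thm:chibound}.

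Fix $r\in\bbbn$ and $\epsilon>0$, and set $p=2r+1$. First I would invoke Zhu's inequality from \cite{Zhu2008}, which furnishes a function $h$ such that $\mathrm{wcol}_r(G)\leq \chi_p(G)^{h(r)}$ for every graph $G$. Applying Theorem~\ref{thm:chiND} with parameter $p$ and tolerance $\delta:=\epsilon/h(r)$ produces a threshold $f(r,\epsilon)$ with the property that every $G\in\mathcal{C}$ on $n\geq f(r,\epsilon)$ vertices admits a linear order $\sigma$ of $V(G)$ witnessing
$$
\mathrm{wcol}_r(G)\;=\;\max_{u\in V(G)}\bigl|\mathrm{WReach}_r[\sigma,u]\bigr|\;\leq\;n^{\epsilon},
$$
where $\mathrm{WReach}_r[\sigma,u]$ denotes the set of $w\leq_\sigma u$ reachable from $u$ by a path of length at most $r$ whose internal vertices are all $\geq_\sigma u$.

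Next I would assemble the cover by the recipe of \cite{Grohe2013}: for each $v\in V(G)$ set $C_v=\bigcup\{N_r(u)\colon v\in\mathrm{WReach}_r[\sigma,u]\}$. Each $C_v$ is connected (all the sets in the union contain $v$) and has radius at most $2r$ about $v$ by path concatenation: any $w\in C_v$ lies at distance $\leq r$ from some $u$, which lies at distance $\leq r$ from $v$. Every $r$-neighborhood $N_r(u)$ is contained in $C_v$ provided $v\in\mathrm{WReach}_r[\sigma,u]$, which can be guaranteed by choosing $v$ to be the $\sigma$-minimum vertex on an appropriate breadth-first tree of depth $r$ rooted at $u$. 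Finally, a vertex $u$ lies in $C_v$ only if $v$ belongs to the weakly reachable set of some vertex close to $u$, so the number of clusters containing $u$ is bounded by $\mathrm{wcol}_r(G)\leq n^\epsilon$ up to a constant absorbable in the choice of $\delta$.

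The main obstacle is quantitative rather than conceptual. The exponent $h(r)$ relating $\mathrm{wcol}_r$ to $\chi_p$ must be explicit enough that the choice $\delta=\epsilon/h(r)$ still meets the hypotheses of Theorem~\ref{thm:chiND}, and the threshold $f(r,\epsilon)$ must absorb the implicit constants of that theorem. The combinatorial verification of the three properties of $\{C_v\}$ — connectedness, radius at most $2r$, and coverage of every $r$-neighborhood — is essentially mechanical once the ordering $\sigma$ is in hand, but the precise path-rerouting argument used to select a central vertex $v$ for each $u$ is the technical heart of \cite{Grohe2013} and is where I expect to spend most care.
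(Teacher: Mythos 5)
The paper does not prove this statement: Theorem~\ref{thm:1} is quoted from Grohe, Kreutzer and Siebertz \cite{Grohe2013} without proof (as is Theorem~\ref{thm:1b}), so there is no internal argument to compare yours against. Your sketch does follow the strategy of \cite{Grohe2013} (weak coloring numbers plus a $\mathrm{WReach}$-based cover), but as written it has two genuine gaps.

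First, the degree bound fails for the clusters you define. With $C_v=\bigcup\{N_r(u)\colon v\in\mathrm{WReach}_r[\sigma,u]\}$, a vertex $w$ lies in $C_v$ whenever $v\in\mathrm{WReach}_r[\sigma,u]$ for \emph{some} $u\in N_r(w)$, so the number of clusters containing $w$ is a priori bounded only by $\sum_{u\in N_r(w)}|\mathrm{WReach}_r[\sigma,u]|$, not by $\mathrm{wcol}_r(G)$; concatenating the $w$--$u$ path with the $u$--$v$ path does not keep all vertices $\geq_\sigma v$, so you cannot fold this into a single $\mathrm{WReach}_{2r}$ set either. The standard repair is to work with $\mathrm{wcol}_{2r}$ and define $X_v=\{w\colon v\in\mathrm{WReach}_{2r}[\sigma,w]\}$: taking $v$ to be the $\sigma$-least vertex of $N_r[u]$ shows $N_r[u]\subseteq X_v$ (the path $w$--$u$--$v$ stays inside $N_r[u]$, hence above $v$), the degree of $w$ is exactly $|\mathrm{WReach}_{2r}[\sigma,w]|\leq\mathrm{wcol}_{2r}(G)$, and connectivity and radius at most $2r$ follow by taking subpaths. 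Second, the inequality you attribute to Zhu goes the wrong way: \cite{Zhu2008} bounds $\chi_p$ in terms of generalized coloring numbers, and the coloring numbers in terms of the grads $\rdens{r}$, not $\mathrm{wcol}_r$ in terms of $\chi_p$. A bound of the form $\mathrm{wcol}_{2r}(G)\leq\chi_{p(r)}(G)^{h(r)}$ can indeed be assembled by passing through the densities, but the natural route within this survey is to start from Theorem~\ref{thm:ND} (so $\rdens{r}(G)\leq n^{\delta}$ for $n$ large) and then apply Zhu's actual inequality to get $\mathrm{wcol}_{2r}(G)\leq n^{\epsilon}$; the detour through Theorem~\ref{thm:chiND} is unnecessary, and the step you need from it is not available off the shelf.
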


In other words, every infinite nowhere dense class of graphs $\mathcal C$ is such that
$$\adjustlimits\sup_{r\in\bbbn}\limsup_{G\in\mathcal{C}}\frac{\log \tau_r(G)}{\log |G|}=0.$$

We shall deduce from this theorem the following characterization of nowhere dense classes of graphs.

\begin{theorem}
\label{thm:2}
Let $\mathcal{C}$  be an infinite monotone class of graphs. Then
$$\adjustlimits\sup_{r\in\bbbn}\limsup_{G\in\mathcal{C}}\frac{\log \tau_r(G)}{\log |G|}$$
is either $0$ if $\mathcal C$ is nowhere dense, at at least $1/3$ if $\mathcal{C}$ is somewhere dense.
\end{theorem}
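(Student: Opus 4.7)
The ``nowhere dense'' implication is immediate from Theorem~\ref{thm:1}: the bound $\tau_r(G)\leq|G|^{\epsilon}$ for $|G|$ large enough forces $\limsup_{G\in\mathcal{C}}\log\tau_r(G)/\log|G|\leq\epsilon$ for every $\epsilon>0$ and every $r$, so the supremum equals $0$.

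For ``somewhere dense'', the plan is to recover inside $\mathcal{C}$ a family of uniform subdivisions of a dense girth-six witness and then bound $\tau_r$ in two steps. First fix $p$ with $\mathcal{C}\shtm p$ containing every graph; by monotonicity, for each $n$, $\mathcal{C}$ contains a subdivision of $K_n$ in which every edge is subdivided at most $2p$ times. Viewing the subdivision count as a $(2p+1)$-edge-coloring of $K_n$, Ramsey's theorem yields monochromatic $K_{m(n)}$ with $m(n)\to\infty$; a pigeonhole step on the color value singles out $s^\star\in\{0,\dots,2p\}$ such that the uniform $s^\star$-subdivision of $K_m$ lies in $\mathcal{C}$ for arbitrarily large $m$, and hence by monotonicity for every $m$. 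Embedding the incidence graph $H_q$ of the projective plane $\mathrm{PG}(2,q)$ (a bipartite $(q+1)$-regular graph of girth six on $\Theta(q^2)$ vertices) into $K_m$ then gives $G_q:=H_q^{(s^\star)}\in\mathcal{C}$, with $|G_q|=\Theta(q^2)$ if $s^\star=0$ and $|G_q|=\Theta(s^\star q^3)$ otherwise.

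With $r=s^\star+1$, I would then prove $\tau_r(G_q)\geq\tau_1(H_q)\geq(q+1)/2$ via two lemmas. The projection lemma asserts that intersecting each cluster of an $r$-neighborhood cover of $G_q$ of radius $\leq 2r$ with $V(H_q)$ yields a $1$-neighborhood cover of $H_q$ of radius $\leq 2$ with the same max-degree bound; uniformity of the subdivision converts $G_q$-distances between principals into $H_q$-distances via the ratio $s^\star+1$, and the delicate case (when the cluster's $G_q$-center is a subdivision vertex) is handled by noting that connectivity of the cluster forces one of the two principals of the containing $H_q$-edge into the cluster, and using that principal as the new center yields an $H_q$-radius of at most $\lfloor(s^\star+2(s^\star+1))/(s^\star+1)\rfloor=2$. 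The girth-six congestion lemma $\tau_1(H)\geq\|H\|/|H|$ (for bipartite $H$ of girth at least six) I would adapt from the star-orientation argument already used in the proof of the preceding theorem: absence of $C_4$ forces any cluster of radius two containing an edge $\{u,v\}$ to be centered at $u$ or $v$; replacing each cluster $X$ by the star $\{c(X)\}\cup(N(c(X))\cap X)$ gives a star-cover of $E(H_q)$ with the same max-degree $\tau$; assigning each edge to one such star and orienting away from its center makes each vertex's in-degree bounded by the number of stars containing it, i.e.\ by $\tau$, and summation yields $\|H_q\|\leq|H_q|\,\tau$.

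Combining the two inequalities gives $\log\tau_r(G_q)/\log|G_q|\to 1/3$ when $s^\star\geq 1$ (and $\to 1/2$ when $s^\star=0$), so $\sup_r\limsup_{G\in\mathcal{C}}\log\tau_r(G)/\log|G|\geq 1/3$, as required. I expect the main obstacle to be the radius analysis in the projection lemma when the cluster's $G_q$-center is a subdivision vertex: the bound $(s^\star+2(s^\star+1))/(s^\star+1)<3$ is tight (were the ratio to reach $3$ the projected cover would no longer be a $1$-cover of $H_q$) and crucially requires exact uniformity of the subdivision, which is precisely why the Ramsey/pigeonhole reduction to a single $s^\star$ cannot be replaced by a direct use of the non-uniform subdivisions provided by the somewhere-dense hypothesis.
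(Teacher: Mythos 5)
Your route is the same as the paper's: the nowhere dense half from Theorem~\ref{thm:1}; for the somewhere dense half, uniform subdivisions of a dense graph of girth at least five inside $\mathcal C$, a projection lemma carrying a $(p+1)$-neighborhood cover of the subdivision down to a $1$-neighborhood cover of radius $2$ of the base graph with the same maximum degree, and a lower bound $\tau_1(H)\geq \|H\|/|H|$ for high-girth $H$. Your Ramsey/pigeonhole extraction of a single uniform subdivision parameter $s^\star$ is a welcome justification of a step the paper states without proof, your projection lemma (including the re-centering at the nearest principal vertex) matches the paper's, and using incidence graphs of projective planes in place of generic girth-$5$ graphs with $\sim n^{3/2}$ edges is an inessential variation.

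There is, however, a genuine flaw in your congestion lemma. The claim that absence of $C_4$ forces a radius-two cluster containing an edge $\{u,v\}$ to be centered at $u$ or $v$ is false: in a girth-six graph, the cluster guaranteed to contain $N_1(u)$ may be centered at any neighbor $c$ of $u$ (every other neighbor $w$ of $u$ is then reached from $c$ only via the path $c\,u\,w$, which creates no $C_3$, $C_4$, or $C_5$). For such a cluster the star $\{c\}\cup(N(c)\cap X)$ contains the vertex $u$ but none of the edges $\{u,w\}$ with $w\neq c$; since the cluster assigned to the other endpoint $w$ may likewise be centered at a third vertex, your star system need not cover $E(H_q)$, and the count $\|H_q\|\leq|H_q|\,\tau$ does not follow. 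The repair is exactly the paper's Lemma~\ref{lem:1}: do not pass to stars. For each edge choose a cluster containing the closed neighborhood of one endpoint, note that the edge then lies on a path of length at most $2$ from the center, orient the edge away from the center along that path, and use girth at least $5$ to check that a vertex receives at most one in-edge per cluster containing it (two in-edges from one cluster would close a $C_3$ or $C_4$ through the center). This gives maximum in-degree at most $\tau_1(H_q)$, hence $\tau_1(H_q)\geq\|H_q\|/|H_q|$, and the rest of your computation, yielding the exponent $1/3$ when $s^\star\geq 1$ and $1/2$ when $s^\star=0$, goes through unchanged.
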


This theorem will directly follow from Theorem~\ref{thm:1} and the following two lemmas.

\begin{lemma}
\label{lem:1}
Let $G$ be a graph of girth at least $5$. Then it holds
$$
\tau_1(G)\geq \nabla_0(G),
$$
where
$$\nabla_0(G)=\max_{H\subseteq G}\frac{\|H\|}{|H|}.$$
\end{lemma}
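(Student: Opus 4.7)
The plan is to run a double-counting argument pitting the clusters of an optimal cover against the edges of a densest subgraph, using the girth hypothesis to impose a tree-like structure on each cluster. Concretely, let $H \subseteq G$ attain $\nabla_0(G) = \|H\|/|V(H)|$ and let $\mathcal{X}$ be a $1$-neighborhood cover of $G$ of radius at most $2$ with $\Delta(\mathcal{X}) = \tau_1(G) =: \tau$, each cluster $X$ taken without loss of generality to be the induced subgraph on its vertex set. For each $v \in V(H)$ I would fix a cluster $\phi(v) \in \mathcal{X}$ with $N_1^G(v) \subseteq V(\phi(v))$; every $H$-edge incident to $v$ then lies in $E(\phi(v))$, so writing $W_X := V(X) \cap V(H)$ and regrouping $\sum_{v \in V(H)} \deg_H(v) = 2\|H\|$ by cluster gives
$$
2\|H\| \;\le\; 2\sum_{X \in \mathcal{X}} \bigl|E(H) \cap E(X)\bigr|.
$$

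The heart of the proof is the bound $|E(H) \cap E(X)| \le |W_X|$ for each cluster. Fix a centre $c$ of $X$ realising $\mathrm{rad}(X) \le 2$, so $V(X) \subseteq B_2^G(c)$. The girth assumption eliminates triangles and $4$-cycles through $c$, forcing each vertex of $L_2(c) := \{u : d_G(u,c) = 2\}$ to have a unique neighbour in $N(c)$; consequently $G[V(X)]$ inherits the structure of a BFS tree rooted at $c$, and for any $W \subseteq V(X)$ the induced subgraph $G[W]$ is a forest. Applied to $W = W_X$ this yields $|E(H) \cap E(X)| \le |E(G[W_X])| \le |W_X|$. Summing over clusters and using $d^{\mathcal{X}}(u) \le \tau$ for every $u \in V(H)$,
$$
\sum_{X \in \mathcal{X}} |W_X| \;=\; \sum_{u \in V(H)} d^{\mathcal{X}}(u) \;\le\; \tau\,|V(H)|,
$$
so chaining the three inequalities gives $\|H\| \le \tau\,|V(H)|$, i.e.\ $\tau \ge \|H\|/|V(H)| = \nabla_0(G)$.

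The main obstacle I anticipate is the tree-structure step, where the girth must be exploited precisely enough to rule out cross-edges inside $B_2^G(c)$ that could inflate $|E(G[W_X])|$ past $|W_X|$ and wreck the final estimate. Without this geometric control the double count only recovers the trivial bound $\tau \ge 1$, so the girth plays the decisive role in breaking the symmetry between the vertex-degree side and the edge side of the cover.
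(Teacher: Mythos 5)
Your overall strategy --- double-count the edges of a densest subgraph $H$ against the clusters of an optimal cover, charging at most $|W_X|$ edges of $H$ to each cluster $X$ and at most $\tau_1(G)$ clusters to each vertex --- is a legitimate variant of the paper's argument. The paper instead builds an \emph{orientation} of $G$: each edge is assigned to a cluster containing the $1$-neighbourhood of one of its ends and oriented away from that cluster's centre, so that every vertex receives at most one incoming edge per cluster containing it, and one then invokes the fact that the maximum indegree of any orientation is at least $\nabla_0(G)$. Both arguments rest on the same structural claim about second neighbourhoods of a centre, and that is exactly where your proof breaks. It is \emph{not} true at girth $5$ that $G[B_2(c)]$ is a forest, nor even that it has at most as many edges as vertices. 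Girth $5$ forbids edges inside $N(c)$, edges between two distance-$2$ vertices sharing a parent, and multiple parents for a distance-$2$ vertex; but it does \emph{not} forbid an edge between two distance-$2$ vertices with distinct parents, since that only closes a $5$-cycle through $c$. The Petersen graph is a concrete counterexample: it has girth $5$ and diameter $2$, so $B_2(c)$ is the whole graph, with $15$ edges on $10$ vertices; taking the single-cluster cover $\mathcal X=\{G\}$ (radius $2$, maximum degree $1$), your bound $|E(H)\cap E(X)|\le |W_X|$ fails badly.

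This is not a repairable slip under the stated hypothesis: the Petersen graph in fact refutes the lemma as printed, since $\tau_1=1$ while $\nabla_0=3/2$. The hypothesis should be girth at least $6$; then every edge of $B_2(c)$ joins consecutive BFS layers and every vertex has a unique parent, so $G[B_2(c)]$ really is the BFS tree, your forest claim holds, and your computation closes --- as does the paper's orientation argument, whose step ``$e$ belongs to a path of length at most $2$ with endpoint $c_i$'' needs the same strengthening for the same reason. Girth $6$ is also what is actually available where the lemma is applied (the K\"uhn--Osthus theorem supplies subgraphs of girth at least six). So: right framework, and you correctly identified the tree-structure step as the crux, but you asserted the decisive geometric fact at girth $5$ when it only holds at girth $6$, and the statement itself needs the stronger hypothesis.
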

\begin{proof}
Let $\mathcal{X}$ be a $1$-neighborhood cover of radius at most $2$ of $G$ with maximum degree $\tau_1(G)$. Let $X_1,\dots,X_k$ be the clusters of $\mathcal{X}$. For an edge $e=\{u,v\}$, let $i\leq k$ be the minimum integer such that $N_1(u)$ or $N_1(v)$ is included in $X_i$. Let $c_i$ be a center of $X_i$. Then $e$ belongs
to a path of length at most $2$ with endpoint $c_i$. We orient $e$ according to the orientation of this path away from $c_i$. Note that by the process, we orient every edge, and that every vertex $v$ gets at most one incoming edge by cluster that contains $v$. Hence
we constructed an orientation of $G$ with maximum degree at most $\tau_1(G)$. As the maximum indegree of an orientation of $G$ is at least $\nabla_0(G)$, we get $\tau_1(G)\geq \nabla_0(G)$.
\end{proof}

We deduce the following
\begin{lemma}
\label{lem:2}
Let $\mathcal{C}$ be a monotone somewhere dense class of graphs. Then
$$\adjustlimits\sup_{r\in\bbbn}\limsup_{G\in\mathcal{C}}\frac{\log \tau_r(G)}{\log |G|}\geq \frac{1}{3}.$$
\end{lemma}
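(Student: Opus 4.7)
The plan is to exhibit, inside $\mathcal C$, an infinite family of graphs $H_q$ of order $\Theta(q^3)$ for which $\tau_r(H_q)=\Omega(q)$ with fixed $r\in\{1,\dots,2p{+}1\}$; this will yield $\log\tau_r(H_q)/\log|H_q|\to 1/3$ and hence the statement. Since $\mathcal C$ is somewhere dense, first fix $p$ such that every $K_n$ is a shallow topological minor at depth $p$ of some graph in $\mathcal C$; by monotonicity, $\mathcal C$ then contains, for every $n$, a graph $G_n$ which is a subdivision of $K_n$ in which every edge is subdivided at most $2p$ times.

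The building block I will use is the incidence graph $\Gamma_q$ of the projective plane $\mathrm{PG}(2,q)$ for a prime power $q$: a $(q{+}1)$-regular bipartite graph of girth $6$ on $2(q^2{+}q{+}1)$ vertices (any family of girth-$6$ graphs saturating the Moore bound would do equally well; this is essential because we need density $\Theta(\sqrt{|V|})$ to reach the exponent $1/3$). For $n$ large, embed $\Gamma_q$ as a subgraph of $K_n$; the subdivided edges inside $G_n$ form a $\leq 2p$-subdivision of $\Gamma_q$, which lies in $\mathcal C$ by monotonicity. Then apply pigeonhole twice: first, over the edges of $\Gamma_q$, to fix a subdivision length $k_q\in\{0,\dots,2p\}$ realized on at least a fraction $1/(2p{+}1)$ of the edges, keeping a subgraph $\Gamma_q^\sharp\subseteq\Gamma_q$ of girth $\geq 6$ and average degree $\geq (q{+}1)/(2p{+}1)$ whose $k_q$-subdivision $H_q$ lies in $\mathcal C$; and second, over $q$, to fix some $k^*\in\{0,\dots,2p\}$ with $k_q=k^*$ for infinitely many prime powers.

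The main technical step, on which the whole argument hinges, is the inequality
\[ \tau_{k^*+1}(H_q)\ \geq\ \tau_1(\Gamma_q^\sharp). \]
Given any $(k^*{+}1)$-neighbourhood cover $\mathcal X'$ of $H_q$ of radius at most $2(k^*{+}1)$, I will show that the family $\mathcal X=\{X'\cap V(\Gamma_q^\sharp):X'\in\mathcal X'\}$ is a $1$-neighbourhood cover of $\Gamma_q^\sharp$ of radius $\leq 2$ and maximum degree at most $\Delta(\mathcal X')$. The inclusion $N_{k^*+1}^{H_q}(v)\supseteq N_1^{\Gamma_q^\sharp}(v)$ for principal $v$ handles the covering condition; connectivity transfers by reading off the principal vertices of any internal $H_q$-path as a $\Gamma_q^\sharp$-walk; and the identity $d_{H_q}(a,b)=(k^*{+}1)\,d_{\Gamma_q^\sharp}(a,b)$ for principal $a,b$ takes care of the radius. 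The main obstacle is the case in which the $H_q$-centre of $X'$ is a subdivision vertex: one promotes it to a principal vertex at $H_q$-distance at most $\lfloor(k^*{+}1)/2\rfloor$, and then the $\Gamma_q^\sharp$-distance from this new centre is bounded by $(\lfloor(k^*{+}1)/2\rfloor+2(k^*{+}1))/(k^*{+}1)<3$, hence by $2$ using integrality of principal-to-principal distances.

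With the claim in hand, Lemma~\ref{lem:1} applied to the girth-$6$ graph $\Gamma_q^\sharp$ gives $\tau_{k^*+1}(H_q)\geq\tau_1(\Gamma_q^\sharp)\geq\nabla_0(\Gamma_q^\sharp)\geq(q{+}1)/(2(2p{+}1))$. On the size side, $|H_q|=|V(\Gamma_q^\sharp)|+k^*\|\Gamma_q^\sharp\|\leq 2(q^2{+}q{+}1)+k^*(q{+}1)(q^2{+}q{+}1)$, which is $\Theta(q^3)$ when $k^*\geq 1$ and $\Theta(q^2)$ when $k^*=0$. Letting $q\to\infty$ through the chosen prime powers gives $\log\tau_{k^*+1}(H_q)/\log|H_q|\to 1/3$ in the first case (and $\to 1/2$ in the second), and since $k^*{+}1\in\{1,\dots,2p{+}1\}$ this forces $\sup_{r\in\bbbn}\limsup_{G\in\mathcal C}\log\tau_r(G)/\log|G|\geq 1/3$, as required.
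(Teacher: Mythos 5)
Your proposal is correct and follows essentially the same route as the paper: both take extremal girth-at-least-$5$ graphs with $\Theta(n^{3/2})$ edges (realized e.g.\ by projective-plane incidence graphs), pass to their $p$-fold subdivisions inside $\mathcal C$, project a $(p+1)$-neighbourhood cover of the subdivision back to a $1$-neighbourhood cover of the base graph by recentering clusters at principal vertices and using integrality of principal-to-principal distances, and conclude via Lemma~\ref{lem:1}. The only deviation is that you handle possible non-uniformity of the subdivision lengths by a pigeonhole over edges and over $q$, where the paper simply invokes the standard fact that a monotone somewhere dense class contains the exact $p$-th subdivisions ${\rm Sub}_p(K_n)$ for some fixed $p$ and all $n$; this costs only a constant factor in density and does not affect the exponent $1/3$.
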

\begin{proof}
A $\mathcal{C}$ is monotone and somewhere dense, there exists integer $p\geq 0$ such that for every $n\in\bbbn$, the $p$-th subdivision ${\rm Sub}_p(K_n)$ of $K_n$ belongs to $\mathcal C$.
For $n\in\bbbn$, let $H_n$ be a graph of girth at least $5$, with order $|H_n|\sim n$ and size $\|H_n\|\sim n^{3/2}$.
If $p=0$, then according to Lemma~\ref{lem:1} it holds
\begin{align*}
\adjustlimits\sup_{r\in\bbbn}\limsup_{G\in\mathcal{C}}\frac{\log \tau_r(G)}{\log |G|}&\geq \limsup_{G\in\mathcal{C}}\frac{\log \tau_1(G)}{\log |G|}\\
&\geq \lim_{n\rightarrow\infty}\frac{\log \nabla_0(H_n)}{\log |H_n|}\\
&\geq \lim_{n\rightarrow\infty}\frac{\log \|H_n\|-\log |H_n|}{\log |H_n|}
= \frac{1}{2}.
\end{align*}
Thus assume $p\geq 1$. Denote by $H_n'$ the $p$-th subdivision
of $H_n$, where we identify $V(H_n)$ with a subset of $V(H_n')$ for convenience. Then $|H_n|\sim pn^{3/2}$.
Let $\mathcal X=\{X_1,\dots,X_k\}$ be a $(p+1)$-neighborhood cover of radius at most $2(p+1)$ of $H_n'$ with maximum degree $\tau_{p+1}(H_n')$.
Let $c_i$ be a center of cluster $X_i$, and let $d_i$ be a vertex of $H_n$ at minimal distance of $c_i$ in $H_n'$. It is easily checked that
there exists a cluster $X_i'$ with center $d_i$ and radius $2(p+1)$ such that $X_i\cap V(H_n)=X_i'\cap V(H_n)$. Define $Y_i=X_i'\cap V(H_n)$. As $\mathcal X$ is a $(p+1)$-neighborhood cover of radius at most $2(p+1)$ of $H_n'$ with maximum degree $\tau_1(H_n')$,
the cover $\mathcal Y=\{Y_i\}$ is a $1$-neighborhood cover of radius $2$ of $H_n$  with maximum degree $\tau_{p+1}(H_n')$.
Hence $\tau_1(H_n)\leq \tau_{p+1}(H_n')$. Thus it holds
\begin{align*}
\adjustlimits\sup_{r\in\bbbn}\limsup_{G\in\mathcal{C}}\frac{\log \tau_r(G)}{\log |G|}
&\geq \lim_{n\rightarrow\infty}\frac{\log \tau_{p+1}(H_n')}{\log |H_n'|}\\
&\geq \lim_{n\rightarrow\infty}\frac{\log \tau_{1}(H_n)}{\log |H_n'|}\\
&\geq \lim_{n\rightarrow\infty}\frac{\log \|H_n\|-\log |H_n|}{\log |H_n'|}=\frac{1}{3}.
\end{align*}
\end{proof}
\section{Algorithmic Applications of Low Tree-Depth Decomposition}
\label{sec:algo}
Theorem~\ref{thm:chibound} has the following algorithmic version.
\begin{theorem}[\cite{Sparsity}]
\label{thm:chialgo}
There exist polynomials $P_p$ (${\rm deg}\,P_p\approx 2^{2^p}$) and an algorithm that computes, for input graph $G$ and integer $p$, a low tree-depth decomposition of $G$ with parameter $p$ using
$N_p(G)$ colors in time $O(N_p(G)\,|G|)$, where
$$
\chi_p(G)\leq N_p(G)\leq P_p(\trdens{2^{p-2}+1}(G)).
$$
\end{theorem}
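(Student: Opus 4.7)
The plan is to make the existence proof of Theorem~\ref{thm:chibound} algorithmic. That proof builds a sequence of \emph{transitive fraternal augmentations} $G = G_0 \subseteq G_1 \subseteq \cdots \subseteq G_k$ with $k = \lceil \log_2 p\rceil$, each equipped with an acyclic orientation of bounded in-degree, in such a way that a proper coloring of $G_k$ is a low tree-depth decomposition of $G$ with parameter $p$. The goal is to implement each step in time $O(|G|)$ and the final coloring in time $O(N_p(G)\cdot|G|)$.

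First, compute an acyclic orientation of $G_0 = G$ with in-degree at most $2\trdens{0}(G)$ by the standard linear-time degeneracy peeling: iteratively remove a vertex of minimum degree (using bucket queues) and orient its residual edges towards it. Then, for $i = 1,\dots,k$, form $G_i$ from the current orientation of $G_{i-1}$ by adding a transitive arc $uw$ for every directed path $u\to v\to w$ and a fraternal edge $uv$ for every pair $u,v$ sharing an out-neighbor, and then recompute a low-in-degree acyclic orientation of $G_i$. Because the maximum in-degree after round $i-1$ is bounded by a constant $c_{i-1}$, each vertex $v$ is the head of at most $c_{i-1}$ new transitive arcs and involved in at most $c_{i-1}^2$ new fraternal edges, so $|E(G_i)| = O(|G|)$ and this round runs in linear time. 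The new in-degree bound $c_i$ is controlled via a density argument in the spirit of Lemma~\ref{lem:degchr}: the fraternal and transitive edges added up to round $i$ correspond to walks of length at most $2^i$ in $G$, so $c_i$ is polynomial in $\trdens{2^{i-1}}(G)$. Inducting on $i$ propagates this polynomial dependence.

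Finally, greedily color $G_k$ by processing vertices in reverse topological order of the orientation and assigning each the smallest color not used by its in-neighbors. This uses $N_p(G)\leq 1+c_k\leq P_p(\trdens{2^{p-2}+1}(G))$ colors, in total time $O(N_p(G)\cdot|G|)$. Correctness of the resulting coloring as a low tree-depth decomposition with parameter $p$ is inherited verbatim from the proof of Theorem~\ref{thm:chibound}: after $k = \lceil\log_2 p\rceil$ augmentations, every connected subgraph of $G$ on at most $p$ vertices induces a clique in $G_k$, so its vertices receive distinct colors, and any union of $q\leq p$ color classes carries a valid elimination forest of depth at most $q$ read off the reverse topological order. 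The main obstacle is the iterated bookkeeping in the augmentation step, that is, controlling how the in-degree bound (and hence the number of augmented edges per vertex) grows through $k = O(\log p)$ rounds. Because each round composes a polynomial in the density at twice the previous depth, the resulting bound on $N_p(G)$ is polynomial in $\trdens{2^{p-2}+1}(G)$ of degree roughly $2^{2^p}$, exactly as claimed.
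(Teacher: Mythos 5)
The paper does not actually prove Theorem~\ref{thm:chialgo} (it is quoted from \cite{Sparsity}), but your framework --- iterated transitive fraternal augmentations with bounded in-degree orientations, followed by a greedy coloring --- is indeed the one the paper attributes to \cite{POMNI} for Theorem~\ref{thm:chibound}, and most of your implementation details (bucket-queue degeneracy peeling, linear work per round because each vertex acquires only $O(c_{i-1}^2)$ new edges) are sound. There are, however, two genuine gaps.

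First, the round count $k=\lceil\log_2 p\rceil$ is wrong, and the mechanism you invoke to justify it (``every connected subgraph of $G$ on at most $p$ vertices induces a clique in $G_k$'') fails. The augmentation rules add nothing for a path $u\leftarrow v\rightarrow w$ whose middle vertex carries two out-arcs, so paths do not halve in length at each round; the actual correctness lemma of \cite{POMNI} is a more delicate induction on the number of color classes, extracting an elimination order from the orientation, and it needs a number of rounds that is linear in $p$ (roughly $p-1$). This is forced by the very bound you are asked to prove: each round at most doubles the length of the walks in $G$ represented by the augmented arcs, so after $i$ rounds the densities are controlled by $\trdens{r}(G)$ with $r\approx 2^{i-1}$, and reaching the stated depth $2^{p-2}+1$ requires $i\approx p-1$ rounds, whereas $\log_2 p$ rounds would only reach depth about $p/2$. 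Your own bookkeeping is internally inconsistent here: composing ``a polynomial in the density at twice the previous depth'' over $\lceil\log_2 p\rceil$ rounds yields depth about $p$, not $2^{p-2}+1$, and cannot produce a polynomial of degree $2^{2^p}$ either.

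Second, you cannot ``recompute a low-in-degree acyclic orientation of $G_i$'' from scratch at each round. A transitive fraternal augmentation requires $\vec G_{i-1}\subseteq\vec G_i$ as \emph{directed} graphs: transitive arcs come with a forced orientation, existing arcs must keep theirs, and only the newly created fraternal edges are free to be oriented (which is done by computing a bounded in-degree orientation of the fraternal graph alone; it has bounded density, so this is again linear time). Reorienting everything would preserve the density bounds but destroy the inductive invariant along the chain of orientations on which the low tree-depth property of the final coloring rests; relatedly, acyclicity need not survive the fraternal step, so the final coloring should be justified via degeneracy (in-degree at most $c_k$ gives $2c_k+1$ colors) rather than via a topological order.
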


It is not surprising that
low tree-depth decompositions have immediately found several algorithmic applications \cite{Taxi_stoc06, POMNII}.

As noticed in \cite{chrobak}, the existence of an orientation of planar graphs with bounded out-degree allows for a planar graph $G$ (once such an orientation has been computed for $G$) an easy $O(1)$ adjacency test, and an enumeration of all the triangles of $G$ in linear time.

 For a fixed pattern $H$,
the problem is to check whether an input graph $G$ has an induced subgraph
isomorphic to $H$ is called the {\em subgraph isomorphism problem}.
This problem is known to have complexity at most
$O(n^{\omega l/3})$ where $l$ is the order of $H$ and where $\omega$ is the
exponent of square matrix fast multiplication algorithm \cite{NP85} (hence
$O(n^{0.792\ l})$ using the fast matrix algorithm of \cite{coppersmith90}). The
particular case of subgraph isomorphism in planar graphs have been studied by
Plehn and Voigt \cite{plehn91}, Alon \cite{alon95} with super-linear bounds and
then by Eppstein \cite{Epp-SODA-95,Epp-JGAA-99} who gave the first linear
time algorithm for fixed pattern $H$ and $G$ planar. This was extended to
graphs with bounded genus in \cite{Epp-Algo-00}.
We further generalized this result to classes with bounded expansion \cite{POMNII}:
\begin{theorem}
\label{thm:count}
There is a function $f$ and an algorithm such that for every input graphs $G$ and $H$, counts the number of occurrences of $H$ is $G$ in time
$$O\bigl(f(H)\,(N_{|H|}(G))^{|H|}\,|G|\bigr),$$ where $N_p(G)$ is the number of colors computed by the algorithm in Theorem~\ref{thm:chialgo}.

In particular, for every fixed bounded expansion class (resp. nowhere dense class) $\mathcal C$ and every fixed pattern $H$, the number of occurrences of $H$ in a graph $G\in\mathcal C$ can be computed in linear time (resp. in time $O(|G|^{1+\epsilon})$ for any fixed $\epsilon>0$).
\end{theorem}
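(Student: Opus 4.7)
The plan is to reduce the counting problem on $G$ to counting on many induced subgraphs of bounded tree-depth, each handled in linear time by dynamic programming on its elimination forest. First, apply the algorithm of Theorem~\ref{thm:chialgo} with parameter $p=|H|$ to produce, in time $O(N\cdot|G|)$ with $N:=N_{|H|}(G)$, a coloring $c\colon V(G)\to[N]$ such that for every $I\subseteq[N]$ with $|I|\leq|H|$ the induced subgraph $G_I:=G[c^{-1}(I)]$ has tree-depth at most $|I|\leq|H|$. Any copy of $H$ in $G$ uses some set $I^*$ of at most $|H|$ colors, so I partition copies by their color set: for each $I$ with $|I|=k\leq|H|$ I count the copies of $H$ in $G_I$ whose image uses all $k$ colors of $I$, and sum. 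There are at most $N^{|H|}$ such sets $I$, and each vertex lies in at most $N^{|H|-1}$ of the $G_I$, so $\sum_I|G_I|\leq |G|\cdot N^{|H|-1}$.

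The core step is counting the color-surjective copies of $H$ in a single $G_I$. For a graph of fixed tree-depth $t$, an elimination forest $Y_I$ of $G_I$ of height at most $|H|$ can be computed in linear time (the recursive characterization TD$4$ turns directly into a linear-time procedure once $t$ is fixed). One then runs a bottom-up dynamic program on $Y_I$: because $G_I$ sits inside the closure of $Y_I$, every edge of $G_I$ joins a node to one of its at most $|H|-1$ ancestors, so the boundary of the subtree rooted at a node $u$ has size at most $|H|$. A DP state at $u$ records a partial embedding of $V(H)$ into this boundary together with which vertices of $V(H)$ have been placed in the subtree so far and which colors of $I$ have been used; the state space has size bounded by a function $f(H)$, so the DP counts color-surjective copies in time $O(f(H)\cdot|G_I|)$. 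This DP is the main technical obstacle: one must combine children's partial counts carefully so that no copy of $H$ is missed or double-counted across sibling subtrees, and the correct notion of state must be isomorphism-invariant relative to the boundary chain.

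Summing over all $I$ gives total running time
\[
O\bigl(N\cdot|G|\bigr)+O\bigl(f(H)\cdot N^{|H|}\cdot|G|\bigr)=O\bigl(f(H)\cdot N^{|H|}\cdot|G|\bigr),
\]
as claimed. For $\mathcal C$ with bounded expansion, Theorem~\ref{thm:chibound} bounds $N=N_{|H|}(G)$ by a constant depending only on $|H|$ and $\mathcal C$, yielding linear time; for $\mathcal C$ nowhere dense, Theorem~\ref{thm:chiND} gives $N=|G|^{o(1)}$, so $N^{|H|}=O(|G|^{\epsilon})$ for any fixed $\epsilon>0$, yielding running time $O(|G|^{1+\epsilon})$.
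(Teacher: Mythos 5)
Your proposal is correct and follows essentially the same route as the proof in the cited source \cite{POMNII} (the survey states Theorem~\ref{thm:count} without reproving it): compute a low tree-depth decomposition with parameter $|H|$ via Theorem~\ref{thm:chialgo}, partition the copies of $H$ by their color sets $I$, and count the color-surjective copies in each $G_I$ by a bottom-up dynamic program on an elimination forest of bounded height. The one inaccuracy is the claim that TD$4$ ``turns directly'' into a linear-time algorithm producing an elimination forest of height exactly $|H|$ (the recursion minimizes over all vertex deletions and is not obviously linear-time); the standard and sufficient fix is to take a DFS forest of $G_I$, which is an elimination forest of height at most $2^{|H|}-2$ by TD$7$, and any height bounded by a function of $|H|$ is all your dynamic program needs, since it only affects $f(H)$.
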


Theorem~\ref{thm:count} can be extended from the subgraph isomorphism problem to first-order model checking.

\begin{theorem}[\cite{DKT2}, see also \cite{Dawar2009}]
Let $\mathcal C$ be a class of graphs with bounded expansion, and
let $\phi$  be a first-order sentence (on the natural language of graphs). There exists a linear time  algorithm that decides whether a graph $G\in\mathcal C$ satisfies $\phi$.
\end{theorem}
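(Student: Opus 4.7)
The plan is to combine Gaifman's locality theorem for first-order logic with the linear-time computation of low tree-depth decompositions from Theorem~\ref{thm:chialgo}, together with the known linear-time algorithm for FO model checking on graphs of bounded tree-depth (Gajarsk\'y and Hlin\v en\'y). First, I would rewrite $\phi$ in Gaifman normal form, expressing it as a Boolean combination of basic local sentences
\[
\exists x_1\cdots\exists x_k\Bigl(\bigwedge_{i<j} d(x_i,x_j)>2r\;\wedge\;\bigwedge_i \psi^{(r)}(x_i)\Bigr),
\]
where each $\psi^{(r)}(x)$ is $r$-local, meaning that its truth at a vertex $v$ depends only on the structure of the $r$-neighbourhood of $v$. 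Both the radius $r$ and the quantifier depths of the $\psi^{(r)}$ are bounded by a function of the quantifier rank of $\phi$. It therefore suffices to (a)~label each vertex $v$ of $G$ by its $r$-local FO-type of bounded depth, and then (b)~decide each basic local sentence from these labels.

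For step~(a), I would first apply Theorem~\ref{thm:1b} to produce in linear time an $r$-neighbourhood cover of bounded maximum degree and bounded radius: each $r$-ball $N_r(v)$ sits inside some cluster $X(v)$, and each vertex belongs to only a bounded number of clusters. I would then run Theorem~\ref{thm:chialgo} on $G$ with parameter $p$ chosen large enough in terms of the quantifier depth of $\psi^{(r)}$ so that any subgraph of $G$ of radius at most $r$ that is contained in some cluster is also contained in an induced subgraph of tree-depth at most $p$. On those bounded-tree-depth pieces I can invoke the Gajarsk\'y--Hlin\v en\'y linear-time FO algorithm to compute the $r$-local FO-type of each vertex in constant amortized time; since each vertex belongs to only a bounded number of clusters, the overall cost stays $O(|G|)$. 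The output is an auxiliary vertex colouring of $G$ by a bounded palette of types, with the property that $\psi^{(r)}(v)$ is determined by the type of $v$.

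For step~(b), once vertices are labelled by types, deciding a basic local sentence reduces to finding $k$ vertices of prescribed types that are pairwise at distance greater than $2r$. Because only boundedly many type combinations occur, the task decouples into constantly many scatter problems, each solvable in linear time on a bounded expansion class: either by a greedy selection guided by the cover of Theorem~\ref{thm:1b}, or equivalently by invoking Theorem~\ref{thm:count} on an auxiliary labelled graph encoding the relation ``same prescribed type and far apart''. Assembling the answers for all basic local sentences through the Boolean combination prescribed by Gaifman normal form then yields the answer for $\phi$ in linear time.

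The main obstacle, and the core of the argument of \cite{DKT2}, is to keep the entire construction inside the realm of bounded expansion throughout the reduction. Each intermediate step enriches the structure with new unary or low-arity predicates (local types, cluster membership, distance markers), and one must verify that the resulting coloured structure still belongs to a bounded expansion class so that subsequent auxiliary computations remain linear; this is precisely where the stability of bounded expansion under transitive fraternal augmentations and under labelling by a bounded number of colours pays off. Calibrating the interaction between the quantifier rank of $\phi$, the locality radius $r$, and the tree-depth parameter $p$ used to approximate the $r$-neighbourhoods, while keeping the hidden constant a computable function of $\phi$, is the delicate combinatorial and algorithmic part of the proof.
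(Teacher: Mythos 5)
This theorem is only cited in the survey (from \cite{DKT2}), so there is no in-paper proof to compare against; measured against the actual argument of \cite{DKT2}, your proposal follows a genuinely different route (Gaifman locality plus neighbourhood covers, which is the strategy of \cite{Grohe2013} for nowhere dense classes) and, as written, it has a gap at its central step. The problem is step~(a). A cluster of an $r$-neighbourhood cover, or an $r$-ball itself, has bounded \emph{radius} but neither bounded order nor bounded tree-depth in a bounded expansion class (already a planar graph of radius $1$ can contain arbitrarily long induced paths, hence has unbounded tree-depth). A low tree-depth decomposition with parameter $p$ only guarantees that connected subgraphs of \emph{order} at most $p$ land in a bounded tree-depth piece; it says nothing about subgraphs of bounded radius. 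Since an $r$-local formula of quantifier rank $q$ may quantify universally over the entire (unboundedly large) $r$-ball, the $r$-local type of $v$ is not determined by the bounded tree-depth induced subgraphs containing $v$, so no choice of $p$ in Theorem~\ref{thm:chialgo} makes your claim ``any subgraph of radius at most $r$ contained in a cluster is contained in an induced subgraph of tree-depth at most $p$'' true. Computing the local type of $v$ inside its cluster is itself an instance of FO model checking on a graph from a bounded expansion class, so the reduction is circular; this is exactly the obstacle that forces \cite{Grohe2013} to set up a recursion on the clusters via the splitter game in the nowhere dense setting. A secondary issue: in step~(b) the auxiliary ``same type and far apart'' graph is essentially the complement of a power of $G$ and is dense, so Theorem~\ref{thm:count} does not apply to it; the scattered-set subproblem needs a separate (domination-based) argument.

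The proof of \cite{DKT2} avoids Gaifman locality entirely: it performs a quantifier elimination over a structure enriched by iterated transitive fraternal augmentations (equivalently, over the tree-depth colouring together with the witnessing elimination forests), replacing an innermost quantifier by new bounded-arity relations while keeping the augmented structure inside a bounded expansion class. Your closing paragraph correctly identifies that preserving bounded expansion under augmentation is the crux, but the locality-based skeleton you build around it does not go through without the missing recursive treatment of the clusters; either adopt the quantifier-elimination route of \cite{DKT2}, or the full locality machinery of \cite{Grohe2013}.
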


The above theorem relies on low tree-depth decomposition. However, the next result, due to Kazana and Segoufin, is based on the notion of transitive fraternal augmentation, which was introduced in \cite{POMNI} to prove Theorem~\ref{thm:chibound}.

\begin{theorem}[\cite{Kazana2013}]
\label{thm:Kazana}
 Let $\mathcal C$ be a class of graphs with bounded expansion and let $\phi$ be a first-order formula. Then, for all $G\in\mathcal C$, we can compute the number $|\phi(G)|$ of satisfying assignements for $\phi$ in $G$ in  in time $O(|G|)$.

 Moreover, the set $\phi(G)$ can be enumerated in lexicographic order in constant time between consecutive outputs and linear time preprocessing time.
\end{theorem}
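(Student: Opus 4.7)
The plan is to transform the problem into one on an augmented directed graph $G^+$ of bounded in-degree on which $\phi$ can be evaluated locally, and then to handle counting and enumeration by aggregation and indexing. Let $q$ be the quantifier rank of $\phi$. I would iteratively apply the transitive fraternal augmentation of \cite{POMNI} to $G$ until the resulting coloured digraph $G^+$ encodes every $q$-type of a tuple of $G$ via bounded-size in-neighbourhoods. Bounded expansion guarantees that $G^+$ has in-degrees bounded by some constant $d=d(q,\mathcal C)$ and size $|E(G^+)|=O(|G|)$, and that $G^+$ can be computed in linear time; the same ingredient is already at the heart of Theorem~\ref{thm:chibound}.

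With $G^+$ in hand, the next step is a quantifier-elimination lemma: every first-order formula $\phi(\bar x)$ of rank at most $q$ is equivalent, over $G$, to a boolean combination $\widetilde\phi(\bar x)$ of atomic statements about $G^+$ in which every quantifier ranges only over the in-neighbourhood of some free variable. Since each such in-neighbourhood has constant size, $\widetilde\phi$ can be evaluated on any tuple in constant time. For the counting part of the theorem, I would then perform a single sweep of the vertices in an arbitrary order, maintaining dynamic-programming tables of the following form: for each vertex $v$ and each atomic type $\tau$ of a partial tuple ending at $v$, the number of extensions of that partial tuple to a full satisfying assignment. Each table entry is computable in $O(1)$ amortised time from the entries at its in-neighbours, and the desired total $|\phi(G)|$ is obtained by summing the appropriate counters at the end. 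The running time is $O(|G|)$.

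For the enumeration part, the strategy is to combine the augmented structure with the constant-delay enumeration toolbox developed by Bagan and by Durand--Grandjean. During preprocessing one organises the satisfying tuples into a trie whose branches are indexed by the free variables taken in lexicographic order, and at every internal node one precomputes a pointer to its first non-empty descendant, so that the enumeration traversal never wastes more than $O(1)$ steps between consecutive outputs. I expect the main obstacle of the proof to lie precisely in this step: reconciling the global lexicographic order on $V(G)$ with the local, in-neighbourhood queries answered inside $G^+$, while guaranteeing that the precomputed skip-pointers can themselves be produced in linear total preprocessing time. This is where the argument genuinely goes beyond the earlier model-checking results based on the same transitive-fraternal-augmentation machinery.
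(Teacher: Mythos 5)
First, a point of reference: this survey does not actually prove Theorem~\ref{thm:Kazana} --- it is stated as a cited result of Kazana and Segoufin \cite{Kazana2013}, with only the remark that the proof rests on the transitive fraternal augmentations of \cite{POMNI} (the same tool behind Theorem~\ref{thm:chibound}). Your sketch correctly identifies that engine, so at the level of strategy you are aligned with what the paper and the cited source indicate.

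As a proof, however, the proposal has gaps at exactly the load-bearing points. (1) The quantifier-elimination step is misstated: one cannot in general rewrite a rank-$q$ formula so that every quantifier ranges over the in-neighbourhood of a free variable, because a witness may be an \emph{out}-neighbour in the augmented digraph, and out-degrees in a fraternal augmentation are unbounded. The actual mechanism (in \cite{Kazana2013}, and similarly in \cite{DKT2}) interleaves augmentation rounds with the introduction of new unary predicates and function symbols that absorb one quantifier at a time, terminating in a quantifier-free formula over an enriched functional signature; this iteration is the technical core and is absent from your sketch. (2) The counting dynamic program is underspecified: the components of a satisfying tuple need not lie on a directed path in $G^{+}$, so ``partial tuples ending at $v$'' with extension counts propagated from in-neighbours does not obviously enumerate all tuples exactly once; the known argument needs a case analysis (essentially inclusion--exclusion) over the possible functional dependencies among the free variables. (3) For the enumeration claim you explicitly defer the reconciliation of the global lexicographic order with the local index structure, which you yourself call the main obstacle --- so the proposal stops short precisely where the genuinely new work of \cite{Kazana2013} lies.
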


Eventually, the existence of efficient model checking algorithm has been extended to nowhere dense classes by
Grohe, Kreutzer, and Siebertz \cite{Grohe2013} using the notion
of $r$-neighborhood cover we already mentioned:

\begin{theorem}
\label{thm:FOND}
For every nowhere dense class $\mathcal C$ and every $\epsilon >0$, every property of graphs definable in first-order logic can be decided in time $O(n^{1+\epsilon})$ on $\mathcal{C}$.
\end{theorem}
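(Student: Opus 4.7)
The plan is to combine Gaifman's locality theorem with the sparse neighborhood covers produced by Theorem~\ref{thm:1}, and then to handle the resulting local model-checking problem by an induction on quantifier rank that exploits the hereditariness of the nowhere dense class $\mathcal C$. An auxiliary parameter $\epsilon'>0$, chosen at the end in terms of $\epsilon$ and the quantifier rank of $\phi$, will absorb all polynomial overheads.

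First, by Gaifman's locality theorem, every first-order sentence $\phi$ is equivalent to a Boolean combination of basic local sentences of the form
$$
\exists x_1\dots\exists x_k\Bigl(\bigwedge_{i<j}\mathrm{dist}(x_i,x_j)>2r\ \wedge\ \bigwedge_i\psi^{(r)}(x_i)\Bigr),
$$
where $r$ and $k$ depend only on $\phi$ and each $\psi^{(r)}(x)$ is $r$-local around $x$. It therefore suffices, for every such block, to compute the set $L=\{v\in V(G):G\models\psi^{(r)}(v)\}$ and then to decide whether $L$ contains $k$ pairwise $(>2r)$-distant vertices; the latter reduces to a greedy selection using the distance structure of $L$, which itself can be read off from the cover below.

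To compute $L$, I would apply Theorem~\ref{thm:1} with radius $r$ and parameter $\epsilon'$ to obtain an $r$-neighborhood cover $\mathcal X$ of $G$ of radius at most $2r$ with maximum degree at most $n^{\epsilon'}$. Since whether $v\in L$ depends only on the $r$-ball $B_r(v)$, which by construction lies in some cluster $X\in\mathcal X$, evaluating $L$ reduces to model-checking $\psi^{(r)}$ pointwise inside the clusters. Because $\sum_{X\in\mathcal X}|X|\leq n\cdot n^{\epsilon'}$, if one solves each in-cluster instance in time $|X|^{1+o(1)}$ this already yields $n^{1+\epsilon'+o(1)}$ overall, which is within the budget once $\epsilon'$ is chosen small enough.

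The main obstacle, and the heart of the argument, is the inner step: evaluating $\psi^{(r)}$ on a cluster $X$. Clusters still belong to the nowhere dense class $\mathcal C$ but need not be of bounded expansion, so the low tree-depth / transitive fraternal augmentation machinery underlying Theorem~\ref{thm:Kazana} is not directly available. I would instead proceed by induction on the quantifier rank of $\psi^{(r)}$, using the \emph{splitter game} characterization (equivalently, uniform quasi-wideness) of nowhere dense classes: for each rank $q$ and radius $r$, a bounded-length strategy lets one identify, in subpolynomial time, a small set $S\subseteq V(X)$ whose removal either trivialises the formula or strictly decreases its quantifier rank, while keeping control of the radii of the remaining local neighborhoods. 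The inductive step substitutes $S$ into $\psi^{(r)}$ as a set of parameters, recomputes a smaller neighborhood cover, and recurses; the base case (quantifier rank $0$) is a conjunction of atomic tests. Unfolding this recursion contributes an overhead of the form $n^{\epsilon''\cdot g(\phi)}$, and it remains to pick $\epsilon'$ and $\epsilon''$ sufficiently small in terms of $\epsilon$ and the quantifier rank of $\phi$ so that all contributions collapse into $O(n^{1+\epsilon})$, with a multiplicative constant depending only on $\phi$ and $\epsilon$.
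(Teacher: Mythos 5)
The paper does not prove Theorem~\ref{thm:FOND}: it is quoted from Grohe, Kreutzer and Siebertz \cite{Grohe2013}, with only the remark that the proof uses the $r$-neighborhood covers of Theorem~\ref{thm:1}. So there is no in-paper argument to compare against line by line; what can be said is that your outline correctly reconstructs the architecture of the cited proof --- Gaifman locality to reduce to $r$-local formulas and a scattered-set test, sparse $r$-neighborhood covers of degree $n^{\epsilon'}$ to localize the work into clusters of total size $n^{1+\epsilon'}$, and a bounded-depth recursion inside clusters driven by a game-theoretic characterization of nowhere denseness. Those are indeed the three pillars of \cite{Grohe2013}, and your observation that the bounded-expansion machinery behind Theorem~\ref{thm:Kazana} is unavailable here, forcing the recursive route, is exactly the right diagnosis.

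The one place where your sketch, taken literally, would fail is the termination measure of the inner recursion. Removing Splitter's set $S$ does \emph{not} decrease the quantifier rank of $\psi^{(r)}$; what the actual argument does is encode the adjacencies to the removed vertices as finitely many new unary predicates (colors) on $G-S$, rewrite $\psi^{(r)}$ over this enriched vocabulary with the \emph{same} quantifier rank, recompute a neighborhood cover of the colored graph $G-S$, and recurse. The quantity that strictly decreases is the number of rounds remaining in the splitter game (equivalently, the locality radius Splitter still has to survive), and nowhere denseness guarantees this is bounded by a constant $\ell=\ell(r,\mathcal C)$, which is what caps the recursion depth and the accumulated $n^{\epsilon''}$ overheads. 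An induction on quantifier rank, as you propose, has no reason to bottom out: the base case of rank $0$ is never reached by vertex deletion alone. Two smaller points you should also not wave away: the class must be replaced by its monotone closure (still nowhere dense) for clusters and deleted subgraphs to remain in scope, and the scattered-set step (finding $k$ elements of $L$ pairwise at distance $>2r$) is not a free greedy selection but itself requires a localization argument when the greedy procedure stalls. With the termination measure corrected, your proposal matches the intended proof.
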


However, it is still open whether a counting version of Theorem~\ref{thm:FOND} (in the spirit of Theorem~\ref{thm:Kazana}) holds.

\section{Low Tree-Depth Decomposition and Logarithmic Density of Patterns}

We have seen in the Section~\ref{sec:algo} that low tree-depth decomposition allows an easy counting of patterns. It appears that they also allow to prove some ``extremal'' results. A typical problem studied in extremal graph theory is to determine the maximum number of edges ${\rm ex}(n,H)$
a graph on $n$ vertices can contain without containing a subgraph isomorphic to $H$.
For non-bipartite graph $H$, the seminal result of Erd\H os and Stone \cite{erdos1946structure} gives a tight bound:

\begin{theorem}
$${\rm ex}(n,H)=\left(1-\frac{1}{\chi(H)-1}\right)\binom{n}{2}+o(n^2).$$
\end{theorem}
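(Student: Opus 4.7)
The plan is to establish the two-sided asymptotic by treating the lower and upper bounds separately. For the lower bound, I would exhibit the Tur\'an graph $T_{r}(n)$, the balanced complete $r$-partite graph on $n$ vertices with $r=\chi(H)-1$. Since $T_r(n)$ is $r$-colorable whereas $H$ requires $\chi(H)=r+1$ colors, $T_r(n)$ is $H$-free, and a direct count gives edge count $(1-1/r)\binom{n}{2}-O(n)$, which already matches the claimed lower bound up to the $o(n^2)$ error term.

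The substantive direction is the upper bound. I would prove the following supersaturation-type statement: for every integer $r\geq 2$, every $\epsilon>0$, and every positive integer $t$, there exists $n_0=n_0(r,\epsilon,t)$ such that every graph $G$ on $n\geq n_0$ vertices with at least $(1-1/r+\epsilon)\binom{n}{2}$ edges contains the balanced complete $(r+1)$-partite graph $K_{r+1}(t)$ as a subgraph. Once this is in hand, setting $r=\chi(H)-1$ and $t=|V(H)|$ finishes the proof: since $H$ embeds into $K_{\chi(H)}(|V(H)|)$ via its proper $\chi(H)$-coloring, any $G$ with more than $(1-1/(\chi(H)-1)+\epsilon)\binom{n}{2}$ edges contains $H$, and $\epsilon$ being arbitrary provides the claimed $o(n^2)$ upper bound on ${\rm ex}(n,H)$.

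For the supersaturation claim my approach is via the Szemer\'edi regularity lemma. Apply the lemma with parameters carefully chosen in terms of $\epsilon$, $r$, and $t$ to partition $V(G)$ into nearly equal classes $V_1,\dots,V_k$ such that almost all pairs $(V_i,V_j)$ are $\eta$-regular for some small $\eta$. Form the reduced graph $R$ whose vertices are the classes and whose edges are those $\eta$-regular pairs of density at least $\epsilon/4$. A standard accounting of the discarded edges (irregular pairs, low-density pairs, and intra-class edges) shows that $R$ still contains at least $(1-1/r+\epsilon/8)\binom{k}{2}$ edges for $k$ large enough, hence by Tur\'an's theorem $R$ contains a $K_{r+1}$. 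A greedy embedding inside the corresponding $(r+1)$ classes then extracts $K_{r+1}(t)$: at each step one selects a new vertex in the current class sharing many common neighbors with the vertices already chosen in the other classes, the existence of such a vertex being guaranteed by iterated application of the regularity condition.

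The main obstacle is the calibration of parameters in this greedy embedding. The regularity $\eta$, the density threshold, the lower bound on $|V_i|$, and the number of iterations $(r+1)t$ of the extension step all interact, and one must choose them so that the candidate set of common neighbors does not drop below a usable size before $K_{r+1}(t)$ is completed. Once these constants are tuned in the correct order of quantifiers, the $o(n^2)$ error absorbs both the edges lost to the partition refinement and the additive slack $\epsilon\binom{n}{2}$ in the hypothesis, yielding the Erd\H os--Stone formula.
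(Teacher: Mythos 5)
The paper states this classical theorem of Erd\H os and Stone without proof, merely citing the original 1946 article, so there is no in-text argument to compare yours against; judged on its own terms, your route is sound. It is the standard modern proof: the Tur\'an graph $T_{\chi(H)-1}(n)$ gives the lower bound (its edge count is in fact $(1-\frac{1}{r})\binom{n}{2}+O(n)$, slightly \emph{above} the main term rather than below, which is harmless), and the upper bound is the supersaturated form ``density $1-\frac{1}{r}+\epsilon$ forces $K_{r+1}(t)$'', obtained from the Szemer\'edi regularity lemma, Tur\'an's theorem applied to the reduced graph, and a greedy embedding into an $(r+1)$-tuple of pairwise regular dense classes. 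The parameter calibration you flag as the main obstacle is indeed the only real technical content, and it does close: one fixes $\eta$ of order $(\epsilon/8)^{(r+1)t}/t$ \emph{before} invoking the regularity lemma (whose quantifier structure permits this, together with a prescribed minimum number of parts so that $k$ is large enough for the Tur\'an step), whence the candidate sets, which shrink by a factor of at most $d-\eta$ at each of the at most $rt$ extension steps, remain of size at least $\eta|V_i|$ and regularity stays applicable throughout. Two contextual remarks: your argument is anachronistic relative to the cited source, since Erd\H os and Stone's original proof is an elementary iterated-degree induction on $r$ predating the regularity lemma by three decades --- the regularity route is conceptually cleaner but yields tower-type dependence of $n_0$ on $\epsilon$, against which the elementary proofs and their refinements give explicit, far smaller thresholds, a difference that is immaterial for the $o(n^2)$ statement as written; and your restriction to $r\geq 2$ in the supersaturation step matches the paper's standing hypothesis that $H$ is non-bipartite, so the degenerate case $\chi(H)=2$ never arises.
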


In the case of bipartite graphs, less is known. Let us mention the following result of
Alon, Krivelevich and Sudakov \cite{alon2003turan}

\begin{theorem}
Let $H$ be a bipartite graph with maximum degree $r$ on one side.
$${\rm ex}(n,H)= O(n^{2-\frac{1}{r}}).$$
\end{theorem}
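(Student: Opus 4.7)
The plan is to use the \emph{dependent random choice} technique. Write $H$ as a bipartite graph with parts $A \cup B$, where every vertex in $A$ has degree at most $r$, and set $a = |A|$, $b = |B|$. Let $G$ be a graph on $n$ vertices with at least $C\, n^{2-1/r}$ edges, for a constant $C = C(a,b,r)$ to be chosen; the average degree is then $d \geq 2C\, n^{1-1/r}$. The goal is to show that $G$ contains $H$ as a subgraph.

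The crux is the following auxiliary statement: provided $C$ is large enough, there exists $U \subseteq V(G)$ with $|U| \geq b$ such that every $r$-element subset of $U$ has at least $a + b$ common neighbors in $G$. To prove this, pick vertices $v_1, \dots, v_r$ uniformly and independently (with replacement) from $V(G)$ and set $U_0 = \bigcap_{i=1}^{r} N_G(v_i)$. By linearity of expectation and convexity,
$$\mathbb{E}[|U_0|] \;=\; \sum_{u \in V(G)} \left(\frac{\deg(u)}{n}\right)^{r} \;\geq\; n\left(\frac{d}{n}\right)^{r} \;\geq\; (2C)^{r},$$
whereas the expected number of $r$-subsets of $U_0$ with fewer than $a+b$ common neighbors is at most $\binom{n}{r}\bigl(\frac{a+b}{n}\bigr)^{r} \leq \frac{(a+b)^{r}}{r!}$, a constant independent of $n$. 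Choosing $C$ sufficiently large, the expected size of $U_0$ exceeds the expected number of bad $r$-subsets by at least $b$; fixing such an outcome and deleting one vertex from each bad subset yields the desired $U$.

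With $U$ in hand the embedding proceeds greedily. Assign $B$ injectively to $b$ arbitrary vertices of $U$ via a map $\iota$, then process the vertices of $A$ one at a time. For each $v \in A$, the set $\iota(N_H(v))$ has size at most $r$; pad it to size exactly $r$ inside $U$ and apply the property of $U$ to obtain at least $a+b$ common neighbors in $G$. Discarding the $b$ vertices already used for $\iota(B)$ and the fewer than $a$ vertices already used for previously placed images of $A$, at least one free vertex remains, which we assign as $\iota(v)$. This produces an isomorphic copy of $H$ in $G$.

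The main obstacle is the quantitative calibration of the dependent random choice step: one must ensure that the expected size of $U_0$, bounded below by a constant depending on $C$ and $r$, strictly dominates the expected number of bad $r$-subsets, whose bound depends only on $a$, $b$, $r$. Taking $C$ large enough in terms of $a$, $b$, $r$ suffices; thereafter the greedy embedding is routine, since the threshold $a+b$ was chosen precisely to absorb every previously placed vertex of $H$. As $C$ depends only on $H$, this gives ${\rm ex}(n, H) = O(n^{2-1/r})$.
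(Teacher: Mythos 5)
Your argument is correct: the dependent random choice step and the greedy embedding both check out (the only cosmetic point is that you should guarantee $|U|\geq\max(b,r)$ rather than just $|U|\geq b$ so that the padding to an $r$-set is always possible, which costs nothing since either $b\geq r$ or the maximum degree is really some $r'<r$). The paper states this theorem as a cited result of Alon, Krivelevich and Sudakov without reproducing a proof, and your argument is essentially the standard dependent-random-choice proof from that source.
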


The special case where $H$ is a subdivision of a complete graph will be of
prime interest in the study of nowhere dense classes. Precisely, denoting
${\rm ex}(n,K_t^{(\leq p)})$ the maximum number of edges a graph on $n$ vertices can contain without containing a subdivision of $K_t$ in which every edge is subdivided at most $p$ times,  Jiang \cite{Jiang} proved the following bound:

\begin{theorem}
For every integers $k,p$ it holds
$${\rm ex}(n,K_k^{(\leq p)})=O(n^{1+\frac{10}{p}}).$$
\end{theorem}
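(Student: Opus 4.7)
The plan is to prove the bound by the standard template for extremal problems on short topological subdivisions: first reduce $G$ to a subgraph of large minimum degree, then use a ball-growth argument in that subgraph to locate $k$ branch vertices together with internally disjoint short paths between each pair.

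Suppose $\|G\| \geq C n^{1+10/p}$ for a constant $C = C(k)$ to be chosen. Iteratively removing vertices of degree below $Cn^{10/p}$ leaves a nonempty subgraph $H \subseteq G$ with minimum degree $\delta := \delta(H) \geq Cn^{10/p}$. Inside $H$, fix a vertex $v$ and examine the BFS-layer sizes $|B_0(v)|, |B_1(v)|, \dots$. A classical dichotomy says that either the layers expand multiplicatively at rate roughly $\sqrt{\delta}$ (so that within radius $r \leq p/10$ we achieve $|B_r(v)| \geq n^{1/2}$), or else at some level the expansion stalls and we extract a proper subgraph of $H$ whose average degree still satisfies the same lower bound on fewer vertices, into which we recurse. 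After $O(1)$ rounds we obtain a center $w$ together with a ball $B_r(w)$, $r \leq p/10$, of size $\Omega(n^{1/2})$, within which every vertex admits an abundance of internally disjoint short paths to $w$.

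From this ball, a pigeonhole-style or random-choice argument selects $k$ branch vertex candidates $u_1,\dots,u_k$ so that every pair $\{u_i,u_j\}$ enjoys many paths in $H$ of length at most $p+1$, the abundance coming from the exponential ball growth just established. Greedily choose one such path $P_{ij}$ per pair. Since the total number of internal vertices across all $\binom{k}{2}$ chosen paths is at most $p\binom{k}{2} = O_{k,p}(1)$, which is dwarfed by the pool of candidate paths at each step, a greedy choice avoiding previously used internal vertices and the remaining branch candidates succeeds, producing the desired $K_k^{(\leq p)}$-subdivision.

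The main obstacle will be the ball-growth step, where the exponent $10/p$ must emerge precisely. The factor of $10$ reflects slack absorbed at several points: a factor of $2$ coming from pairing up half-balls of radius $p/10$ to form a short connection of length at most $p+1$, a factor of $2$ from having to use $\sqrt{\delta}$ rather than $\delta$ as the guaranteed multiplicative expansion rate, and further slack absorbed by the $O(1)$ recursive rounds. Jiang's proof tunes these trade-offs carefully; the constant $10$ is not claimed to be optimal, but it is enough for the applications in the sequel.
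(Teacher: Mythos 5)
First, note that the survey does not actually prove this statement: it is quoted verbatim from Jiang's paper \cite{Jiang}, so there is no in-paper argument to compare against, and your sketch has to be judged against the known proof strategy (Kostochka--Pyber and Jiang's refinement of it). Your overall architecture --- pass to a subgraph of large minimum degree, run a ``grow or densify'' dichotomy on balls, then pick branch vertices and connect them greedily by short internally disjoint paths --- is indeed the right template. But two of your three steps rest on claims that are false as stated, and the second of these is where the entire difficulty of the theorem lives.

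The first problem is the ``classical dichotomy'': minimum degree $\delta$ gives no multiplicative layer expansion whatsoever (a blown-up path has huge minimum degree and balls that grow additively), so the rate $\sqrt{\delta}$ is not a classical fact but precisely the thing to be proved. The correct version is: either $|B_{i+1}(v)|\geq |B_i(v)|\cdot\delta^{1/2}$ or the ball $B_{i+1}(v)$ induces a subgraph whose density \emph{exponent} strictly exceeds that of the host; one must then verify that the exponent increases by a definite amount at each recursion (so the number of rounds really is $O(1)$) and track how the host shrinks, since your later counts are stated relative to $n$ while the surviving subgraph may have far fewer vertices. This is repairable but it is the bookkeeping that produces the constant $10$, and it is asserted rather than carried out. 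The second, and fatal, problem is the connection step. Knowing that $u_1,\dots,u_k$ all lie in a ball $B_r(w)$ of size $\Omega(n^{1/2})$ gives each pair \emph{one} path of length at most $2r$, and all $\binom{k}{2}$ of these paths run through $w$ and the few BFS-tree vertices near it; it does not give ``an abundance of internally disjoint short paths'' between each pair, so the greedy argument has nothing to select from. (Nor do two sets of size $n^{1/2}$ in an $n$-vertex graph have to intersect, so growing separate balls from $u_i$ and $u_j$ does not automatically produce even one short connection.) The missing idea is a \emph{robust} expansion statement --- e.g.\ that after the dichotomy one may assume every ball of radius roughly $2/\epsilon$ around every vertex covers more than half of the host, and that this survives the deletion of the $O_{k,p}(1)$ vertices already used --- which is exactly the lemma Jiang works to establish. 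Without it the proof does not close.
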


From this theorem follows that if a class $\mathcal{C}$ is such that
$\limsup_{G\in\mathcal{C}\shtm t}\frac{\log \|G\|}{\log |G|}>1+\epsilon$ then
$\mathcal C\shtm \frac{10t}{\epsilon}$ contains graphs with unbounded clique number. This property is a main ingredient in the proof of the following classification ``trichotomy'' theorem.

\begin{theorem}[\cite{ND_characterization}]
\label{thm:tri}
Let $\mathcal C$ be an infinite class of graphs. Then
\begin{equation*}
 \adjustlimits\sup_{t}\limsup_{G\in\mathcal C \shtm t}\frac{\log\|G\|}{\log|G|}\in\{-\infty,0,1,2\}.
\end{equation*}

Moreover, $\mathcal{C}$ is nowhere dense if and only if
$\adjustlimits\sup_{t}\limsup_{G\in\mathcal C \shtm t}\frac{\log\|G\|}{\log|G|}\leq 1$.
\end{theorem}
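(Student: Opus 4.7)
The plan is to analyse the value $v := \adjustlimits\sup_t \limsup_{G \in \mathcal{C}\shtm t}\frac{\log\|G\|}{\log|G|}$ case by case, establishing that both open intervals $(1,2)$ and $(0,1)$ are unreachable. The trivial estimate $\|G\|\leq\binom{|G|}{2}$ gives $v\leq 2$, and if $\mathcal{C}$ is somewhere dense there is some $p$ with $\mathcal{C}\shtm p\supseteq\{K_n:n\geq 1\}$, whence $\log\|K_n\|/\log|K_n|\to 2$ forces $v=2$. This already handles the ``$\Leftarrow$'' direction of the moreover clause: if $v\leq 1$ then $\mathcal{C}$ cannot be somewhere dense.

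For the gap $(1,2)$, Jiang's theorem ${\rm ex}(n,K_k^{(\leq p)})=O(n^{1+10/p})$ is the essential input. Assume $v>1$, so some $\epsilon>0$ and some $t$ yield an infinite sequence $H_n\in\mathcal{C}\shtm t$ with $|H_n|\to\infty$ and $\|H_n\|\geq|H_n|^{1+\epsilon}$. Pick $p$ with $10/p<\epsilon/2$. Then for every fixed $k$, once $|H_n|$ is large enough, $\|H_n\|$ exceeds Jiang's threshold $c(k,p)|H_n|^{1+10/p}$, so $H_n\supseteq K_k^{(\leq p)}$. This exhibits $K_k$ as a shallow topological minor of $H_n$ at depth $\lceil p/2\rceil$. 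Composing shallow topological minor relations is routine: if we replace each edge in a depth-$a$ subdivision by its depth-$b$ path in the host, we obtain paths of total length at most $(2a+1)(2b+1)$, giving depth $O(ab)$ overall (internal disjointness is preserved because the paths of the intermediate subdivision use pairwise disjoint edges). Hence $K_k$ is a shallow topological minor of some $G\in\mathcal{C}$ at depth at most $D=D(t,p)$, uniformly in $k$; since $k$ is arbitrary, $\mathcal{C}$ is somewhere dense, contradicting $v\in(1,2)$. This also gives the ``$\Rightarrow$'' direction of the moreover clause.

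For the gap $(0,1)$ and the separation between $0$ and $-\infty$, I will use that each $\mathcal{C}\shtm t$ is closed under subgraphs: restricting the subdivision of $H$ in its host to the paths indexed by edges of a subgraph $H'\subseteq H$ exhibits $H'$ as a shallow topological minor of the same host. If $\sup_{G\in\mathcal{C}}\|G\|=\infty$, pick $G_n\in\mathcal{C}$ with $\|G_n\|\to\infty$ and let $G_n'$ be $G_n$ with isolated vertices removed; then $G_n'\in\mathcal{C}\shtm 0$, $\|G_n'\|=\|G_n\|$, $|G_n'|\leq 2\|G_n\|$, so $\log\|G_n'\|/\log|G_n'|\to 1$ and $v\geq 1$. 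Combined with $v\leq 1$ in the nowhere dense case, this forces $v=1$. If instead $\sup_{G\in\mathcal{C}}\|G\|<\infty$, then $\|H\|$ is uniformly bounded across every $\mathcal{C}\shtm t$ (shallow topological minors have no more edges than the host), and $\log\|H\|/\log|H|\to 0$ along any sequence with $|H|\to\infty$ carrying at least one edge; the value is $0$ if such a sequence exists in some $\mathcal{C}\shtm t$ and $-\infty$ otherwise. This exhausts the four cases.

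The main obstacle is the composition step in the Jiang argument: the depth $D$ at which $K_k$ is extracted from the original $G\in\mathcal{C}$ must depend only on $t$ and $\epsilon$ (through $p$), not on $k$ or $n$. This is arranged by fixing $p$ in terms of $\epsilon$ first and then invoking Jiang's bound uniformly for each $k$ once $|H_n|$ is large enough; the remaining work is careful path-length bookkeeping to confirm that the concatenated topological embedding remains a bona fide shallow subdivision of bounded depth.
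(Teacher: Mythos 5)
Your proof is correct and follows the same route the paper indicates for this cited theorem: Jiang's bound on ${\rm ex}(n,K_k^{(\leq p)})$ closes the gap $(1,2)$ by forcing arbitrarily large cliques as bounded-depth topological minors, and elementary edge-counting disposes of the values in $\{-\infty,0,1\}$. The paper itself only sketches this step (stating the consequence of Jiang's theorem as ``a main ingredient'' and deferring to \cite{ND_characterization}), and the details you supply --- the composition bound for iterated shallow topological minors and the case split on whether $\sup_{G\in\mathcal{C}}\|G\|$ is finite --- are sound.
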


Note that the property that the logarithmic density of edges is integral
needs to consider all the classes $\mathcal C\shtm t$. For instance, the class
$\mathcal D$ of graphs
with no $C_4$ has a bounding logarithmic edge density of $3/2$, which jumps to $2$
when on considers $\mathcal{D}\shtm 1$.

Using low tree-depth decomposition, it is possible to extend
Theorem~\ref{thm:tri} to other pattern graphs:

\begin{theorem}[\cite{Taxi_hom}]
\label{thm:countF}
For every infinite class of graphs $\mathcal C$ and every graph $F$
$$
\adjustlimits \lim_{i\rightarrow\infty}\limsup_{G\in\mathcal C\shtm i}\frac{\log (\#F\subseteq G)}{\log |G|}\in\{-\infty,0,1,\dots,\alpha(F),|F|\},$$
where $\alpha(F)$ is the stability number of $F$.

Moreover, if $F$ has at least one edge, then $\mathcal{C}$ is nowhere dense if and only if $\adjustlimits \lim_{i\rightarrow\infty}\limsup_{G\in\mathcal C\shtm i}\frac{\log (\#F\subseteq G)}{\log |G|}\leq\alpha(F)$.
\end{theorem}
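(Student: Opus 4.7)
The inequality $L\leq|F|$, writing $L$ for the iterated limit in the statement, is trivial since $\#F\subseteq G\leq |G|^{|F|}$. When $\mathcal{C}$ is somewhere dense, there is an integer $p$ such that $\mathcal{C}\shtm p$ contains $K_n$ for every $n$; since $\#F\subseteq K_n=\Theta(n^{|F|})$, the relevant limsup at every $i\geq p$ is already $|F|$, so $L=|F|$. When $F$ has at least one edge, this already gives the ``only if'' direction of the characterisation because $\alpha(F)<|F|$.

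The main direction is that $\mathcal{C}$ nowhere dense forces $L\leq\alpha(F)$, for which I plan to invoke low tree-depth decomposition with parameter $t=|F|$. For fixed $i$ the class $\mathcal{C}\shtm i$ remains nowhere dense by Theorem~\ref{thm:ND}, so Theorem~\ref{thm:chiND} supplies, for every $\epsilon>0$ and every sufficiently large $G\in\mathcal{C}\shtm i$, a decomposition into $N\leq|G|^{\epsilon}$ colour classes such that any $|F|$ of them induce a subgraph of tree-depth at most $|F|$. Every copy of $F$ in $G$ uses at most $|F|$ colours, and hence lies in one of $\binom{N}{|F|}\leq|G|^{|F|\epsilon}$ such subgraphs. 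The task is thus reduced to the following \textbf{Key Lemma}: for every graph $F$ and integer $t$ there exists $c(F,t)$ with $\#F\subseteq H\leq c(F,t)\,|H|^{\alpha(F)}$ whenever $\mathrm{td}(H)\leq t$. Granting it and summing gives $\#F\subseteq G\leq |G|^{\alpha(F)+|F|\epsilon}$, and letting $\epsilon\to 0$ yields $L\leq\alpha(F)$.

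To prove the Key Lemma I will induct on $|F|$. If $F$ has an isolated vertex $v$, then $\alpha(F)=\alpha(F-v)+1$ and the bound follows from $\#F\subseteq H\leq |H|\cdot(\#(F-v)\subseteq H)$. Otherwise, fixing a maximum independent set $I$ of $F$ and vertex cover $D=V(F)\setminus I$, I will choose $\phi(I)$ in at most $|H|^{\alpha(F)}$ ways and then show that each such choice admits only $c(F,t)$ completions $\phi:D\to V(H)$. This extension count is the step I expect to be the main obstacle, since vertices of $H$ can have unbounded degree. I plan to fix an elimination forest $Y$ of $H$ of height $t$, process $D$ in top-down order, and argue that each $\phi(d)$ is either forced onto one of the $\leq t-1$ ancestors of some already-placed neighbour, or else must lie in the common-ancestor set of two incomparable already-placed images, which has size at most $t$; hence the completion has at most $t^{|D|}$ possibilities, which is the desired constant.

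Finally, to exclude non-integer values of $L$ inside $[0,\alpha(F)]$, I will follow the density-increment idea behind Theorem~\ref{thm:tri}: if for some $k<\alpha(F)$ and $\delta>0$ one has $\#F\subseteq G\geq |G|^{k+\delta}$, then by pigeonhole some vertex of $G$ participates in at least $|G|^{k+\delta-1}$ copies of $F$, and rooting there and iterating $O(1/\delta)$ times produces, at the cost of a bounded increase in shallow-topological-minor depth, graphs in $\mathcal{C}\shtm{i'}$ witnessing $\#F\geq |G'|^{k+1-o(1)}$. Thus $L$ cannot land strictly between consecutive integers of $\{0,\dots,\alpha(F)\}$, and the gap between $\alpha(F)$ and $|F|$ is closed by the clique construction of the somewhere-dense case above, giving the claimed value set $\{-\infty,0,1,\dots,\alpha(F),|F|\}$.
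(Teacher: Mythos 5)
Your overall architecture (the trivial bound $L\leq|F|$, cliques for the somewhere-dense case, low tree-depth decomposition with parameter $|F|$ reducing the nowhere-dense bound to counting copies of $F$ in bounded tree-depth, and a density increment for integrality) is the same as that of the cited source, and your Key Lemma is a true statement. However, your proof of the Key Lemma breaks exactly at the step you flagged, and the repair you propose does not work. Take $F=K_3$, so $\alpha(F)=1$, $I=\{u\}$, $D=\{d_1,d_2\}$, and let $H$ be the book graph on $a,b,c_1,\dots,c_n$ with edges $ab$, $ac_i$, $bc_i$; then ${\rm td}(H)=3$, yet the single choice $\phi(u)=a$ admits $2n$ completions $(\phi(d_1),\phi(d_2))\in\{(b,c_i),(c_i,b)\}$, not $c(F,t)$ many. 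Your case analysis fails because when the first vertex of $D$ is processed its only placed neighbour lies in $I$, and nothing forces its image to be an \emph{ancestor} of $\phi(u)$ in the elimination forest: it may be any of the unboundedly many descendants, and the ``two incomparable placed images'' case never triggers. The lemma is rescued by reversing the parametrization: for a copy with image $S$, the maximal elements of $S$ in the elimination forest are pairwise incomparable, hence non-adjacent in $H$ (which is a subgraph of the closure of the forest), hence their preimages form an independent set of $F$; so $S$ has at most $\alpha(F)$ maximal elements and every other vertex of $S$ is among their at most $t-1$ ancestors each, giving $\#F\subseteq H\leq |F|!\,2^{(t-1)\alpha(F)}|H|^{\alpha(F)}$. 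In the book-graph example the maximal element is $c_i$, and it is that vertex, not a prescribed independent set of $F$, that must absorb the factor $n$.

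Your step excluding non-integral values in $[0,\alpha(F)]$ is also where the paper locates the real work, and your sketch omits its key idea. Finding by pigeonhole a vertex lying in $|G|^{k+\delta-1}$ copies and ``rooting and iterating'' does not by itself produce $k+1$ independently varying attachment sets, which is what is needed to lift the exponent to $k+1$; the cited proof extracts a $(k+1,F)$-sunflower $(C,\mathcal F_1,\dots,\mathcal F_{k+1})$ (again using the low tree-depth decomposition) and the stepping-up lemma then yields a subgraph $G'$ of $G$ itself, with no increase in shallow-topological-minor depth, satisfying $\#F\subseteq G'\geq\bigl((|G'|-|F|)/(k+1)\bigr)^{k+1}$. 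As written, your final step asserts the conclusion of that lemma rather than proving it.
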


The main ingredient in the proof of this theorem is the analysis of local configurations, called $(k,F)$-sunflowers (see Fig.~\ref{fig:sunflower}). Precisely, for graphs $F$ and $G$, a
{\em $(k,F)$-sunflower} in $G$ is a $(k+1)$-tuple
$(C,\mathcal F_1,\dots,\mathcal F_k)$, such that $C\subseteq V(G), \mathcal F_i\subseteq \mathcal P(V(G))$,
the sets in $\{C\}\cup\bigcup_i\mathcal F_i$ are pairwise disjoints and
there exists a partition $(K,Y_1,\dots,Y_k)$ of $V(F)$ so that
\begin{itemize}
  \item $\forall i\neq j,\ \omega(Y_i,Y_j)=\emptyset$,
	\item $G[C]\approx F[K]$,
	\item $\forall X_i\in\mathcal F_i, G[X_i]\approx F[Y_i]$,
	\item $\forall (X_1,\dots,X_k)\in\mathcal F_1\times\dots\times\mathcal F_k$, the subgraph  of $G$ induced by $C\cup X_1\cup\dots\cup X_k$ is isomorphic to $F$.
\end{itemize}

\begin{figure}[ht]
\begin{center}
\includegraphics[width=.75\textwidth]{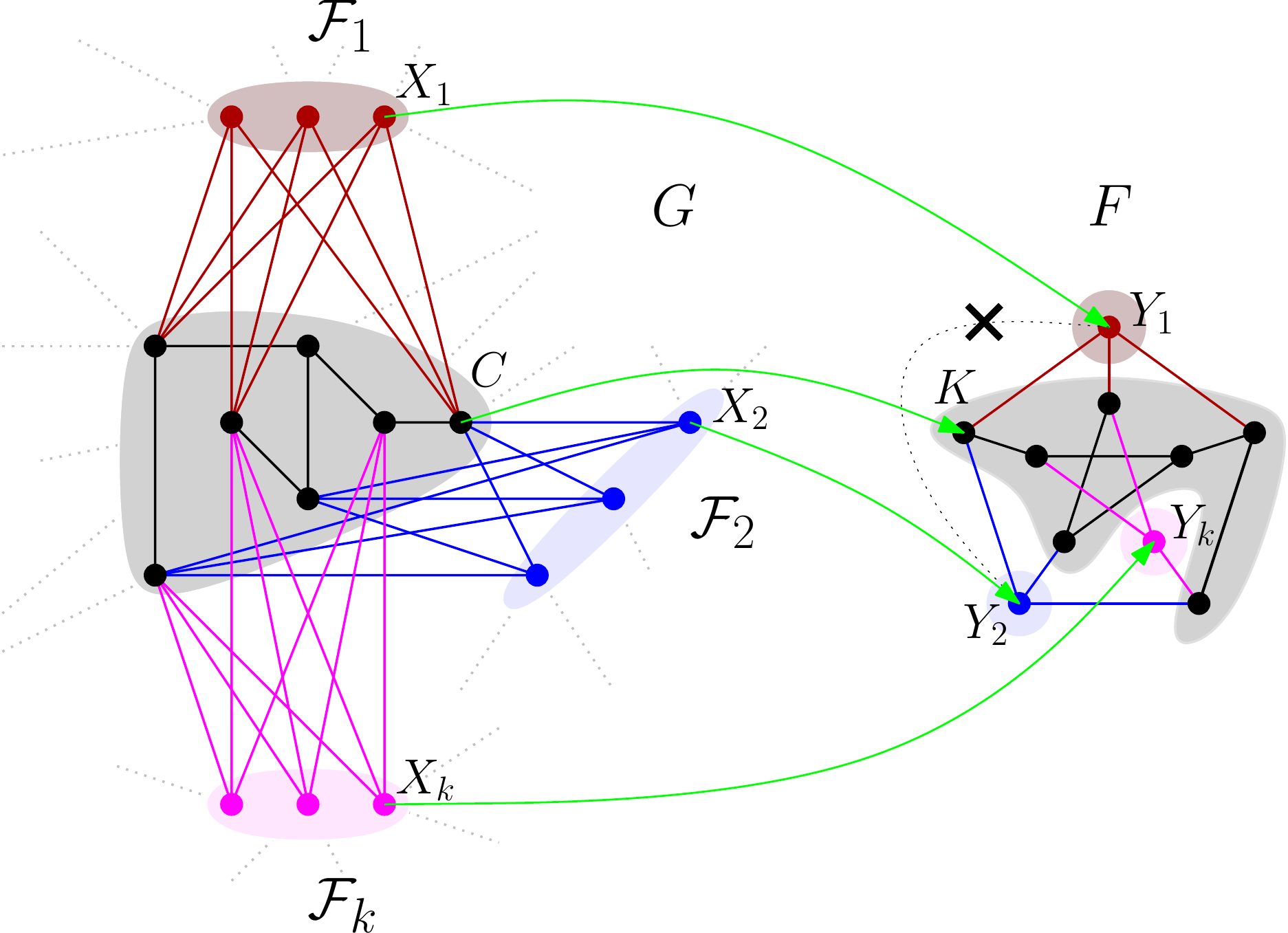}
\caption{A $(3,{\rm Petersen})$-sunflower}
\label{fig:sunflower}
\end{center}
\end{figure}

The following stepping up lemma gives some indication on how low tree-depth decomposition is related to the proof of Theorem~\ref{thm:countF}:

\begin{lemma}[\cite{Taxi_hom}]
There exists a function $\tau$ such that for every integers $p,k$, every graph
 $F$  of order $p$, every $0<\epsilon<1$, the following property holds:

Every graph $G$ such that $(\#F\subseteq G)\,> |G|^{k+\epsilon}$ contains a $({k+1},F)$-sunflower
$(C,\mathcal F_1,\dots,\mathcal F_{k+1})$ with
$$
\min_i |\mathcal F_i|\geq \left(\frac{|G|}{\binom{\chi_p(G)}{p}^{1/\epsilon}}\right)^{\tau(\epsilon,p)}
$$

In particular, $G$ contains a subgraph $G'$ such that
\begin{align*}
&|G'|\geq (k+1)\left(\frac{{|G|}}{{\binom{\chi_p(G)}{p}^{1/\epsilon}}}\right)^{\tau(\epsilon,p)}\\
\text{and}\qquad&(\#F\subseteq G')\geq \left(\frac{|G'|-|F|}{k+1}\right)^{{k+1}}.
\end{align*}
\end{lemma}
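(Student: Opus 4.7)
\emph{Proof plan.} The strategy is to reduce to a subgraph of bounded tree-depth via low tree-depth decomposition and then extract the sunflower combinatorially using the elimination forest. The first move is to apply the definition of $\chi_p(G)$: color $V(G)$ with $N := \chi_p(G)$ colors so that any $p$-subset of colors induces a subgraph of tree-depth at most $p$. Every copy of $F$ spans at most $p$ colors, so a pigeonhole over the $\binom{N}{p}$ possible $p$-color subsets supplies a set $I$ of colors for which $H := G[I]$ has tree-depth at most $p$ and still contains $M \geq |G|^{k+\epsilon}/\binom{N}{p}$ distinct $F$-copies. Fix an elimination forest $Y$ of $H$ of height $\leq p$.

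Next, classify each copy $\phi : F \hookrightarrow H$ by its \emph{type}: the isomorphism class of the labelled rooted sub-forest $Y[\phi(V(F))]$, decorated with the bijection $V(F) \to \phi(V(F))$ and the depth of every vertex in $Y$. Since $|V(F)| \leq p$ and depths are at most $p$, only some function $t_0(p)$ of types occur, so a single type $\tau_0$ is realized by $M' \geq M/t_0(p)$ copies. All these copies share a common skeleton $S$ on $V(F)$, inherited from $Y$; the crucial tree-depth observation is that two vertices of $V(F)$ which are incomparable in $S$ are automatically non-adjacent in $F$, because every edge of $H$ goes between $Y$-ancestor/descendant pairs. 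A density check now shows that the width of $S$ exceeds $k$: if it did not, the total number of copies of type $\tau_0$ would be bounded by $|H|^{\mathrm{width}(S)} \leq |H|^{k}$, contradicting $M' > |G|^{k}$ (valid once $|G| > \binom{N}{p}^{1/\epsilon} t_0(p)^{1/\epsilon}$; outside this regime the claimed bound is trivial). Hence $S$ contains an antichain of size at least $k+1$, and I take $K \subseteq V(F)$ to be the ancestor closure in $S$ of $k+1$ chosen antichain vertices; the descendant subtrees below $K$ then supply petals $Y_1, \ldots, Y_{k+1}$ with no $F$-edge between distinct $Y_i$'s.

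Iteratively pigeonholing on $\phi(v)$ for $v \in K$ (losing a factor $|H|^{|K|} \leq |H|^{p}$ in total) pins down a common core $C = \phi(K)$ shared by $M'' \geq M'/|H|^{|K|}$ copies. Setting $\mathcal F_i := \{\phi(Y_i) : \phi \text{ one of the surviving } M'' \text{ copies}\}$, the elimination-forest separation of subtrees forbids any $H$-edge between $X_i \in \mathcal F_i$ and $X_j \in \mathcal F_j$ for $i \neq j$, while the $C$-to-$X_i$ adjacencies are automatically correct because each $X_i$ came from an induced $F$-copy with core $C$. Every cross-combination $C \cup X_1 \cup \cdots \cup X_{k+1}$ therefore induces an isomorphic copy of $F$, yielding the required sunflower once a short greedy prune enforces pairwise disjointness among $C$ and the petals (at negligible cost, since each petal has size $\leq p$).

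The main obstacle is the lower bound on $\min_i |\mathcal F_i|$: naive projection of the $M''$ tuples onto coordinate $i$ gives only $\max_i |\mathcal F_i| \geq (M'')^{1/(k+1)}$, and one coordinate may be pathologically concentrated. The remedy is to iterate: whenever some $|\mathcal F_j|$ falls below the target, absorb its dominant value into the core, drop to a $k$-petal sub-problem for the surviving copies, and reapply the branching argument. Each iteration pays a factor $\binom{N}{p}$ in the denominator but cashes in an $\epsilon$ fraction of density, which is precisely the origin of the exponent $1/\epsilon$ in the stated bound. Since the density hypothesis combined with $\#F \subseteq G \leq |G|^{p}$ forces $k < p$, the process terminates in at most $p$ rounds, and the remaining polynomial loss factors $t_0(p)$, $|H|^{O(p)}$ and the pruning constant are absorbed by choosing $\tau(\epsilon,p)$ sufficiently small. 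The ``in particular'' clause follows by setting $G' := G\bigl[C \cup \bigcup_{i,\,X \in \mathcal F_i} X\bigr]$, from $|G'| \geq |C| + (k+1)\min_i |\mathcal F_i|$ and $\prod_i |\mathcal F_i| \geq (\min_i |\mathcal F_i|)^{k+1}$ distinct cross-combinations, each a valid $F$-copy in $G'$.
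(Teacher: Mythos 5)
The survey itself gives no proof of this lemma (it is quoted from \cite{Taxi_hom}), so I am judging your argument on its own merits. Your overall architecture --- pigeonhole on a $p$-subset of colors of a low tree-depth decomposition to land in a subgraph $H$ of tree-depth at most $p$ with at least $(\#F\subseteq G)/\binom{\chi_p(G)}{p}$ copies, classify copies by their trace in an elimination forest of height $p$, use a width/Dilworth count to force an antichain of size $k+1$, fix the core by pigeonhole, and iterate to boost $\min_i|\mathcal F_i|$ --- is the right skeleton and matches the strategy of the cited source. But there is a genuine gap at the step you dispose of in one clause: ``the elimination-forest separation of subtrees forbids any $H$-edge between $X_i\in\mathcal F_i$ and $X_j\in\mathcal F_j$.'' This is false for petals coming from \emph{different} copies. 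Fixing the type (depths plus the isomorphism class of the induced subforest on the image) and the core does not prevent $\phi(Y_i)$ and $\psi(Y_j)$ from being comparable in $Y$, hence possibly adjacent. Concretely, take $F=P_4=y_1k_1k_2y_2$ with $K=\{k_1,k_2\}$, and $H$ on $\{c_1,c_2,a,a',b,b'\}$ with edges $c_1c_2,\,c_1a,\,c_1a',\,c_2b,\,c_2b',\,ab'$, with elimination tree $c_1\to c_2\to\{a,a'\}$, $b'$ a child of $a$ and $b$ a child of $a'$. The two induced copies $ac_1c_2b$ and $a'c_1c_2b'$ share the core $\{c_1,c_2\}$, have identical types (same depths, same induced forest on the image), yet the cross-combination $\{c_1,c_2,a,b'\}$ induces a $4$-cycle, not $P_4$. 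So the fourth condition in the sunflower definition fails. Repairing this requires forcing the petals of distinct families into pairwise incomparable cones of $Y$, which cannot be done by fixing the core and the type alone (one cannot simply pigeonhole the full ancestor sets of petal vertices either, since those vary with the petal); this is precisely where the bulk of the work in \cite{Taxi_hom} lies, and your sketch does not contain a mechanism for it.

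Two smaller points. First, your derivation of the ``in particular'' clause from $|G'|\geq|C|+(k+1)\min_i|\mathcal F_i|$ and $\prod_i|\mathcal F_i|\geq(\min_i|\mathcal F_i|)^{k+1}$ does not go through if you take $G'$ to be the union of \emph{all} petals: with $m=\min_i|\mathcal F_i|$ one has $|G'|-|F|=\sum_i(|\mathcal F_i|-1)|Y_i|$, and already for $k+1=2$, $|\mathcal F_1|=2$, $|\mathcal F_2|$ large, singleton petals, the claimed inequality $\prod_i|\mathcal F_i|\geq\bigl(\tfrac{|G'|-|F|}{k+1}\bigr)^{k+1}$ fails; you must first trim each family to exactly $m$ petals (and even then the arithmetic only closes when the $|Y_i|$ are controlled). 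Second, the ``greedy prune at negligible cost'' for pairwise disjointness is not negligible in general --- many petals can share a vertex --- and has to be folded into the same absorb-into-the-core iteration you describe for boosting the minimum; it would be worth saying so explicitly rather than treating it as an afterthought.
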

\providecommand{\noopsort}[1]{}\providecommand{\noopsort}[1]{}
\providecommand{\bysame}{\leavevmode\hbox to3em{\hrulefill}\thinspace}
\providecommand{\MR}{\relax\ifhmode\unskip\space\fi MR }
\providecommand{\MRhref}[2]{%
  \href{http://www.ams.org/mathscinet-getitem?mr=#1}{#2}
}
\providecommand{\href}[2]{#2}


\begin{thebibliography}{10}

\bibitem{alon2003turan}
N.~Alon, M.~Krivelevich, and B.~Sudakov, \emph{Tur{\'a}n numbers of bipartite
  graphs and related ramsey-type questions}, Combinatorics, Probability and
  Computing \textbf{12} (2003), no.~5+ 6, 477--494.

\bibitem{alon2}
N.~Alon, C.~McDiarmid, and B.~Reed, \emph{Star arboricity}, Combinatorica
  \textbf{12} (1992), 375--380.

\bibitem{AMS}
N.~Alon, B.~Mohar, and D.P. Sanders, \emph{On acyclic colorings of graphs on
  surfaces}, Israel J. Math. \textbf{94} (1994), 273--283.

\bibitem{alon95}
N.~Alon, R.~Yuster, and U.~Zwick, \emph{Color-coding}, J. ACM \textbf{42}
  (1995), no.~4, 844--856.

\bibitem{Atminas2014}
A.~Atminas, V.~Lozin, and I.~Razgon, \emph{Well-quasi-ordering, tree-width and
  subquadratic properties of graphs}, arxiv:1410.3260v1, October 2014.

\bibitem{Bodlaender1995}
H.L. Bodlaender, J.S. Deogun, K.~Jansen, T.~Kloks, D.~Kratsch, H.~M\"uller, and
  Zs. Tuza, \emph{Rankings of graphs}, Graph-Theoretic Concepts in Computer
  Science, Lecture Notes in Computer Science, vol. 903/1995, Springer, 1995,
  pp.~292--304.

\bibitem{Borodin1979}
O.V. Borodin, \emph{On acyclic colorings of planar graphs}, Discrete Math.
  \textbf{25} (1979), no.~3, 211--236.

\bibitem{chrobak}
M.~Chrobak and D.~Eppstein, \emph{Planar orientations with low out-degree and
  compaction of adjacency matrices}, Theoret. Comput. Sci. \textbf{86} (1991),
  243--266.

\bibitem{coppersmith90}
D.~Coppersmith and S.~Winograd, \emph{Matrix multiplication via arithmetic
  progressions}, J. Symbolic Comput. \textbf{9} (1990), 251--280.

\bibitem{Courcelle2}
B.~Courcelle, \emph{The monadic second-order logic of graphs {I}: recognizable
  sets of finite graphs}, Inform. and Comput. \textbf{85} (1990), 12--75.

\bibitem{Dawar2009}
A.~Dawar and S.~Kreutzer, \emph{Parametrized complexity of first-order logic},
  Tech. Report 131, Electronic Colloquium on Computational Complexity, 2009.

\bibitem{vertex_ranking}
J.S. Deogun, T.~Kloks, D.~Kratsch, and H.~M{\"u}ller, \emph{On vertex ranking
  for permutation and other graphs}, Proceedings of the 11th Annual Symposium
  on Theoretical Aspects of Computer Science (P.~Enjalbert, E.W. Mayr, and K.W.
  Wagner, eds.), Lecture Notes in Computer Science, vol. 775, Springer, 1994,
  pp.~747--758.

\bibitem{2tw}
M.~DeVos, G.~Ding, B.~Oporowski, D.P. Sanders, B.~Reed, P.D. Seymour, and
  D.~Vertigan, \emph{Excluding any graph as a minor allows a low tree-width
  2-coloring}, Journal of Combinatorial Theory, Series B \textbf{91} (2004),
  25--41.

\bibitem{Ding1992}
G.~Ding, \emph{Subgraphs and well-quasi-ordering}, J. Graph Theory \textbf{16}
  (1992), no.~5, 489--502.

\bibitem{Dvo2007}
Z.~Dvo{\v r}\'ak, \emph{Asymptotical structure of combinatorial objects}, Ph.D.
  thesis, Charles University, Faculty of Mathematics and Physics, 2007.

\bibitem{Dvorak2012969}
Z.~Dvo{\v r}\'ak, A.~Giannopoulou, and D.~Thilikos, \emph{Forbidden graphs for
  tree-depth}, European Journal of Combinatorics \textbf{33} (2012), no.~5, 969
  -- 979, EuroComb '09.

\bibitem{DKT2}
Z.~Dvo{\v r}\'ak, D.~Kr{\'a}{\v l}, and R.~Thomas, \emph{Testing first-order
  properties for subclasses of sparse graphs}, Journal of the ACM \textbf{60:5
  {\rm Article} 36} (2013).

\bibitem{Eggan1963}
L.C. Eggan, \emph{Transition graphs and the star-height of regular events},
  Michigan Mathematical Journal \textbf{10} (1963), no.~4, 385--397.

\bibitem{6280445}
M.~Elberfeld, M.~Grohe, and T.~Tantau, \emph{Where first-order and monadic
  second-order logic coincide}, Logic in Computer Science (LICS), 2012 27th
  Annual IEEE Symposium on, 2012, pp.~265--274.

\bibitem{Epp-SODA-95}
D.~Eppstein, \emph{{Subgraph isomorphism in planar graphs and related
  problems}}, Proc. 6th Symp. Discrete Algorithms, ACM and SIAM, January 1995,
  pp.~632--640.

\bibitem{Epp-JGAA-99}
\bysame, \emph{{Subgraph isomorphism in planar graphs and related problems}},
  Journal of Graph Algorithms \& Applications \textbf{3} (1999), no.~3, 1--27.

\bibitem{Epp-Algo-00}
\bysame, \emph{{Diameter and treewidth in minor-closed graph families}},
  Algorithmica \textbf{27} (2000), 275--291, Special issue on treewidth, graph
  minors, and algorithms.

\bibitem{ert}
P.~Erd{\H o}s, A.L. Rubin, and H.~Taylor, \emph{Choosability in graphs}, Proc.
  West-Coast Conference on Combinatorics,Graph Theory and Computing (Arcata,
  California), vol. XXVI, 1979, pp.~125--157.

\bibitem{erdos1946structure}
P.~Erd{\"o}s and A.H. Stone, \emph{On the structure of linear graphs}, Bull.
  Amer. Math. Soc \textbf{52} (1946), 1087--1091.

\bibitem{Thue_choos}
F.~Fiorenzi, P.~Ochem, P.~Ossona~de Mendez, and X.~Zhu, \emph{Thue choosability
  of trees}, Discrete Applied Mathematics \textbf{159} (2011), 2045--2049.

\bibitem{Frick2004}
M.~Frick and M.~Grohe, \emph{The complexity of first-order and monadic
  second-order logic revisited}, Annals of Pure and Applied Logic \textbf{130}
  (2004), no.~1-3, 3 -- 31, Papers presented at the 2002 IEEE Symposium on
  Logic in Computer Science (LICS).

\bibitem{gajarsky2012faster}
J.~Gajarsk{\'y} and P.~Hlin{\v e}n\'y, \emph{Faster deciding {MSO} properties
  of trees of fixed height, and some consequences}, LIPIcs-Leibniz
  International Proceedings in Informatics, vol.~18, Schloss
  Dagstuhl-Leibniz-Zentrum fuer Informatik, 2012.

\bibitem{Gajarsky2012}
\bysame, \emph{Kernelizing mso properties of trees of fixed height, and some
  consequences}, arXiv:1204.5194 [cs.DM], 2012, (revised Jan 2014).

\bibitem{Ganian2012}
R.~Ganian, P.~Hlin\v{e}n\'y, J.~Ne{\v s}et{\v r}il, J.~Obdr\v{z}\'alek,
  P.~Ossona~de Mendez, and R.~Ramadurai, \emph{When trees grow low: Shrubs and
  fast $\mathrm{MSO}_1$}, MFCS 2012, Lecture Notes in Computer Science, vol.
  7464, Springer-Verlag, 2012, pp.~419--430.

\bibitem{Giannopoulou2011}
A.C. Giannopoulou, P.~Hunter, and D.M. Thilikos, \emph{{LIFO}-search: A min-max
  theorem and a searching game for cycle-rank and tree-depth}, Discrete Appl.
  Math. (2011), submited.

\bibitem{Golumbic1978105}
M.~Golumbic, \emph{Trivially perfect graphs}, Discrete Mathematics \textbf{24}
  (1978), no.~1, 105--107.

\bibitem{Grohe2013}
M.~Grohe, S.~Kreutzer, and S.~Siebertz, \emph{Deciding first-order properties
  of nowhere dense graphs}, arXiv:1311.3899 [cs.LO], November 2013.

\bibitem{Gruber2008}
H.~Gruber and M.~Holzer, \emph{Finite automata, digraph connectivity, and
  regular expression size}, Automata, Languages and Programming (L.~Aceto,
  I.~Damg{\r a}rd, L.A. Goldberg, M.M. Halld\'orsson, A.~Ing\'olfsdóttir, and
  I.~Walukiewicz, eds.), Lecture Notes in Computer Science, vol. 5126, Springer
  Berlin Heidelberg, 2008, pp.~39--50.

\bibitem{Heggernes2009}
P.~Heggernes and P.~Golovach, \emph{Choosability of $p_5$-free graphs},
  Mathematical Foundations of Computer Science, Lecture Notes in Computer
  Science, vol. 5734, Springer-Verlag, 2009, pp.~382--391.

\bibitem{HN}
P.~Hell and J.~Ne{\v s}et{\v r}il, \emph{Graphs and homomorphisms}, Oxford
  Lecture Series in Mathematics and its Applications, vol.~28, Oxford
  University Press, 2004.

\bibitem{Hunter2011}
P.~Hunter, \emph{{LIFO}-search on digraphs: A searching game for cycle-rank},
  Fundamentals of Computation Theory (O.~Owe, M.~Steffen, and J.-A. Telle,
  eds.), Lecture Notes in Computer Science, vol. 6914, Springer Berlin
  Heidelberg, 2011, pp.~217--228.

\bibitem{jensen2011graph}
T.R. Jensen and B.~Toft, \emph{Graph coloring problems}, vol.~39, John Wiley \&
  Sons, 2011.

\bibitem{Jiang}
T.~Jiang, \emph{Compact topological minors in graphs}, Journal of Graph Theory
  (2010), published online.

\bibitem{Kazana2013}
W.~Kazana and L.~Segoufin, \emph{Enumeration of first-order queries on classes
  of structures with bounded expansion}, Proceedings of the 16th International
  Conference on Database Theory, 2013, pp.~10--20.

\bibitem{Kierstead2003}
H.A. Kierstead and D.~Yang, \emph{Orderings on graphs and game coloring
  number}, Order \textbf{20} (2003), 255--264.

\bibitem{K`uhn2004}
D.~K\"uhn and D.~Osthus, \emph{Every graph of sufficiently large average degree
  contains a {$C_4$}-free subgraph of large average degree}, Combinatorica
  \textbf{24} (2004), no.~1, 155--162.

\bibitem{Lampis2013}
M.~Lampis, \emph{Model checking lower bounds for simple graphs}, Automata,
  Languages, and Programming (F.V. Fomin, R.~Freivalds, M.~Kwiatkowska, and
  D.~Peleg, eds.), Lecture Notes in Computer Science, vol. 7965, Springer
  Berlin Heidelberg, 2013, pp.~673--683 (English).

\bibitem{Nas}
R.~Naserasr, \emph{Homomorphisms and edge-coloring of planar graphs}, Journal
  of Combinatorial Theory, Series B \textbf{97} (2007), no.~3, 394--400.

\bibitem{Naserasr20095789}
R.~Naserasr, Y.~Nigussie, and R.~{\v S}krekovski, \emph{Homomorphisms of
  triangle-free graphs without a {$K_5$}-minor}, Discrete Mathematics
  \textbf{309} (2009), no.~18, 5789 -- 5798, Combinatorics 2006, A Meeting in
  Celebration of Pavol Hell's 60th Birthday (May 1-5, 2006).

\bibitem{Taxi_jcolor}
J.~Ne{\v s}et{\v r}il and P.~Ossona~de Mendez, \emph{Colorings and
  homomorphisms of minor closed classes}, The Goodman-Pollack Festschrift
  (B.~Aronov, S.~Basu, J.~Pach, and M.~Sharir, eds.), Algorithms and
  Combinatorics, vol.~25, {D}iscrete {\&} {C}omputational {G}eometry, 2003,
  pp.~651--664.

\bibitem{Taxi_stoc06}
\bysame, \emph{Linear time low tree-width partitions and algorithmic
  consequences}, STOC'06. Proceedings of the 38th Annual {ACM} Symposium on
  Theory of Computing, {ACM} Press, 2006, pp.~391--400.

\bibitem{Taxi_tdepth}
\bysame, \emph{Tree depth, subgraph coloring and homomorphism bounds}, European
  Journal of Combinatorics \textbf{27} (2006), no.~6, 1022--1041.

\bibitem{POMNI}
\bysame, \emph{Grad and classes with bounded expansion {I}. decompositions},
  European Journal of Combinatorics \textbf{29} (2008), no.~3, 760--776.

\bibitem{POMNII}
\bysame, \emph{Grad and classes with bounded expansion {II}. algorithmic
  aspects}, European Journal of Combinatorics \textbf{29} (2008), no.~3,
  777--791.

\bibitem{POMNIII}
\bysame, \emph{Grad and classes with bounded expansion {III}. restricted graph
  homomorphism dualities}, European Journal of Combinatorics \textbf{29}
  (2008), no.~4, 1012--1024.

\bibitem{Taxi_hom}
\bysame, \emph{How many {F}'s are there in {G}?}, European Journal of
  Combinatorics \textbf{32} (2011), no.~7, 1126--1141.

\bibitem{ND_characterization}
\bysame, \emph{On nowhere dense graphs}, European Journal of Combinatorics
  \textbf{32} (2011), no.~4, 600--617.

\bibitem{Sparsity}
\bysame, \emph{Sparsity (graphs, structures, and algorithms)}, Algorithms and
  Combinatorics, vol.~28, Springer, 2012, 465 pages.

\bibitem{FO_CSP}
\bysame, \emph{On first-order definable colorings}, Geometry, Structure and
  Randomness in Combinatorics (J.~Matousek and M.~Pellegrini, eds.),
  Publications of the Scuola Normale Superiore, CRM Series, Springer, 2014,
  accepted.

\bibitem{NP85}
J.~Ne{\v s}et{\v r}il and S.~Poljak, \emph{Complexity of the subgraph problem},
  Comment. Math. Univ. Carolin. \textbf{26.2} (1985), 415--420.

\bibitem{Nev2003}
J.~Ne{\v s}et{\v r}il and S.~Shelah, \emph{On the order of countable graphs},
  European J. Combin. \textbf{24} (2003), no.~6, 649--663.

\bibitem{plehn91}
J.~Plehn and B.~Voigt, \emph{Finding minimally weighted subgraphs}, Proc. 16th
  Int. Workshop Graph-Theoretic Concepts in Computer Science (Springer-Verlag,
  ed.), Lecture Notes in Computer Science, no. 484, 1991, pp.~18--29.

\bibitem{shallow}
S.~Plotkin, S.~Rao, and W.D. Smith, \emph{Shallow excluded minors and improved
  graph decomposition}, 5th Symp. Discrete Algorithms, SIAM, 1994,
  pp.~462--470.

\bibitem{Rossman2007}
B.~Rossman, \emph{Homomorphism preservation theorems}, J. ACM \textbf{55}
  (2008), no.~3, 1--53.

\bibitem{Sampathkumar1977}
E.~Sampathkumar, \emph{{$D_n$}-colorings of planar graphs}, Graph Theory
  Newsletter \textbf{6} (1977), no.~3, 2, abstract.

\bibitem{Schaffer}
A.A. Sch{\"a}ffer, \emph{Optimal node ranking of trees in linear time}, Inform.
  Process. Lett. \textbf{33} (1989/90), 91--96.

\bibitem{Seymour1993}
P.D Seymour and R.~Thomas, \emph{Graph searching and a min-max theorem for
  tree-width}, J. Combin. Theory Ser. B \textbf{58} (1993), 22--33.

\bibitem{Thomassen199411}
C.~Thomassen, \emph{Five-coloring graphs on the torus}, Journal of
  Combinatorial Theory, Series B \textbf{62} (1994), no.~1, 11 -- 33.

\bibitem{vizing}
V.G. Vizing, \emph{Coloring the vertices of a graph in prescribed colors},
  Metody Diskretnogo Analiza v Teorii Kodov i Schem \textbf{29} (1976), 3--10,
  (In russian).

\bibitem{td-wiki}
W{ikipedia contributors}, \emph{Tree-depth}, Wikipedia, The Free Encyclopedia,
  accessed November 8, 2014.

\bibitem{Zhu2008}
X.~Zhu, \emph{Colouring graphs with bounded generalized colouring number},
  Discrete Math. \textbf{309} (2009), no.~18, 5562--5568.

\end{thebibliography}
\end{document}